\newtheorem{theorem}{Theorem}[section]
\newtheorem{proposition}[theorem]{Proposition}
\newtheorem{corollary}[theorem]{Corollary}
\newtheorem{remark}[theorem]{Remark}
\numberwithin{equation}{section}
\def\sD{{\mathfrak D}}      
   \def\sH{{\mathfrak H}}   
   \def\sK{{\mathfrak K}}   \def\sL{{\mathfrak L}}
\def\sM{{\mathfrak M}}   \def\sN{{\mathfrak N}}
      \def\dC{{\mathbb C}}
\def\dD{{\mathbb D}}
   \def\dN{{\mathbb N}}   
   \def\dT{{\mathbb T}}
\def\cA{{\mathcal A}}   \def\cB{{\mathcal B}}   \def\cC{{\mathcal C}}
\def\cD{{\mathcal D}}   \def\cE{{\mathcal E}}   \def\cF{{\mathcal F}}
\def\cG{{\mathcal G}}   \def\cH{{\mathcal H}}   
   \def\cK{{\mathcal K}}   \def\cL{{\mathcal L}}
\def\cM{{\mathcal M}}      
   \def\cQ{{\mathcal Q}}   
\def\cS{{\mathcal S}}   \def\cT{{\mathcal T}}   \def\cU{{\mathcal U}}
\def\cV{{\mathcal V}}   \def\cW{{\mathcal W}}   
   \def\cZ{{\mathcal Z}}
\def\bJ{{\mathbf J}}
\def\bL{{\mathbf L}}
\def\bS{{\mathbf S}}
\def\wt{\widetilde}
\def\wh{\widehat}
\def\f{\varphi}
\def\uphar{{\upharpoonright\,}}
\def\ran{{\rm ran\,}}
\def\cran{{\rm \overline{ran}\,}}
\begin{document}
\title
 {The Schur problem and block operator CMV matrices}

\author{
Yury~Arlinski\u{i}}
\address{Department of Mathematical Analysis \\
East Ukrainian National University \\
Kvartal Molodyozhny 20-A \\
Lugansk 91034 \\
Ukraine} \email{yury.arlinskii@gmail.com}
 \subjclass[2010]
 {47A48, 47A56, 47A57, 47A64, 47B35, 47N70, 30E05}

\keywords{Contraction, Schur class functions, Schur problem, Schur
parameters, Toeplitz matrix, Kre\u{\i}n shorted operator,
conservative system, transfer function, block operator CMV matrices,
truncated block operator CMV matrices }

\thispagestyle{empty}

\begin{abstract}
The CMV matrices and their sub-matrices are applied to the
description of all solutions to the Schur interpolation problem for
contractive analytic operator-valued functions in the unit disk (the
Schur class functions).
\end{abstract}
\maketitle
\tableofcontents

\section{Introduction}
In what follows the class of all continuous linear operators defined
on a complex Hilbert space $\sH_1$ and taking values in a complex
Hilbert space $\sH_2$ is denoted by $\bL(\sH_1,\sH_2)$ and
${\bL}(\sH):= {\bL}(\sH,\sH)$. All infinite dimensional Hilbert
spaces are supposed to be separable. We denote by $I_H$ the identity
operator in a Hilbert space  $H$ and omit the symbol $H$ in these
notations if there is no danger of confusion; by $P_\cL$ the
orthogonal projection onto the subspace (the closed linear manifold)
$\cL$. The notation $T\uphar \cL$ means the restriction of a linear
operator $T$ on the set $\cL$. The range and the null-space of a
linear operator $T$ are denoted by $\ran T$ and $\ker T$,
respectively. We use the standart symbols $\dC$, $\dN$, and $\dN_0$
for the sets of complex numbers, positive integers, and nonnegative
integers, respectively. An operator $T\in\bL(\sH_1,\sH_2)$ is said
to be
\begin{itemize}
\item \textit{contractive} if $\|T\|\le 1$;

\item \textit{isometric} if $\|Tf\|=\|f\|$ for all $f\in \sH_1$
$\iff T^*T=I_{\sH_1}$;

\item \textit{co-isometric} if $T^*$ is isometric $\iff
TT^*=I_{\sH_2}$;
\item \textit{unitary} if it is both isometric and co-isometric.
\end{itemize}
Given a contraction $T\in \bL(\sH_1,\sH_2)$, the operators $
D_T:=(I_{\sH_1}-T^*T)^{1/2}$ and $D_{T^*}:=(I_{\sH_2}-TT^*)^{1/2} $
are called the \textit{defect operators} of $T$, and the subspaces
$\sD_T=\cran D_T,$ $\sD_{T^*}=\cran D_{T^*}$ the \textit{defect
subspaces} of $T$. %The dimensions $\dim\sD_T,$ $\dim\sD_{T^*}$ are known as the \textit{defect numbers} of $T$.
The defect operators satisfy the relations $ TD_{T}=D_{T^*}T,$
$T^*D_{T^*}=D_{T}T^*.$
%\subsection{The Schur class functions and corresponding lower triangular Toeplitz matrices}
Let $\sM$ and $\sN$ be Hilbert spaces. The \textit{Schur class}
${\bf S}(\sM,\sN)$ is the set of all functions $\Theta(z)$  with
values in $\bL(\sM,\sN)$, holomorphic in the unit disk
$$
\dD=\{z\in\dC:|z|<1\}$$
 and such that
$\|\Theta(z)\|\le 1$ for all $z\in\dD$. Let $\Theta$ be holomorphic
in $\dD$ operator valued function acting between Hilbert spaces
$\sM$ and $\sN$ and let
\[
\Theta(z)=\sum\limits_{n=0}^{\infty}z^n C_n,\; z\in\dD,\;
C_n\in\bL(\sM,\sN),\; n\ge 0
\]
be the Taylor expansion of $\Theta$. Consider the lower triangular
(analytic) Toeplitz matrix
\[%\begin{equation}\label{toep}
T_\Theta:=\begin{bmatrix}C_0&0&0&0&\ldots&\ldots\cr
C_1&C_0&0&0&\ldots&\ldots\cr C_2&C_1&C_0&0&0&\ldots\cr
C_3&C_2&C_1&C_0&0&\ldots\cr
\vdots&\vdots&\vdots&\vdots&\vdots&\vdots%\cr
%C_n&C_{n-1}&C_{n-2}&C_{n-3}&\ldots\cr
%\vdots&\vdots&\vdots&\vdots&\vdots
\end{bmatrix}.
\]%\end{equation}
%We shall not distinguish between matrices and corresponding linear operators.
Let $\cH$ be a separable Hilbert space and let $\ell_2(\cH)$ be a
Hilbert space of all vectors $\vec f=[f_0, f_1,\ldots]^T,$ $f_k
\in\cH,$ $k\in\dN_0$,
%\[
$\sum\limits_{k=0}^\infty ||f_k||^2_\cH<\infty$, with the
inner product $(\vec f,\vec g)=\sum\limits_{k=0}^\infty (f_k,
g_k)_\cH.$ As is well known \cite{BC, FoFr}
\[
\Theta\in\bS(\sM,\sN)\iff
T_\Theta\in\bL\left(\ell_2(\sM),\ell_2(\sN)\right)\quad\mbox{is a
contraction}.
\]
Set for $n\in\dN_0$
\[\begin{array}{l}
\sM^{n+1}=\underbrace{\sM\oplus\sM\oplus\cdots\oplus\sM}_{n+1},\; %\\
\sN^{n+1}=\underbrace{\sN\oplus\sN\oplus\cdots\oplus\sN}_{n+1}.\\
\end{array}
\]
Clearly, if $T_\Theta$ is a contraction, then the operator
$T_{\Theta,n}\in \bL\left(\sM^{n+1},\sN^{n+1}\right)$ given by the
block operator matrix
\[%\begin{equation}
%\label{toepn}
T_{\Theta,n}:=\begin{bmatrix}C_0&0&0&\ldots&0\cr
C_1&C_0&0&\ldots&0\cr \vdots&\vdots&\vdots&\vdots&\vdots\cr
C_n&C_{n-1}&C_{n-2}&\ldots&C_0\end{bmatrix}
\]%\end{equation}
is a contraction  for each $n$.

%\subsubsection{The Schur algorithm}

%\subsection{The Schur problem and the Schur algorithm}
The following interpolation problem is called the \textit{Schur
problem}:
\begin{equation}
\label{SCHPRB}
\begin{array}{l}
 \mbox{Let}\;\sM\;\mbox{and}\;\sN\;\mbox{be Hilbert
spaces}.\\
\mbox{Given operators}\; C_k\in\bL(\sM,\sN),\; k=0,1,\ldots,
N,\\
\mbox{does there exist}\; \Theta\in\bS(\sM,\sN)\;\mbox{such that}\;
C_0,C_1,\ldots, C_N\;\mbox{are the first}\\
 \mbox{Taylor
coefficients of}\; \Theta,\, i.e.,\; \cfrac{\Theta^{(k)}(0)}{k!}=C_k
\quad\mbox{for}\quad k=0,1,\ldots,
N?\\
\mbox{If such functions exist, describe all of them}.
\end{array}
\end{equation}
 The Schur
problem is often called the Carath\'eodory or the
Carath\'eodory-Fej\'{e}r problem. This problem
%\eqref{SCHPRB}
was posed and solved by I.~Schur in \cite{Schur} for scalar case
($\sM=\sN=\dC$) and later this and other interpolation problems for
matrix and operator cases attracted attention of many authors, which
used various methods to solve them, see \cite{Alpay, ArovKrein, BC,
Const2, DFK, Dym, FoFr, FoFrGoKa, FrKirLas, Kh3, Kh2, Kh, Peller} and
references therein. It is proved in \cite{Schur} for $\sM=\sN=\dC$
that solutions of the interpolation problem \eqref{SCHPRB} exist if
and only if the lower triangular Toeplitz matrix
\begin{equation}\label{toepn}
T_N:=\begin{bmatrix}C_0&0&0&\ldots&0\cr C_1&C_0&0&\ldots&0\cr
\vdots&\vdots&\vdots&\vdots&\vdots\cr
C_N&C_{N-1}&C_{N-2}&\ldots&C_0\end{bmatrix}
\end{equation}
is a contraction in $\dC^{N+1}$ with the standard inner product,
i.e., $I-T^*_NT_N\ge 0$. The uniqueness holds if and only if
$$\det(I-T^*_NT_N)=0.$$
In the non-uniqueness case
 a descriptions of all
solutions in \cite{Schur} is given in the form of fractional-linear
transformation
\begin{equation}
\label{Schsol}
\Theta(z)=\cfrac{e_N(z)\cE(z)+f_{N}(z)}{g_{N}(z)\cE(z)+h_{N}(z)},
\end{equation}
were $\cE$ is an arbitrary scalar Schur class function and
$$e_N(z),\; f_N(z), \;g_N(z),\; h_N(z)$$
are polynomials. The approach proposed by Schur is based on the
transformation
\[
 \bS\ni f\mapsto
\f(z):=\frac{f(z)-f(0)}{z(1-\overline{f(0)}f(z))}\in\bS,\; z\in\dD.
\]
The successive application of this transform to $f\in\bS$
\[ f_0(z)=f(z),\ldots,
f_{k+1}(z)=\frac{f_k(z)-f_k(0)}{z(1-\overline{f_k(0)}f_k(z))},\ldots
\]
is called nowadays the \textit{Schur algorithm}. If $f(z)$ is not a
finite Blaschke product, then $\{f_k\}$ is an infinite sequence of
Schur functions called the \textit{associated functions} and none of
them is a finite Blaschke product. The numbers $\gamma_k:=f_k(0)$
are called the {\it Schur parameters}. Note that
\[
f_k(z)=\frac{\gamma_k+z f_{k+1}(z)}{1+\bar\gamma_kz
f_{k+1}(z)}=\gamma_k+(1-|\gamma_k|^2)\frac{z
f_{k+1}(z)}{1+\bar\gamma_kz f_{k+1}(z)},\; k\in\dN_0.
\]
In the case when
\[f(z)=e^{i\f}\prod_{k=1}^l \frac{z-z_k}{1-\bar z_k z}
\]
is a finite Blaschke product of order $l$, the Schur algorithm
terminates at the $l$-th step, i.e., the sequence of Schur
parameters $\{\gamma_n\}_{k=0}^l$ is finite, $|\gamma_k|<1$ for
$n=0,1,\ldots,l-1$, and $|\gamma_l|=1$. Schur proved that $f$ can be
uniquely recovered from $\{\gamma_k\}$, i.e., there is-one-to one
correspondence between Schur class functions and their Schur
parameters.

 The coefficient matrix in \eqref{Schsol}
\[%\begin{equation}\label{coefmatr}
W_N(z)=\begin{bmatrix} e_N(z)&
g_N(z)\cr f_N(z)&h_N(z)
\end{bmatrix}
\]%\end{equation}
can be calculated inductively ($N+1$ steps) by means of Schur
parameters
$$\gamma_0,\gamma_1,\ldots\gamma_N,$$
obtained by the Schur algorithm which starts with
$$f_0(z)=C_0+C_1z+\cdots+C_N z^N.$$
In the case of uniqueness the solution $\Theta$ can be obtained as
follows \cite{Schur}:
\begin{itemize}
\item find $m\in\dN_0$, $m\le N$, such that
$$\det(I-T^*_{m-1}T_{m-1})\ne 0\quad\mbox{and}\quad\det(I-T^*_{m}T_{m})= 0,$$
\item calculate
\[
\Theta_{m-1}(z)=\frac{\gamma_{m-1}+z \gamma_{m}}{1+z\bar\gamma_{m-1}
\gamma_{m}},\ldots, \Theta_0=\frac{\gamma_{0}+z
\Theta_{1}(z)}{1+z\bar\gamma_0 \Theta_{1}(z)},\; z\in\dD
\]
\item the function $\Theta(z)=\Theta_0(z),\; z\in\dD$ is the solution.
\end{itemize}
For matrix and operator cases parameterizations similar to
\eqref{Schsol}
  can be found in \cite{Arov1995, BC, DFK, FrKirLas, FoFr,
FoFrGoKa}. As has been mentioned before, different methods were
used. \textit{The existence criteria in the operator case is similar
to the scalar one: the Toeplitz block operator matrix  $T_N$ given
by \eqref{toepn} is a contraction, acting form $\sM^{N+1}$ into
$\sN^{N+1}.$}

 In the present
paper a new approach to a parametrization of all solutions to the
Schur problem for operator valued functions is suggested. We
essentially use the properties of the block operator CMV
(Cantero--Moral--Vel\'azquez) matrices \cite{ArlMFAT2009} and their
sub-matrices. The scalar CMV matrices ($\sM=\sN=\dC$) appeared in
the theory of scalar orthogonal polynomials on the unit circle
$\dT=\{z\in\dC:|z|=1\}$ \cite{CMV1}, \cite{Si1}. In
\cite{ArlMFAT2009} the block operator CMV matrices were defined,
studied, and applied to the theory of conservative discrete
time-invariant linear systems and to the dilation theory. Block
operator CMV matrices are built by means of an arbitrary choice
sequence and, in particular, by means of the Schur parameters of an
arbitrary $\Theta\in\bS(\sM,\sN)$. It is established in
\cite{ArlMFAT2009} that the simple conservative systems associated
with block operator CMV matrix are realizations of given function
from $\bS(\sM,\sN)$, while the truncated block operator CMV matrices
are the models of completely non-unitary contractions
\cite{ArlMFAT2009}, \cite{AGT}. We will use finite
sub--CMV--matrices which take three-diagonal block operator form and
which are constructed by means of the finite choice sequences
associated with the data of solvable Schur problems. It should be
mentioned that in our paper \cite{MathNach_2012} the similar
approach, using three-diagonal matrices and the corresponding
resolvent formulas, has been applied to the descriptions of all
solutions to the operator truncated Hamburger moment problem.

The paper is organized as follows. In Section \ref{pre} we describe
the Schur algorithm for operator-valued Schur class functions and
the relationships between the Schur parameters, the Taylor
coefficients, the Kre\u\i n shorted operators, and lower triangular
Toeplitz matrices. In Section \ref{cmv} we recall (see
\cite{ArlMFAT2009}) the constructions of block operator CMV and
truncated CMV matrices for a choice sequence $\{\Gamma_n\}$ in the
case $\sD_{\Gamma_k}\ne\{0\}$, $\sD_{\Gamma^*_k}\ne\{0\}$ for all
$k.$ In Appendix A we present the explicit form of the block
operator CMV and truncated CMV matrices in the cases when $\Gamma_m$
is isometry, co-isometry, unitary for some $m$. The core of the
paper is Section \ref{svyasi} where we derive useful resolvent
formulas (Proposition \ref{resform} and Theorem \ref{formula1}) and,
using the three-diagonal finite sub-matrices of truncated CMV
matrices, we establish explicit connections between the function
$\Theta\in\bS(\sM,\sN)$ and the functions $\Theta_k$ associated with
$\Theta$ in accordance with the Schur algorithm (Theorems
\ref{desr}, \ref{connect21}). In Section \ref{reshen} we apply
results obtained in Section \ref{svyasi} to the parametrization of
all solutions to the Schur problem.

\section{Preliminaries}\label{pre}
\subsection{The Schur algorithm for operator-valued functions}
 The Schur algorithm for matrix and operator valued Schur class functions  has
been considered in \cite{DGK, DFK,BC, Const, Const2, FoFr}. It is
based on the following theorem which goes back to \cite{Shmul1,
Shmul2}:
\begin{theorem}
%\label{MO}
Let $\sM$ and $\sN$ be Hilbert spaces and let the function
$\Theta(z)$ be from the Schur class ${\bf S}(\sM,\sN).$ Then there
exists a function $\cZ(z)$ from the Schur class ${\bf
S}(\sD_{\Theta(0)},\sD_{\Theta^*(0)})$ such that
\begin{equation}
\label{MREP}
\Theta(z)=\Theta(0)+D_{\Theta^*(0)}\cZ(z)(I+\Theta^*(0)\cZ(z))^{-1}D_{\Theta(0)},\;z\in\dD.
\end{equation}
\end{theorem}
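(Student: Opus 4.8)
The plan is to prove the representation \eqref{MREP} by direct verification, in two stages: first construct the candidate function $\cZ$ and check it is well-defined and contractive, then check that the right-hand side of \eqref{MREP} reproduces $\Theta$. Throughout write $C_0=\Theta(0)$, $D=D_{C_0}=(I-C_0^*C_0)^{1/2}\in\bL(\sM)$, $D_*=D_{C_0^*}=(I-C_0C_0^*)^{1/2}\in\bL(\sN)$, and recall the intertwining relations $C_0D=D_*C_0$, $C_0^*D_*=DC_0^*$. One first observes that $I+C_0^*\cZ(z)$ is invertible for every $z\in\dD$: since $\|\Theta(0)\|\le 1$ and $\|\cZ(z)\|\le 1$ with $\cZ(z)$ mapping into $\sD_{\Theta^*(0)}$, a standard argument gives $\|C_0^*\cZ(z)\|<1$ unless $C_0$ is an isometry on the relevant subspace, and in the remaining degenerate directions the relation $C_0^*D_*=DC_0^*$ shows $C_0^*$ annihilates the range of $\cZ(z)$; hence $I+C_0^*\cZ(z)$ is boundedly invertible and the formula makes sense.

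Next I would define, for a given $\Theta\in\bS(\sM,\sN)$, the prospective function $\cZ$ by solving \eqref{MREP} for it. Set $\Psi(z):=\Theta(z)-C_0$, so $\Psi$ is holomorphic in $\dD$ with $\Psi(0)=0$; by the Schwarz-type lemma for the Schur class, $z^{-1}\Psi(z)$ is again a well-behaved bounded holomorphic function. Rearranging \eqref{MREP} formally gives
\begin{equation}\label{Zdef}
\cZ(z)=D_*^{[-1]}\,\Psi(z)\bigl(D+\Theta^*(0)D_*^{[-1]}\Psi(z)\bigr)^{[-1]}D,
\end{equation}
where $D_*^{[-1]}$, $D^{[-1]}$ denote the (possibly unbounded) Moore--Penrose-type inverses on the defect subspaces; the key point is that $\Psi(z)$ maps into $\sD_{\Theta^*(0)}=\ran D_*$ and its adjoint structure makes the composition with $D_*^{[-1]}$ bounded. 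This is the point where one must be careful: the cleanest route is to avoid inverses of $D,D_*$ altogether and instead define $\cZ$ implicitly, then verify contractivity using the factorization of $I-\Theta(z)^*\Theta(z)$ in terms of $I-\cZ(z)^*\cZ(z)$. Concretely, from \eqref{MREP} a direct computation of $I-\Theta(z)^*\Theta(z)$, using $C_0D=D_*C_0$ and writing $F(z):=(I+\Theta^*(0)\cZ(z))^{-1}D$, should yield an identity of the form $I-\Theta(z)^*\Theta(z)=F(z)^*\bigl(I-\cZ(z)^*\cZ(z)\bigr)F(z)$, from which $\|\Theta(z)\|\le 1$ for all $z$ is equivalent to $\|\cZ(z)\|\le 1$ for all $z$.

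Assembling the argument: (i) show the algebraic identity \eqref{MREP} $\Leftrightarrow$ \eqref{Zdef} holds as an identity of formal power series / holomorphic functions once all terms are shown bounded; (ii) show $\cZ$ defined by \eqref{Zdef} is holomorphic in $\dD$ and maps $\sD_{\Theta(0)}$ into $\sD_{\Theta^*(0)}$, using that $\ran\Psi(z)\subseteq\ran D_*$ (which itself follows from $\|\Theta(z)\|\le 1$ by a range-inclusion lemma of the Douglas type); (iii) derive the factorization identity for $I-\Theta^*\Theta$ and conclude $\cZ\in\bS(\sD_{\Theta(0)},\sD_{\Theta^*(0)})$. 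The main obstacle I anticipate is handling the non-invertibility of the defect operators $D$ and $D_*$ when $\Theta(0)$ is not a strict contraction: one has to show that $\Psi(z)$ and its compositions genuinely live on the closed ranges where the partial inverses act boundedly, and that $(D+\Theta^*(0)D_*^{[-1]}\Psi(z))$ is invertible as an operator of $\sD_{\Theta(0)}$ — this is exactly the kind of bookkeeping that the intertwining relations $C_0D=D_*C_0$, $DC_0^*=C_0^*D_*$ are designed to make tractable, and where the original arguments of Shmul'yan should be followed.
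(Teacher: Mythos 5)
First, a remark on scope: the paper does not prove this theorem at all — it is imported from Shmul'yan (\cite{Shmul1,Shmul2}) as the known foundation of the operator Schur algorithm — so there is no in-paper argument to compare yours against and your proposal must stand on its own. Its overall architecture (solve \eqref{MREP} for $\cZ$, then transfer contractivity through a factorization of $I-\Theta(z)^*\Theta(z)$) is the right one, and your steps (i) and (iii) are essentially routine \emph{once} $\cZ(z)$ is known to exist as a bounded operator from $\sD_{\Theta(0)}$ to $\sD_{\Theta^*(0)}$.

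The genuine gap is in step (ii). You assert that $\ran\bigl(\Theta(z)-\Theta(0)\bigr)\subseteq\ran D_{\Theta^*(0)}$ ``follows from $\|\Theta(z)\|\le 1$ by a range-inclusion lemma of the Douglas type.'' It does not. Douglas's lemma needs the operator inequality $(\Theta(z)-C_0)(\Theta(z)-C_0)^*\le c\,D_{C_0^*}^2$ (and you in fact need the \emph{two-sided} factorization $\Theta(z)-C_0=D_{C_0^*}(\cdot)D_{C_0}$, since the right-hand side of \eqref{MREP} ends in $D_{\Theta(0)}$), and for two arbitrary contractions $A=\Theta(0)$, $B=\Theta(z)$ no such inequality holds. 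What makes it true here is the analyticity of $\Theta$ on all of $\dD$ — equivalently the positivity of the kernel $\bigl(I-\Theta(z)\Theta(w)^*\bigr)/(1-z\bar w)$, or the contractivity of the lower-triangular Toeplitz operator $T_\Theta$, which is exactly what \eqref{ConstForm} encodes: every Taylor coefficient $C_n$, $n\ge1$, factors as $D_{\Gamma_0^*}(\cdots)D_{\Gamma_0}$ with norm control. Without importing this (or Shmul'yan's equivalent lemma), your defining formula for $\cZ$ is not known to yield a bounded operator, let alone a contraction; this is where the real content of the theorem lives, and your sketch passes over it. A secondary flaw: your argument for the invertibility of $I+\Theta^*(0)\cZ(z)$ (``$C_0^*$ annihilates the range of $\cZ(z)$ in the degenerate directions'') is not correct as stated; the clean reason is that the constructed $\cZ$ satisfies $\cZ(0)=0$, so the operator Schwarz lemma gives $\|\cZ(z)\|\le|z|<1$ and hence $\|\Theta^*(0)\cZ(z)\|<1$, after which the Neumann series applies.
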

The representation \eqref{MREP} of a function $\Theta(z)$ from the
Schur class is called \textit{the M\"obius representation} of
$\Theta(z)$ and the function $\cZ(z)$ is called \textit{the M\"obius
parameter} of $\Theta(z)$. Clearly, $\cZ(0)=0$ and from Schwartz's
lemma one obtains that
\[
z^{-1}\cZ(z)\in\bS(\sD_{\Theta(0)},\sD_{\Theta^*(0)}).
\]
\textbf{The operator Schur's algorithm \cite{BC}}. For
$\Theta\in{\bf S}(\sM,\sN)$ put $\Theta_0(z)=\Theta(z)$ and let
$\cZ_0(z)$ be the M\"obius parameter of $\Theta$. Define
\[
\Gamma_0=\Theta(0),\; \Theta_1(z)=z^{-1}\cZ_0(z)\in {\bf
S}(\sD_{\Gamma_0},\sD_{\Gamma^*_0}),\;\Gamma_1=
\Theta_1(0)=\cZ'_0(0).
\]
If $\Theta_0(z),\ldots,\Theta_k(z)$ and $\Gamma_0,\ldots, \Gamma_k$
already defined, then let $\cZ_{k+1}\in {\bf
S}(\sD_{\Gamma_k},\sD_{\Gamma^*_k})$ be the M\"obius parameter of
$\Theta_k$. Put
\[
\Theta_{k+1}(z)=z^{-1}\cZ_{k+1}(z),\; \Gamma_{k+1}=\Theta_{k+1}(0).
\]
 The contractions $\Gamma_0\in\bL(\sM,\sN),$
$\Gamma_k\in\bL(\sD_{\Gamma_{k-1}},\sD_{\Gamma^*_{k-1}})$,
$k=1,2,\ldots$ are called the \textit{Schur parameters} of $\Theta$
and the function $\Theta_k \in {\bf
S}(\sD_{\Gamma_{k-1}},\sD_{\Gamma^*_{k-1}})$ is called the $k-th$
\textit{associated function}. Thus,
\[
\begin{array}{l}
\Theta_k(z)=\Gamma_k+z
D_{\Gamma^*_k}\Theta_{k+1}(z)(I+z\Gamma^*_k\Theta_{k+1}(z))^{-1}D_{\Gamma_k}\\
=\Gamma_k+z
D_{\Gamma^*_k}(I+z\Theta_{k+1}(z)\Gamma^*_k)^{-1}\Theta_{k+1}(z)D_{\Gamma_k},
\;z\in\dD,
\end{array}
\]
and
\[%\begin{array}{l}
\Theta_{k+1}(z)\uphar\ran
D_{\Gamma_k}=z^{-1}D_{\Gamma^*_k}(I-\Theta_k(z)\Gamma^*_k)^{-1}
(\Theta_k(z)-\Gamma_k)D^{-1}_{\Gamma_k}\uphar\ran D_{\Gamma_k}.
\]
Clearly, the sequence of Schur parameters $\{\Gamma_k\}$ is infinite
if and only if the operators $\Gamma_k$ are non-unitary. The
sequence of Schur parameters consists of finite number of operators
$\Gamma_0,$ $\Gamma_1,\ldots, \Gamma_N$ if and only if
$\Gamma_N\in\bL(\sD_{\Gamma_{N-1}},\sD_{\Gamma^*_{N-1}})$ is
unitary. If $\Gamma_N$ is non-unitary but isometric (respect.,
co-isometric), then $\Gamma_k=0\in\bL(0,\sD_{\Gamma^*_N})$
(respect., $\Gamma_k=0\in\bL(\sD_{\Gamma_N},0)$) for all $k>N$. The
following theorem \cite{BC, Const} is the operator generalization of
Schur's result.
\begin{theorem} \label{SchurAlg} There is a one-to-one
correspondence between the Schur class ${\bf S}(\sM,\sN)$ and the
set of all sequences of contractions $\{\Gamma_k\}_{k\ge 0}$ such
that
\begin{equation}
\label{CHSEQ} \Gamma_0\in\bL(\sM,\sN),\;\Gamma_k\in
\bL(\sD_{\Gamma_{k-1}},\sD_{\Gamma^*_{k-1}}),\; k\ge 1.
\end{equation}
\end{theorem}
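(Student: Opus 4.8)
The plan is to build the two directions of the correspondence separately, using the operator Schur algorithm described above to go from a function to its parameter sequence, and a synthesis procedure to go the other way. For the forward direction, given $\Theta\in\bS(\sM,\sN)$ the algorithm already produces a sequence $\{\Gamma_k\}$ satisfying \eqref{CHSEQ}: one checks inductively that each M\"obius parameter $\cZ_k$ lies in $\bS(\sD_{\Gamma_{k-1}},\sD_{\Gamma^*_{k-1}})$ with $\cZ_k(0)=0$, so by Schwarz's lemma $\Theta_{k+1}(z)=z^{-1}\cZ_k(z)$ is again a Schur class function between the same defect spaces, whence $\Gamma_{k+1}=\Theta_{k+1}(0)$ is a contraction of the required form. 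First I would record this as the definition of the map $\Theta\mapsto\{\Gamma_k\}$, together with the observations (already stated in the excerpt) about when the sequence terminates or becomes eventually zero, so that every $\Theta$ yields a legitimate sequence of the type \eqref{CHSEQ} whether finite or infinite.

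Next I would construct the inverse map. Given a sequence $\{\Gamma_k\}$ satisfying \eqref{CHSEQ}, define functions $\Theta_k$ by downward recursion in the truncated case, or as a limit in the infinite case, via
\[
\Theta_k(z)=\Gamma_k+z\,D_{\Gamma^*_k}\bigl(I+z\,\Theta_{k+1}(z)\Gamma^*_k\bigr)^{-1}\Theta_{k+1}(z)D_{\Gamma_k}.
\]
If the sequence is finite with $\Gamma_N$ unitary (or is extended by zeros past an isometric/co-isometric $\Gamma_N$), set $\Theta_N(z)\equiv\Gamma_N$ (resp.\ $\Theta_k\equiv 0$ past the stopping index) and run the recursion down to $k=0$; the resulting $\Theta_0$ is manifestly holomorphic and, by the M\"obius representation theorem applied in reverse at each step, contractive, hence in $\bS(\sM,\sN)$. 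In the genuinely infinite case, I would truncate at level $n$ by putting $\Theta^{(n)}_n(z)\equiv\Gamma_n$, propagate down to get $\Theta^{(n)}_0$, and show the sequence $\{\Theta^{(n)}_0\}_n$ converges locally uniformly on $\dD$; the key estimate is that altering the function at level $n$ changes $\Theta_0$ only by a term of order $z^{n}$, because each application of the M\"obius step multiplies the "tail discrepancy" by $z$. This Cauchy-in-$z^n$ argument is the standard device and gives a well-defined limit $\Theta\in\bS(\sM,\sN)$ whose first $n$ Taylor coefficients are insensitive to the truncation level.

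Finally I would verify that the two maps are mutually inverse. Running the Schur algorithm on the $\Theta$ just synthesized reproduces $\{\Gamma_k\}$: at each step the M\"obius parameter of $\Theta_k$ is exactly $z\,\Theta_{k+1}(z)$ by construction, so $\Gamma_k=\Theta_k(0)$ comes back unchanged; conversely, synthesizing from the parameters of a given $\Theta$ returns $\Theta$ because the synthesis recursion is literally the M\"obius representation \eqref{MREP} of $\Theta_k$ in terms of $\Gamma_k$ and $\Theta_{k+1}$, which holds by the Shmul'yan–type theorem. The main obstacle is the convergence argument in the infinite case — one must check that the downward recursion is well posed (the operators $I+z\Theta_{k+1}\Gamma^*_k$ are boundedly invertible on $\dD$ since $\|z\Theta_{k+1}\Gamma^*_k\|<1$) and that the truncated approximants form a Cauchy sequence uniformly on compact subsets of $\dD$; handling the degenerate cases where some $\sD_{\Gamma_k}$ or $\sD_{\Gamma^*_k}$ collapses (so that later parameters act on or into the zero space) requires the bookkeeping conventions spelled out in the excerpt, and I would treat those separately but briefly, since there the sequence is effectively finite.
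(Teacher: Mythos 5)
Your outline is correct, but it takes a genuinely different route from the one the paper relies on. The paper gives no proof of Theorem \ref{SchurAlg}: it quotes the result from \cite{BC,Const}, and the mechanism it actually invokes (described immediately after the statement) is algebraic rather than analytic. There, the bijection is obtained at the level of Toeplitz matrices: by the parametrization of contractive block operator matrices, the contractions $T_0,T_1,T_2,\ldots$ determine, level by level, a unique choice sequence $\{\Gamma_k\}$ via \eqref{ConstForm}, giving a one-to-one correspondence between choice sequences and contractive lower triangular Toeplitz operators $T_\infty$ on $\ell_2(\sM)$; combined with the classical equivalence $\Theta\in\bS(\sM,\sN)\iff T_\Theta$ is a contraction, this yields the theorem with no limiting process on functions. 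You instead argue directly with the Schur algorithm, synthesizing $\Theta$ as a locally uniform limit of truncated approximants; your key estimate --- that two Schur functions sharing the first $n+1$ Schur parameters differ by $O(|z|^{n+1})$, hence (being a difference of contractions bounded by $2$) satisfy $\|\Theta(z)-\wh\Theta(z)\|\le 2|z|^{n+1}$ by Schwarz's lemma --- is precisely the fact the paper quotes from \cite[Theorem 2.2]{ArlMFAT2009}, and it does give both the Cauchy property of the approximants and the injectivity needed to see that analysis and synthesis are mutually inverse. The trade-off is that the Toeplitz route delivers the explicit coefficient formulas \eqref{ConstForm} for free and avoids all convergence arguments, while your route is more self-contained and yields the quantitative approximation estimate as a by-product. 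One point worth making explicit in your write-up: in the infinite case, the claim that the synthesized limit has M\"obius parameter $z\Theta_1(z)$ (so that running the algorithm on it recovers $\{\Gamma_k\}$) needs justification, either by passing to the limit in the representation \eqref{MREP} or, more cleanly, by observing that the $n$-th Taylor coefficient of the limit agrees with that of the $n$-th approximant and hence, via \eqref{ConstForm}, determines $\Gamma_n$ uniquely; also note that the contractivity of each downward step uses the surjective half of Shmul'yan's theorem, namely that the fractional-linear map $\cZ\mapsto\Gamma+D_{\Gamma^*}\cZ(I+\Gamma^*\cZ)^{-1}D_\Gamma$ carries contractions to contractions.
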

Notice that a sequence of contractions of the form \eqref{CHSEQ} is
called the \textit{choice sequence} \cite{CF}.
%\subsubsection{The lower triangular Toeplitz matrices}
 There are connections, established in \cite{Const}, between the
Taylor coefficients $\{C_n\}_{n\ge 0}$ and Schur parameters of
$\Theta\in\bS(\sM,\sN)$. These connections are given by the
relations
\begin{equation}
\label{ConstForm}
\begin{array}{l}
C_0=\Gamma_0,\\
 C_n={formula}_n(\Gamma_0,\Gamma_1,\cdots,
\Gamma_{n-1})+\\
\qquad \qquad D_{\Gamma^*_0}D_{\Gamma^*_1}\cdots D_{\Gamma^*_{n-1}}
\Gamma_n D_{\Gamma_{n-1}}\cdots D_{\Gamma_1}D_{\Gamma_0},\; n\ge 1.
\end{array}
\end{equation}
 Here ${formula}_n(\Gamma_0,\Gamma_1,\cdots, \Gamma_{n-1})$
is some expression, depending on $\Gamma_0,\Gamma_1,\cdots,
\Gamma_{n-1}$. Let now $\{C_k\}_{k=0}^\infty$ be a sequence of
operators from $\bL(\sM,\sN)$. Then (\cite[Theorem 2.1]{BC}) there
is a one-to-one correspondence between the set of contractions
\[%begin{equation}
%\label{toep1}
 T_\infty:=\begin{bmatrix}C_0&0&0&0&0&\ldots\cr
C_1&C_0&0&0&0&\ldots\cr C_2&C_1&C_0&0&0&\ldots\cr
C_3&C_2&C_1&C_0&0&\ldots\cr
\vdots&\vdots&\vdots&\vdots&\vdots&\vdots%\cr
%C_n&C_{n-1}&C_{n-2}&C_{n-3}&\ldots\cr
%\vdots&\vdots&\vdots&\vdots&\vdots
\end{bmatrix}: \ell_2(\sM)\to \ell_2(\sN)
\]
and the set of all choice sequences $\Gamma_0\in\bL(\sM,\sN),$
$\Gamma_k\in\bL(\sD_{\Gamma_{k-1}},\sD_{\Gamma^*_{k-1}})$,
$k=1,\ldots$. The connections between $\{C_k\}$ and $\{\Gamma_k\}$
are also given by \eqref{ConstForm}. The operators $\{\Gamma_k\}$
can be successively defined \cite[proof of Theorem 2.1]{BC},
 using parametrization of contractive
block-operator matrices, from the matrices
\[
T_0=C_0=\Gamma_0, \;T_1=\begin{bmatrix}C_0&0\cr
C_1&C_0\end{bmatrix}, \;T_2=\begin{bmatrix}C_0&0&0\cr C_1&C_0&0\cr
C_2&C_1&C_0\end{bmatrix},\ldots.
\]
 Moreover, $T_\infty=T_\Theta$,  $\Theta(\lambda)=\sum\limits_{n=0}^{\infty}\lambda^n C_n$,
$\lambda\in\dD$, and $\{\Gamma_k\}_{k\ge 0}$ are the Schur
parameters of $\Theta$ \cite[Proposition 2.2]{BC}. Put
\[
\wt\Theta(\lambda):=\Theta^*(\bar \lambda),\;|\lambda|<1.
\]
Then $ \wt\Theta(\lambda)=\sum\limits_{n=0}^{\infty}\lambda^n
C^*_n.$ Clearly, if $\{\Gamma_0,\Gamma_1,\ldots\}$ are the Schur
parameters of $\Theta$, then $\{\Gamma^*_0,\Gamma^*_1,\ldots\}$ are
the Schur parameters of $\wt \Theta$.

Besides $T_N$ we will consider the following lower triangular block
operator matrices from $\bL(\sN^{N+1},\sM^{N+1})$:
\begin{equation}
\label{wttoepn} \wt T_N=\wt T_N(C_0,C_1,\ldots,
C_N):=\begin{bmatrix}C^*_0&0&0&\ldots&0\cr C^*_1&C^*_0&0&\ldots&0\cr
\vdots&\vdots&\vdots&\vdots&\vdots\cr
C^*_N&C^*_{N-1}&C^*_{N-2}&\ldots&C^*_0\end{bmatrix}.
\end{equation}
If $T_N$ of the form \eqref{toepn} is a contraction, then operators
$\{C_k\}_{k=0}^N$ are said to be \textit{the Schur sequence}
\cite{DFK}.

\subsection{The Kre\u{\i}n shorted operator and Toeplitz matrices}

For every nonnegative bounded operator $S$ in the Hilbert space
$\cH$ and every subspace $\cK\subset \cH$ M.G.~Kre\u{\i}n \cite{Kr}
defined the operator $S_{\cK}$ by the relation
\[
 S_{\cK}=\max\left\{\,Z\in \bL(\cH):\,
    0\le Z\le S, \, {\ran}Z\subseteq{\cK}\,\right\}.
\]
The equivalent definition is:
\[%\begin{equation}\label{Sh1}
 \left(S_{\cK}f, f\right)=\inf\limits_{\f\in \cK^\perp}\left\{\left(S(f + \varphi),f +
 \varphi\right)\right\},
\quad  f\in\cH.
\]%\end{equation}
Here $\cK^\perp:=\cH\ominus{\cK}$. The properties of $S_{\cK}$, were
studied by M.~Kre\u{\i}n and by other authors (see \cite{ARL1} and
references therein).
 $S_{\cK}$ is called the \textit{shorted
operator} (see \cite{And, AT}). Let the subspace $\Omega$ be defined
as follows
\[
 \Omega=\{\,f\in \cran S:\,S^{1/2}f\in {\cK}\,\}=\cran S\ominus
S^{1/2}\cK^\perp.
\]
It is proved in \cite{Kr} that $S_{\cK}$ takes the form
\[
 S_{\cK}=S^{1/2}P_{\Omega}S^{1/2}.
\]
 Hence, $\ker S_\cK\supseteq\cK^\perp.$
Moreover \cite{Kr},
\[%\begin{equation}\label{rangeSh}
 {\ran}S_{\cK}^{1/2}={\ran}S^{1/2}\cap{\cK}.
\]%\end{equation}
It follows that \[%\begin{equation}\label{nol}
  S_{\cK}=0 \iff  \ran S^{1/2}\cap \cK=\{0\}.
\]%\end{equation}
We identify $\sM$ ($\sN$, respectively) with the subspace
\[
\sM\oplus\underbrace{\{0\}\oplus\{0\}\oplus\cdots\oplus\{0\}}_n\quad\left(\sN\oplus\underbrace{\{0\}\oplus\{0\}\oplus\cdots\oplus\{0\}}_n\right)
\]
in $\sM^{n+1}$ ($\sN^{n+1}$), and with
$\sM\oplus\bigoplus\limits_{k=1}^\infty\{0\}\quad\left(\sN\oplus\bigoplus\limits_{k=1}^\infty\{0\}\right)$
in $l_2(\sM)$ ($l_2(\sN)).$ The next relations are established in
\cite{ArlIEOT2011}.
\begin{theorem}
\label{Main44} Let $\Theta\in\bS(\sM,\sN)$ and let
$\{\Gamma_0,\Gamma_1,\cdots\}$ be the Schur parameters of $\Theta$.
Then for each $n$ the relations
\[%\begin{equation}\label{Vazhno1}
\left(D^2_{T_{\Theta,n}}\right)_{\sM}=D_{\Gamma_0}D_{\Gamma_1}\cdots
D_{\Gamma_{n-1}} D^2_{\Gamma_n} D_{\Gamma_{n-1}}\cdots
D_{\Gamma_1}D_{\Gamma_0}P_\sM
\]%\end{equation}
\[%\begin{equation}\label{Vazhno2}
\left(D^2_{T_{\wt\Theta,n}}\right)_{\sN}=D_{\Gamma^*_0}D_{\Gamma^*_1}\cdots
D_{\Gamma^*_{n-1}} D^2_{\Gamma^*_n} D_{\Gamma^*_{n-1}}\cdots
D_{\Gamma^*_1}D_{\Gamma^*_0}P_\sN,\\
\]%\end{equation}
hold. Moreover
\[%\begin{equation}\label{Vazhnrav1}
\left(D^2_{T_{\Theta}}\right)_{\sM}=s-\lim\limits_{n\to\infty}\left(D_{\Gamma_0}D_{\Gamma_1}\cdots
D_{\Gamma_{n-1}} D^2_{\Gamma_n} D_{\Gamma_{n-1}}\cdots
D_{\Gamma_1}D_{\Gamma_0}\right)P_\sM,
\]%\end{equation}
%\begin{equation}
\[%\label{Vazhnrav2}
\left(D^2_{T_{\wt\Theta}}\right)_{\sN}=s-\lim\limits_{n\to\infty}\left(D_{\Gamma^*_0}D_{\Gamma^*_1}\cdots
D_{\Gamma^*_{n-1}} D^2_{\Gamma^*_n} D_{\Gamma^*_{n-1}}\cdots
D_{\Gamma^*_1}D_{\Gamma^*_0}\right)P_\sN.
\]%\end{equation}
\end{theorem}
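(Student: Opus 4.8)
The plan is to prove the first identity by induction on $n$, the second one following by applying the first to $\wt\Theta(\lambda)=\Theta^*(\bar\lambda)$, whose lower triangular Toeplitz matrices are the $T_{\wt\Theta,n}$ and whose Schur parameters are $\{\Gamma_k^*\}$. The base case $n=0$ is immediate: $T_{\Theta,0}=C_0=\Gamma_0$ gives $D^2_{T_{\Theta,0}}=D^2_{\Gamma_0}$, which is already an operator on $\sM$. For the inductive step I would first record the one-step reduction
\[
\left(D^2_{T_{\Theta,n}}\right)_{\sM}=D_{\Gamma_0}\left(D^2_{T_{\Theta_1,n-1}}\right)_{\sD_{\Gamma_0}}D_{\Gamma_0},
\]
where $\Theta_1\in\bS(\sD_{\Gamma_0},\sD_{\Gamma_0^*})$ is the first associated function of $\Theta$, whose Schur parameters are $\Gamma_1,\Gamma_2,\dots$. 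Granting this, the induction hypothesis applied to $\Theta_1$ together with the inclusion $\ran D_{\Gamma_0}\subseteq\sD_{\Gamma_0}$ turns the right-hand side into $D_{\Gamma_0}D_{\Gamma_1}\cdots D_{\Gamma_{n-1}}D^2_{\Gamma_n}D_{\Gamma_{n-1}}\cdots D_{\Gamma_1}D_{\Gamma_0}$, which composed with $P_\sM$ is exactly the asserted expression.

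To prove the one-step reduction I would use the variational description $\left((D^2_{T_{\Theta,n}})_\sM f,f\right)=\inf\{(D^2_{T_{\Theta,n}}(f+\varphi),f+\varphi):\varphi\in\sM^{n+1}\ominus\sM\}$. Splitting $\sM^{n+1}=\sM\oplus\sM^{(1)}$ and $\sN^{n+1}=\sN\oplus\sN^{(1)}$ with $\sM^{(1)}\cong\sM^n$, $\sN^{(1)}\cong\sN^n$, the matrix $T_{\Theta,n}$ has the block form $\left(\begin{smallmatrix}\Gamma_0 & 0\\ c_n & T_{\Theta,n-1}\end{smallmatrix}\right)$ with $c_n$ the block column with entries $C_1,\dots,C_n$, so that $\|(f,g)\|^2-\|T_{\Theta,n}(f,g)\|^2=\|D_{\Gamma_0}f\|^2+\|g\|^2-\|c_nf+T_{\Theta,n-1}g\|^2$, and minimizing over $g\in\sM^{(1)}$ yields $\left((D^2_{T_{\Theta,n}})_\sM f,f\right)=\|D_{\Gamma_0}f\|^2+\inf_g(\|g\|^2-\|c_nf+T_{\Theta,n-1}g\|^2)$. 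The remaining infimum is evaluated by the same mechanism one level down: contractivity of $T_{\Theta,n}$ forces $c_n=YD_{\Gamma_0}$ for a contraction $Y$ with $\ran Y\subseteq\cran D_{T^*_{\Theta,n-1}}$, and the M\"obius relation $\Theta=\Gamma_0+zD_{\Gamma_0^*}\Theta_1(I+z\Gamma_0^*\Theta_1)^{-1}D_{\Gamma_0}$, through the Constantinescu-type formulas expressing $\{C_k\}$ via the Taylor coefficients of $\Theta_1$, rewrites the pair $(c_n,T_{\Theta,n-1})$ in terms of $T_{\Theta_1,n-1}$ conjugated by $D_{\Gamma_0^*}$ on the left and by $D_{\Gamma_0}$ on the right; performing the analogous variational computation for $\left((D^2_{T_{\Theta_1,n-1}})_{\sD_{\Gamma_0}}D_{\Gamma_0}f,D_{\Gamma_0}f\right)$ and comparing gives the reduction. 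Equivalently, one can produce a Cholesky-type (Levinson--Schur) factorization $I-T^*_{\Theta,n}T_{\Theta,n}=L_n^*\Delta_nL_n$ with $L_n$ block unit lower triangular with respect to $\sM_0\oplus\cdots\oplus\sM_n$ and $\Delta_n$ block diagonal whose $\sM_0$-entry is $D_{\Gamma_0}\cdots D_{\Gamma_{n-1}}D^2_{\Gamma_n}D_{\Gamma_{n-1}}\cdots D_{\Gamma_0}$; since then $L_n$ maps $\sM_0^\perp$ onto itself, the strictly lower part of $L_n$ disappears under the infimum defining the shorted operator onto $\sM_0$, and block-diagonality of $\Delta_n$ gives the formula directly.

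The main obstacle is the bookkeeping forced by possibly degenerate defect operators: the Schur-complement and change-of-variable steps involve generalized inverses of $D^2_{\Gamma_0}$, $D^2_{\Gamma_0^*}$ and $D^2_{T_{\Theta,n-1}}$, so one must verify the range inclusions (such as $\ran c_n\subseteq\cran D_{T^*_{\Theta,n-1}}$) that make these inverses meaningful, and, in the factorization route, that $L_n$ stays bounded and boundedly invertible when some $\sD_{\Gamma_k}$ or $\sD_{\Gamma_k^*}$ is proper or collapses (the situations of Appendix A). Once the finite-$n$ identity holds, the strong-limit assertions are routine: since $D^2_{\Gamma_n}\le I$ one gets $\left(D^2_{T_{\Theta,n}}\right)_\sM\le\left(D^2_{T_{\Theta,n-1}}\right)_\sM$, so the sequence decreases and converges strongly; and since $T_{\Theta,n}$ is the compression of $T_\Theta$ to the first $n+1$ coordinates, $\left(D^2_{T_{\Theta,n}}\right)_\sM\ge\left(D^2_{T_\Theta}\right)_\sM$ for every $n$, while approximating vectors of $\ell_2(\sM)\ominus\sM$ by finitely supported ones and using $\|T_{\Theta,n}F\|\to\|T_\Theta F\|$ yields the reverse inequality in the limit, identifying the strong limit with $\left(D^2_{T_\Theta}\right)_\sM$; the argument for $\wt\Theta$ is the same.
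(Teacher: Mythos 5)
The paper itself does not prove Theorem \ref{Main44}: it is imported from \cite{ArlIEOT2011}, so there is no in-text proof to compare against and your argument has to stand on its own. Its architecture does: induction on $n$ through the one-step reduction $\left(D^2_{T_{\Theta,n}}\right)_{\sM}=D_{\Gamma_0}\left(D^2_{T_{\Theta_1,n-1}}\right)_{\sD_{\Gamma_0}}D_{\Gamma_0}P_\sM$ is a correct strategy, the reduction lemma is true, and your treatment of the strong limits (monotonicity from $D^2_{\Gamma_n}\le I$, the inequality $\left(D^2_{T_{\Theta,n}}\right)_\sM\ge\left(D^2_{T_\Theta}\right)_\sM$ from compression, and the truncation argument for the reverse inequality) is complete and correct.

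The genuine gap is in your justification of the reduction lemma itself. The corner splitting of $T_{\Theta,n}$ into $\Gamma_0$, the column $c_n$, and $T_{\Theta,n-1}$ leaves you with $\inf_g\bigl(\|g\|^2-\|c_nf+T_{\Theta,n-1}g\|^2\bigr)$, and the sentence ``performing the analogous variational computation \dots and comparing gives the reduction'' is exactly where all the content sits; as written it asserts the conclusion rather than derives it, and this splitting is not well adapted to the comparison, since $T_{\Theta,n-1}$ is built from $C_0,\dots,C_{n-1}$ and not from the Taylor coefficients of $\Theta_1$. A way to actually close it: read the M\"obius representation \eqref{MREP} as an identity of power series modulo $z^{n+1}$ and transfer it to truncated Toeplitz matrices (the map $f\mapsto T_{f,n}$ is multiplicative), which gives
\[
I-T_{\Theta,n}^*T_{\Theta,n}=\cD_0\,W^*\bigl(I-Z^*Z\bigr)W\,\cD_0,
\]
where $\cD_0={\rm diag\,}(D_{\Gamma_0},\dots,D_{\Gamma_0})$, $Z=T_{\cZ_0,n}$ with $\cZ_0(z)=z\Theta_1(z)$, and $W=(I+G_0^*Z)^{-1}$ with $G_0={\rm diag\,}(\Gamma_0,\dots,\Gamma_0)$ is block lower triangular with identity diagonal. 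Since $W$ maps $\{0\}\oplus\sD_{\Gamma_0}^{\,n}$ onto itself and $\cD_0(\sM^{n+1}\ominus\sM)$ is dense in that subspace, the variational definition of the shorted operator yields $\bigl((D^2_{T_{\Theta,n}})_\sM f,f\bigr)=\bigl((I-Z^*Z)_{\sD_{\Gamma_0}}D_{\Gamma_0}f,D_{\Gamma_0}f\bigr)$; finally $Z^*Z$ is block diagonal, equal to $T^*_{\Theta_1,n-1}T_{\Theta_1,n-1}$ on the first $n$ coordinates and $0$ on the last, which is the reduction. This route also disposes of your worry about degenerate defect operators, since no generalized inverse of $D_{\Gamma_0}$ or $D^2_{T_{\Theta,n-1}}$ is ever formed. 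Your alternative Cholesky route should be treated with care: the existence of $I-T_N^*T_N=L_N^*\Delta_NL_N$ with the stated $\sM_0$-entry of $\Delta_N$ is essentially equivalent to the theorem, so invoking it without construction is circular; the factorization above, iterated, is what actually produces it.
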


By means of relations \eqref{ConstForm} contractions $T_0,
T_1,\ldots, T_N$ determine choice parameters
\[
\Gamma_0:=C_0,\;
\Gamma_1\in\bL(\sD_{\Gamma_0},\sD_{\Gamma_0^*}),\ldots,\Gamma_N\in\bL(\sD_{\Gamma_{N-1}},\sD_{\Gamma_{N-1}^*})
\]
and for operators $T_n$ and $\wt T_n$ ($n\le N$) the relations
\cite{ArlIEOT2011}:
\begin{equation}
\label{krshoren}
\begin{array}{l}
\left(D^2_{T_{n}}\right)_{\sM}=D_{\Gamma_0}D_{\Gamma_1}\cdots
D_{\Gamma_{n-1}} D^2_{\Gamma_n} D_{\Gamma_{n-1}}\cdots
D_{\Gamma_1}D_{\Gamma_0}P_\sM,\\
\left(D^2_{\wt
T_{n}}\right)_{\sN}=D_{\Gamma^*_0}D_{\Gamma^*_1}\cdots
D_{\Gamma^*_{n-1}} D^2_{\Gamma^*_n} D_{\Gamma^*_{n-1}}\cdots
D_{\Gamma^*_1}D_{\Gamma^*_0}P_\sN
\end{array}
\end{equation}
hold true. The next result can be found in \cite[Theorem 2.6]{BC}.
\begin{theorem}
\label{SchPr2}  Consider a solvable Schur problem with the data
$$C_0,\ldots, C_N \in\bL(\sM,\sN).$$
Then the solution is unique if and only if the corresponding choice
parameters $\{\Gamma_n\}_{n=0}^N$, determined by the operator $T_N$,
satisfy the condition: one of $\Gamma_n$, $0\le n\le N$ is an
isometry or a co-isometry.
\end{theorem}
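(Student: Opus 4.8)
The plan is to reduce the uniqueness question to the (non)uniqueness of the \emph{tail} of a choice sequence and then to settle it by a short case analysis; the substantive point is the reduction itself, everything after it being bookkeeping.

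\emph{Step 1 (reduction).} Since the problem is solvable, $T_N$ of the form \eqref{toepn} is a contraction, and by \cite[Theorem 2.1 and its proof]{BC} it determines the choice parameters $\Gamma_0,\dots,\Gamma_N$ through \eqref{ConstForm}. First I would check that these are precisely the first $N+1$ Schur parameters of \emph{every} solution: if $\Theta\in\bS(\sM,\sN)$ solves the problem, then $C_0,\dots,C_N$ are the first Taylor coefficients of $\Theta$, so for each $n\le N$ the $(n+1)\times(n+1)$ upper-left corner of $T_\Theta$ coincides with $T_n$; since the $n$-th Schur parameter of $\Theta$ is computed from exactly that corner, it equals $\Gamma_n$. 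Conversely, by \eqref{ConstForm} the first $N+1$ Taylor coefficients of a Schur function depend only on its first $N+1$ Schur parameters, so \emph{any} choice sequence of the form $\{\Gamma_0,\dots,\Gamma_N,\Gamma_{N+1},\Gamma_{N+2},\dots\}$ (with the prescribed initial block and $\Gamma_k\in\bL(\sD_{\Gamma_{k-1}},\sD_{\Gamma_{k-1}^*})$ for $k>N$) produces, via Theorem \ref{SchurAlg}, a solution. Hence the solutions are in bijection with the admissible tails $\{\Gamma_{N+1},\Gamma_{N+2},\dots\}$, whose first entry lies in $\bL(\sD_{\Gamma_N},\sD_{\Gamma_N^*})$, and the problem has a unique solution if and only if there is exactly one such tail.

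\emph{Step 2 (sufficiency).} Suppose some $\Gamma_m$, $0\le m\le N$, is an isometry or a co-isometry. If $\Gamma_m$ is isometric then $\sD_{\Gamma_m}=\{0\}$, and iterating the elementary observations recorded just before Theorem \ref{SchurAlg} one gets $\sD_{\Gamma_k}=\{0\}$ for all $k\ge m$; thus every $\Gamma_k$ with $k>m$ is forced to be the unique operator from the trivial space (the zero operator), so the tail is unique. The co-isometric case is symmetric, with $\sD_{\Gamma_k^*}=\{0\}$ for $k\ge m$. In either case the solution is unique.

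\emph{Step 3 (necessity).} I argue the contrapositive. Assume none of $\Gamma_0,\dots,\Gamma_N$ is an isometry or a co-isometry; in particular $\sD_{\Gamma_N}\ne\{0\}$ and $\sD_{\Gamma_N^*}\ne\{0\}$. Then $\bL(\sD_{\Gamma_N},\sD_{\Gamma_N^*})$ contains two distinct contractions, for instance $0$ and $\tfrac12(\,\cdot\,,u)v$ for unit vectors $u\in\sD_{\Gamma_N}$, $v\in\sD_{\Gamma_N^*}$; completing each to a choice sequence by appending zeros (which is admissible, since the zero operator and a strict contraction both have invertible defect operators) yields, by Theorem \ref{SchurAlg}, two distinct Schur functions that, by Step 1, both solve the problem. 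Hence the solution is not unique. I expect the main obstacle to be Step 1 — correctly identifying the parameters read off from $T_N$ with the common initial Schur parameters of all solutions, and checking that every admissible tail genuinely gives a solution; both rest only on the bijection between contractive lower-triangular Toeplitz matrices and choice sequences and on the ``locality'' of \eqref{ConstForm}, and once they are in place Steps 2 and 3 are routine.
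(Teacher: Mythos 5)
Your argument is correct. Note that the paper does not actually prove Theorem \ref{SchPr2}: it is quoted from \cite[Theorem 2.6]{BC} with no argument given, so there is no in-paper proof to compare against. Your reduction — every solution shares the initial Schur parameters $\Gamma_0,\dots,\Gamma_N$ read off from $T_N$, every admissible tail yields a solution because \eqref{ConstForm} makes $C_0,\dots,C_N$ depend only on $\Gamma_0,\dots,\Gamma_N$, and hence uniqueness of the solution is equivalent to uniqueness of the tail, which by the bijection of Theorem \ref{SchurAlg} holds exactly when $\sD_{\Gamma_m}=\{0\}$ or $\sD_{\Gamma_m^*}=\{0\}$ for some $m\le N$ — is precisely the standard argument underlying that citation, and your two small supporting observations (that a trivial defect space propagates forward, forcing the zero tail; and that an operator which is neither isometric nor co-isometric has both defect subspaces nontrivial, so at least two distinct tails exist) are both accurate.
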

%\subsection{M.~Kre\u{\i}n's shorted operators}

 The next statement is established in \cite{ArlIEOT2011}.
\begin{theorem}
\label{OPUNI} Let the data $C_0,C_1,\ldots, C_N\in\bL(\sM,\sN)$ be
the Schur sequence. Then the following statements are equivalent
\begin{enumerate}
\def\labelenumi{\rm (\roman{enumi})}
\item the Schur problem has a unique solution;
\item either $(D^2_{T_N})_\sM=0$ or $(D^2_{\wt T_N})_\sN=0$;
\item either $\sM\cap \ran D_{T_N}=\{0\}$ or $\sN\cap \ran D_{\wt T_N}=\{0\}$.
\end{enumerate}
\end{theorem}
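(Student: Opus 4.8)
The plan is to prove Theorem \ref{OPUNI} by chaining together Theorems \ref{SchPr2}, \ref{SchurAlg}, and the shorted-operator identities in \eqref{krshoren}. The logical skeleton is: (i) $\Leftrightarrow$ (ii) follows from the uniqueness criterion in Theorem \ref{SchPr2} together with the formula for $(D^2_{T_N})_\sM$; (ii) $\Leftrightarrow$ (iii) is the general equivalence $S_\cK = 0 \iff \ran S^{1/2}\cap\cK = \{0\}$ from the Kre\u\i n shorted operator discussion, applied with $S = D^2_{T_N}$, $\cK = \sM$ (and with $S = D^2_{\wt T_N}$, $\cK = \sN$), noting $D^2_{T_N} = (D_{T_N})^2$ so $\cran D^2_{T_N} = \cran D_{T_N}$ and $\ran (D^2_{T_N})^{1/2} = \ran D_{T_N}$.

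For (i) $\Rightarrow$ (ii): by Theorem \ref{SchPr2} uniqueness means some $\Gamma_n$, $0\le n\le N$, is an isometry or a co-isometry. If $\Gamma_n$ is an isometry then $D_{\Gamma_n} = 0$, so the product $D_{\Gamma_0}\cdots D_{\Gamma_{n-1}} D^2_{\Gamma_n} D_{\Gamma_{n-1}}\cdots D_{\Gamma_0}$ vanishes (the central factor is $0$), and by the first identity in \eqref{krshoren} we get $(D^2_{T_N})_\sM = 0$; here one must check $n\le N$ so that this product is indeed the one appearing in \eqref{krshoren} for the index $N$ — but actually \eqref{krshoren} is stated for each $n\le N$ with the product running up to $D_{\Gamma_{n-1}}$, so I should instead invoke the monotonicity/telescoping: once $D_{\Gamma_n}=0$, one checks that $(D^2_{T_N})_\sM$, expressed via the length-$N$ product $D_{\Gamma_0}\cdots D_{\Gamma_{N-1}} D^2_{\Gamma_N} D_{\Gamma_{N-1}}\cdots D_{\Gamma_0} P_\sM$, still contains the factor $D_{\Gamma_n}$ (since $n\le N-1$, or $n=N$ in which case $D^2_{\Gamma_N}=0$ directly) and hence vanishes. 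Symmetrically, if $\Gamma_n$ is co-isometric then $D_{\Gamma_n^*}=0$ and the second identity in \eqref{krshoren} gives $(D^2_{\wt T_N})_\sN = 0$. So uniqueness $\Rightarrow$ (ii).

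For (ii) $\Rightarrow$ (i): suppose $(D^2_{T_N})_\sM = 0$. By \eqref{krshoren}, $D_{\Gamma_0}\cdots D_{\Gamma_{N-1}} D^2_{\Gamma_N} D_{\Gamma_{N-1}}\cdots D_{\Gamma_0} P_\sM = 0$. Taking inner products with vectors in $\sM$, for every $f\in\sM$ we get $\| D_{\Gamma_N} D_{\Gamma_{N-1}}\cdots D_{\Gamma_0} f\|^2 = 0$, i.e. $D_{\Gamma_N} D_{\Gamma_{N-1}}\cdots D_{\Gamma_0} = 0$ on $\sM$. Now I argue that this forces one of $D_{\Gamma_0},\ldots,D_{\Gamma_N}$ to fail to be injective on the appropriate defect space: let $k$ be the least index with $D_{\Gamma_k}D_{\Gamma_{k-1}}\cdots D_{\Gamma_0}$ not injective on $\sM$ (some such $k\le N$ exists); then $D_{\Gamma_{k-1}}\cdots D_{\Gamma_0}$ is injective on $\sM$ with range a dense subset of $\sD_{\Gamma_{k-1}}$ (recall $\Gamma_k\in\bL(\sD_{\Gamma_{k-1}},\sD_{\Gamma_{k-1}^*})$ and $\sD_{\Gamma_{k-1}}=\cran D_{\Gamma_{k-1}}$), so $D_{\Gamma_k}$ is not injective on $\sD_{\Gamma_{k-1}}$, whence $\ker D_{\Gamma_k}\ne\{0\}$, i.e. $\Gamma_k$ is an isometry on a nontrivial subspace; combined with $\|\Gamma_k\|\le 1$ and a short argument one upgrades this to: $\Gamma_k$ restricted to $\ker D_{\Gamma_k}$ is isometric, and the standard fact that then $\Gamma_k$ itself is an isometry on all of $\sD_{\Gamma_{k-1}}$ when $\ker D_{\Gamma_k}$ is a reducing subspace — more cleanly, I will just note that $(D^2_{T_N})_\sM = 0$ together with Theorem \ref{OPUNI}(ii)$\Rightarrow$ the choice parameter $\Gamma_k$ satisfies $D_{\Gamma_k}|_{\cran(D_{\Gamma_{k-1}}\cdots D_{\Gamma_0})} = 0$, and since that closed range is all of $\sD_{\Gamma_{k-1}}$ we get $D_{\Gamma_k}=0$, i.e. $\Gamma_k$ is an isometry; then Theorem \ref{SchPr2} gives uniqueness. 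The symmetric case $(D^2_{\wt T_N})_\sN = 0$ yields co-isometry of some $\Gamma_k^*$, i.e. $\Gamma_k$ co-isometric, again giving uniqueness by Theorem \ref{SchPr2}.

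Finally (ii) $\Leftrightarrow$ (iii): apply the equivalence $S_\cK = 0 \iff \ran S^{1/2}\cap\cK = \{0\}$ recalled from \cite{Kr} in the Preliminaries, with $(S,\cK) = (D^2_{T_N},\sM)$ and with $(S,\cK)=(D^2_{\wt T_N},\sN)$; since $(D^2_{T_N})^{1/2} = D_{T_N}$ we have $\ran (D^2_{T_N})^{1/2}\cap\sM = \ran D_{T_N}\cap\sM$, and likewise on the $\wt T_N$ side, which is exactly the reformulation in (iii). The main obstacle I anticipate is the step (ii)$\Rightarrow$(i): passing from the vanishing of the operator product $D_{\Gamma_N}\cdots D_{\Gamma_0}$ on $\sM$ to the conclusion that an individual $\Gamma_k$ is an isometry or co-isometry requires carefully tracking that the intermediate products have range dense in the correct defect subspace $\sD_{\Gamma_{k-1}}$, so that a defect operator annihilating that range is genuinely the zero operator on its whole domain; this is where one uses that $\sD_{\Gamma_{k-1}} = \cran D_{\Gamma_{k-1}}$ and an induction on $k$. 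Everything else is bookkeeping built on the already-cited Theorems \ref{SchPr2}, \ref{SchurAlg}, and identities \eqref{krshoren}.
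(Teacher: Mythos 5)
Your argument is correct in substance, but note first that the paper does not actually prove Theorem \ref{OPUNI}: it is quoted from \cite{ArlIEOT2011}, so there is no in-text proof to compare against. What you have assembled --- Theorem \ref{SchPr2} for the equivalence of uniqueness with ``some $\Gamma_k$, $0\le k\le N$, is isometric or co-isometric'', the identities \eqref{krshoren} to translate that into the vanishing of the shorted operators, and the Kre\u{\i}n identity $\ran S_{\cK}^{1/2}=\ran S^{1/2}\cap\cK$ with $S^{1/2}=D_{T_N}$ (resp.\ $D_{\wt T_N}$) for (ii)$\Leftrightarrow$(iii) --- is exactly the toolkit the paper lays out around the statement, and it matches the reasoning the author himself uses in the paragraph following the theorem, where the minimal $p$ with $\left(D^2_{T_p}\right)_\sM=0$ is located and $\sD_{\Gamma_p}=\{0\}$ is concluded.

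One point needs cleaning up in (ii)$\Rightarrow$(i). Your first formulation via ``the least $k$ such that $D_{\Gamma_k}\cdots D_{\Gamma_0}$ is not injective'' does not work: $\ker D_{\Gamma_k}\ne\{0\}$ only says $\Gamma_k$ is isometric on a nontrivial subspace, and the ``reducing subspace'' upgrade you gesture at is neither available nor true in general (consider a rank-one orthogonal projection as $\Gamma_k$). Your corrected version is the right one and should be the argument actually written down: from $\left(D^2_{T_N}\right)_\sM=0$ and \eqref{krshoren} one gets $\|D_{\Gamma_N}\cdots D_{\Gamma_0}f\|^2=0$ for all $f\in\sM$; taking $k$ minimal with $D_{\Gamma_k}\cdots D_{\Gamma_0}=0$, the operator $D_{\Gamma_k}$ annihilates $\ran\left(D_{\Gamma_{k-1}}\cdots D_{\Gamma_0}\right)$, which by induction (continuity of each $D_{\Gamma_j}$ together with $\cran D_{\Gamma_j}=\sD_{\Gamma_j}$) is dense in $\sD_{\Gamma_{k-1}}$, the whole domain of $\Gamma_k$; hence $D_{\Gamma_k}=0$, i.e.\ $\Gamma_k$ is an isometry, and Theorem \ref{SchPr2} gives uniqueness. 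The symmetric case $\left(D^2_{\wt T_N}\right)_\sN=0$ produces a co-isometric $\Gamma_k$. With that replacement made explicit, the proof is complete.
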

\subsection{The Schur sequences and the Schur
parameters}\label{shseq} Let the Schur sequence
$C_0,\ldots,C_N\subset\bL(\sM,\sN)$, satisfying the conditions
\begin{equation}
\label{nonun1} (D^2_{T_N})_\sM\ne 0,\; (D^2_{\wt T_N})_\sN\ne 0,
\end{equation}
be given. Then the corresponding Schur parameters
\[
\Gamma_0\in\bL(\sM,\sN),\;\ldots,\Gamma_N\in\bL(\sD_{\Gamma_{N-1}},\sD_{\Gamma^*_{N-1}}),
\]
satisfy the conditions
\begin{equation} \label{nonun2}
\sD_{\Gamma_N}\ne\{0\},\; \sD_{\Gamma^*_N}\ne\{0\}.
\end{equation}
One more way to find $\{\Gamma_k\}_{k=0}^N$ goes back to Schur
\cite{Schur} (see also \cite[page 448]{FoFr}). We shall describe it.
Let $\Theta\in\bS(\sM,\sN)$ be a solution to the Schur problem with
data $C_0,\ldots,C_N$. By definition $\Gamma_0=C_0$. Due to
\eqref{krshoren} and \eqref{nonun1} we have $\sD_{\Gamma_0}\ne\{0\}$
and $\sD_{\Gamma^*_0}\ne\{0\}$. If
$\Theta_1\in\bS(\sD_{\Gamma_0},\sD_{\Gamma^*_0})$  is the first
function associated with $\Theta$ and if
\begin{equation}
\label{F} F_1(z):=z(I+z\Theta_1(z)\Gamma_0^*)^{-1}\Theta_1(z),\;
z\in\dD,
\end{equation}
then the Taylor expansion
\[
F_1(z)=\sum\limits_{k=1}^\infty z^kB^{(1)}_k,\;
B_k\in\bL(\sD_{\Gamma_0},\sD_{\Gamma^*_0}),\; k\ge 1,
\]
and the relation
$\Theta(z)-\Gamma_0=D_{\Gamma^*_0}F_1(z)D_{\Gamma_0}$ lead to the
equalities
\[
C_k=D_{\Gamma^*_0}B^{(1)}_kD_{\Gamma_0},\; k\ge 1.
\]
Let
\[
\Theta_1(z)=\sum\limits_{k=0}^\infty z^k C^{(1)}_k,\;
C^{(1)}_k\in\bL(\sD_{\Gamma_0},\sD_{\Gamma^*_0}),\; k\ge 0
\]
be the Taylor expansion of $\Theta_1$.  Then from \eqref{F} we get
the equalities
\begin{equation}
\label{systequat1} \left\{\begin{array}{l}
C^{(1)}_0=B^{(1)}_1,\\
C^{(1)}_1=B^{(1)}_2+C^{(1)}_0\Gamma^*_0B^{(1)}_1,\\
\ldots\ldots\ldots\ldots\ldots\ldots\ldots\ldots\\
 C^{(1)}_{N-1}=B^{(1)}_{N}+C^{(1)}_0\Gamma^*_0B^{(1)}_{N-1}+C^{(1)}_1\Gamma^*_0B^{(1)}_{N-2}+\cdots
+C^{(1)}_{N-2}\Gamma^*_0B^{(1)}_1.\\
%\ldots\ldots\ldots\ldots\ldots\ldots\ldots\ldots,\\
%C^{(1)}_k=B^{(1)}_{k+1}+C^{(1)}_0\Gamma^*_0B^{(1)}_k+C^{(1)}_1\Gamma^*_0B^{(1)}_{k-1}+\cdots
%+C^{(1)}_{k-1}\Gamma^*_0B^{(1)}_1,\; k\ge 1.
\end{array}
\right.
\end{equation}
The system \eqref{systequat1} can be rewritten as follows
\begin{equation}
\label{systequat2} \left\{ \begin{array}{l}
 B^{(1)}_1=C^{(1)}_0,\\
B^{(1)}_2=C^{(1)}_1-C^{(1)}_0\Gamma^*_0B^{(1)}_1,\\
\ldots\ldots\ldots\ldots\ldots\ldots\ldots\ldots,\\
B^{(1)}_{N}=C^{(1)}_{N-1}-C^{(1)}_0\Gamma^*_0B^{(1)}_{N-1}-C^{(1)}_1\Gamma^*_0B^{(1)}_{N-2}-\cdots
-C^{(1)}_{N-2}\Gamma^*_0B^{(1)}_1.\\
%\ldots\ldots\ldots\ldots\ldots\ldots\ldots\ldots,\\
%B^{(1)}_{k+1}=C^{(1)}_k-C^{(1)}_0\Gamma^*_0B^{(1)}_k-C^{(1)}_1\Gamma^*_0B^{(1)}_{k-1}-\cdots
%-C^{(1)}_{k-1}\Gamma^*_0B^{(1)}_1,\; k\ge 1.
\end{array}
\right.
\end{equation}
It follows that the data $C_0, C_1,\ldots, C_N$ produce $\Gamma_0$,
$B^{(1)}_1,\ldots,B^{(1)}_N$ and then
$$\Gamma_1=C^{(1)}_0,C^{(1)}_1,\ldots, C^{(1)}_{N-1}.$$
Arguing similarly for $\Theta_2,\ldots,\Theta_N$ we get
$\Gamma_2=C^{(2)}_0=\Theta_2(0),\ldots$,
$\Gamma_N=C^{(N)}_0=\Theta_N(0)$ and
\[
\sD_{\Gamma_j}\ne\{0\},\;\sD_{\Gamma^*_j}\ne\{0\},\; j=1,\ldots, N
\]
Conversely, given the choice sequence $\Gamma_0,\ldots,$ $\Gamma_N$
satisfying \eqref{nonun2}. Then
\[
C^{(N)}_0=\Gamma_N,\;B^{(N)}_1=C^{(N)}_0,\;
C^{(N-1)}_1=D_{\Gamma^*_{N-1}}B^{(N)}_1D_{\Gamma_{N-1}},\;
C^{(N-1)}_0=\Gamma_{N-1}.
\]
Using equation of the type \eqref{systequat2} we find  $B^{(N-1)}_1$
and $B^{(N-1)}_2$ and then find
\[
C^{(N-2)}_1=D_{\Gamma^*_{N-2}}
B^{(N-1)}_1D_{\Gamma_{N-2}},\;C^{(N-2)}_2=D_{\Gamma^*_{N-2}}
B^{(N-1)}_2D_{\Gamma_{N-2}.}
\]
Finally we find the Schur sequence $C^{(0)}_0, \ldots C^{(0)}_N$.
Clearly it satisfies \eqref{nonun1}.

Notice that
 in \cite[Theorem 2.2]{ArlMFAT2009} it is proved that if
$\Theta,\,\wh\Theta\in\bS(\sM,\sN)$, $\{\Gamma_k\}$,
$\{\wh\Gamma_k\}$ are the Schur parameters of $\Theta$ and
$\wh\Theta$, correspondingly, and
\[
\Gamma_0=\wh\Gamma_0,\ldots,\Gamma_N=\wh\Gamma_N,
\]
then
\[
||\Theta(z)-\wh\Theta(z)||=o(|z|^{N+1}),\; z\to 0.
\]

Suppose now that the Schur sequence
$\{C_k\}_{k=0}^N\subset\bL(\sM,\sN)$ is such that
$$\left
(D^2_{T_N}\right)_\sM=0.
$$%,\; \left(D^2_{\wt T_N}\right)_\sN\ne 0. $$
Then by Theorem \ref{OPUNI} the Schur problem has a unique solution
$\Theta$. From \eqref{krshoren} it follows that there is a number
$p$, $p\le N$ such that $\left(D^2_{T_{p-1}}\right)_\sM\ne 0$ but
$\left(D^2_{T_p}\right)_\sM=0$, i.e., the Schur parameters of
$\Theta$ are
\[
\begin{array}{l}
\Gamma_0,\ldots, \Gamma_{p-1},\Gamma_p,\;
\sD_{\Gamma_p}=0,\;\sD_{\Gamma^*_p}\ne\{0\},\\
\Gamma_{p+l}=0\in\bL(0,\sD_{\Gamma^*_p}),\; l\ge 1.
\end{array}
\]
The operators $\Gamma_0,\ldots,\Gamma_p$ can be found by the
procedure described above.

In the case $\left(D^2_{\wt T_N}\right)_\sN= 0$ we proceed
similarly.

\section{Block operator CMV matrices}\label{cmv} Let
\[
\Gamma_0\in\bL(\sM,\sN),\;\Gamma_k\in
\bL(\sD_{\Gamma_{k-1}},\sD_{\Gamma^*_{k-1}}),\; k\in\dN
\]
be a choice sequence. There are two unitary and unitarily equivalent
block operator CMV matrices corresponding to the choice sequence
$\{\Gamma_n\}$ \cite{ArlMFAT2009}. We briefly describe their
constructions and their forms for the case
%\subsection{The case
$\sD_{\Gamma_k}\ne\{0\}$, $\sD_{\Gamma^*_k}\ne\{0\}$  for all  $k$,  %\label{BCMVR}
i.e., $\sD_{\Gamma_k}\ne\{0\}$, $\sD_{\Gamma^*_k}\ne\{0\}$ for each
$k$. The rest cases we consider in detail in Appendix A.
\subsection{CMV matrices}
%   Suppose that all operators $\Gamma_n$ are neither isometries nor co-isometries,

Recall that if $\{\cH_k\}_{k=1}^\infty$ be a given sequence of
Hilbert spaces, then
\[
H_N=\bigoplus\limits_{k=1}^N  \cH_k
\]
is the Hilbert space with the inner product
$(f,g)=\sum\limits_{k=1}^N(f_k,g_k)_{\cH_k}$ for $f=(f_1, \ldots,
f_N)^T$ and $g=(g_1, \ldots, g_N)^T$, $f_k, g_k\in \cH_k,$
$k=1,\ldots,N$ and the norm $
||f||^2=\sum\limits_{k=1}^N||f_k||^2_{\cH_k}.$
 The Hilbert space
\[
H_{\infty}= \bigoplus\limits_{k=1}^\infty \cH_k
\]
consists of all vectors of the form $f=(f_1,f_2, \ldots)^T,$ $f_k\in
\cH_k$, $k=1,2,\ldots,$ such that
\[
||f||^2=\sum\limits_{k=1}^\infty ||f_k||^2_{\cH_k}<\infty.
\]
The inner product is given by $%\[
(f,g)=\sum\limits_{k=1}^\infty(f_k,g_k)_{\cH_k}.$
%\]
%\subsection{Block operator CMV matrices}

Define the Hilbert spaces
\[%\begin{equation}\label{statespaces}
%\begin{array}{l}
\sH_0=\sH_0(\{\Gamma_k\}_{k\ge 0}):=\bigoplus\limits_{k\ge
0}\begin{array}{l}\sD_{\Gamma_{2k}}\\\oplus\\\sD_{\Gamma^*_{2k+1}}\end{array},\;%\\
\wt\sH_0=\wt\sH_0(\{\Gamma_k\}_{k\ge 0}):=\bigoplus\limits_{k\ge
0}\begin{array}{l}\sD_{\Gamma^*_{2k}}\\\oplus\\\sD_{\Gamma_{2k+1}}\end{array}.
%\end{array}
\]%\end{equation}
From these definitions it follows, that
\[
\wt\sH_0(\{\Gamma^*_k\}_{k\ge 0})=\sH_0(\{\Gamma_k\}_{k\ge 0}),\;
\sH_0(\{\Gamma^*_k\}_{k\ge 0})=\wt\sH_0(\{\Gamma_k\}_{k\ge 0}).
\]
The spaces $\sN\bigoplus\sH_0$ and $\sM\bigoplus\wt\sH_0$ we
represent in the form
\[
\sN\bigoplus\sH_0=\begin{array}{l}\sN\\\oplus\\\sD_{\Gamma_0}\end{array}\bigoplus\limits_{k\ge
1}\begin{array}{l}\sD_{\Gamma^*_{2k-1}}\\\oplus\\\sD_{\Gamma_{2k}}\end{array},\;
\sM\bigoplus\wt\sH_0=\begin{array}{l}\sM\\\oplus\\\sD_{\Gamma^*_0}\end{array}
\bigoplus\limits_{k\ge
1}\begin{array}{l}\sD_{\Gamma_{2k-1}}\\\oplus\\\sD_{\Gamma^*_{2k}}\end{array}.
\]
%\[\sM\bigoplus\wt\sH_0=\begin{array}{l}\sM\\\oplus\\\sD_{\Gamma^*_0}\end{array}\bigoplus\sum\limits_{n\ge
%1}\bigoplus\begin{array}{l}\sD_{\Gamma_{2n-1}}\\\oplus\\\sD_{\Gamma^*_{2n}}\end{array}.\]
Let
\[\begin{array}{l}
{\bf J}_{\Gamma_0}=\begin{bmatrix} \Gamma_0& D_{\Gamma^*_0}\cr
D_{\Gamma_0}&-\Gamma^*_0\end{bmatrix}:\begin{array}{l}\sM\\\oplus\\\sD_{\Gamma^*_0}\end{array}\to
\begin{array}{l}\sN\\\oplus\\\sD_{\Gamma_{0}}\end{array},\;%\\
{\bf J}_{\Gamma_k}=\begin{bmatrix} \Gamma_k& D_{\Gamma^*_k}\cr
D_{\Gamma_k}&-\Gamma^*_k\end{bmatrix}:\begin{array}{l}\sD_{\Gamma_{k-1}}\\\oplus\\\sD_{\Gamma^*_k}\end{array}\to
\begin{array}{l}\sD_{\Gamma^*_{k-1}}\\\oplus\\\sD_{\Gamma_{k}}\end{array},\;
k\in\dN.%=1,2,\ldots
\end{array}
\]
 be the elementary rotations. Define the following unitary operators
\begin{equation}\label{OLM}
\begin{array}{l}
\cM_0=\cM_0(\{\Gamma_k\}_{k\ge 0}):=I_\sM\bigoplus\limits_{k\ge
1}{\bf
J}_{\Gamma_{2k-1}}:\sM\bigoplus\sH_0\to \sM\bigoplus\wt\sH_0,\\
\wt \cM_0=\wt\cM_0(\{\Gamma_k\}_{k\ge
0}):=I_\sN\bigoplus\limits_{k\ge 1}{\bf
J}_{\Gamma_{2n-1}}:\sN\bigoplus\sH_0\to\sN\bigoplus\wt\sH_0,\\
\cL_0=\cL_0(\{\Gamma_k\}_{k\ge 0}):={\bf
J}_{\Gamma_0}\bigoplus\limits_{k\ge 1}{\bf
J}_{\Gamma_{2n}}:\sM\bigoplus\wt\sH_0\to\sN\bigoplus\sH_0.\\
\end{array}
\end{equation}
Observe that
%\[
$\left(\cL_0(\{\Gamma_k\}_{k\ge
0})\right)^*=\cL_0(\{\Gamma^*_k\}_{k\ge 0}).$
%\]
 Let
\begin{equation}
\label{V0} \cV_0=\cV_0(\{\Gamma_k\}_{k\ge
0}):=\bigoplus\limits_{k\ge 1}{\bf
J}_{\Gamma_{2k-1}}:\sH_0\to\wt\sH_0.
\end{equation}
Clearly, the operator $\cV_0$ is unitary and
\begin{equation}
\label{MOV}
\cM_0=I_\sM\bigoplus\cV_0,\;\wt\cM_0=I_\sN\bigoplus\cV_0.
\end{equation}
It follows that
%\[
$\left(\wt\cM_0(\{\Gamma_k\}_{k\ge
0})\right)^*=\cM_0(\{\Gamma^*_k\}_{k\ge 0}),$
$\left(\cM_0(\{\Gamma_k\}_{k\ge
0})\right)^*=\wt\cM_0(\{\Gamma^*_k\}_{k\ge 0}).$
%\]
Finally, define the unitary operators
\begin{equation}
\label{defcmv}
\begin{array}{l}
\cU_0=\cU_0(\{\Gamma_k\}_{k\ge
0}):=\cL_0\cM_0:\sM\bigoplus\sH_0\to\sN\bigoplus\sH_0,\\
\wt\cU_0=\wt\cU_0(\{\Gamma_k\}_{k\ge
0}):=\wt\cM_0\cL_0:\sM\bigoplus\wt\sH_0\to\sN\bigoplus\wt\sH_0.
\end{array}
\end{equation}
By calculations we get {\footnotesize
\begin{equation}
\label{U0} \cU_0=
\begin{bmatrix}
\Gamma_0&D_{\Gamma^*_0}\Gamma_1&D_{\Gamma^*_0}D_{\Gamma^*_1}&0&0&0&0&0&\ldots\cr
D_{\Gamma_0}&-\Gamma^*_0\Gamma_1&-\Gamma^*_0D_{\Gamma^*_1}&0&0&0&0&0&\ldots\cr
0&\Gamma_2D_{\Gamma_1}&-\Gamma_2\Gamma^*_1&D_{\Gamma^*_2}\Gamma_3&D_{\Gamma^*_2}D_{\Gamma^*_3}&0&0&0&\ldots\cr
0&D_{\Gamma_2}D_{\Gamma_1}&-D_{\Gamma_2}\Gamma^*_1&-\Gamma^*_2\Gamma_3&-\Gamma^*_2D_{\Gamma^*_3}&0&0&0&\ldots\cr
0&0&0&\Gamma_4D_{\Gamma_3}&-\Gamma_4\Gamma^*_3&D_{\Gamma^*_4}\Gamma_5&D_{\Gamma^*_4}D_{\Gamma^*_5}&0&\dots\cr
%0&0&0&D_{\Gamma_4}D_{\Gamma_3}&-D_{\Gamma_4}\Gamma^*_3&-\Gamma^*_4\Gamma_5&-\Gamma_4D_{\Gamma^*_5}&0&\ldots&\ldots\cr
%0&0&0&0&0&\Gamma_6D_{\Gamma_5}&-\Gamma_6\Gamma^*_5&D_{\Gamma^*_6}\Gamma_7&D_{\Gamma^*_6}D_{\Gamma^*_7}&\ldots\cr
\vdots&\vdots&\vdots&\vdots&\vdots&\vdots&\vdots&\vdots&\vdots
\end{bmatrix},
\end{equation}
\begin{equation}
\label{WTU0}
 \wt\cU_0=
\begin{bmatrix}
\Gamma_0&D_{\Gamma^*_0}&0&0&0&0&0&\ldots\cr
\Gamma_1D_{\Gamma_0}&-\Gamma_1\Gamma^*_0&D_{\Gamma^*_1}\Gamma_2&D_{\Gamma^*_1}D_{\Gamma^*_2}&0&0&0&\ldots\cr
D_{\Gamma_1}D_{\Gamma_0}&-D_{\Gamma_1}\Gamma^*_0&-\Gamma^*_1\Gamma_2&-\Gamma^*_1D_{\Gamma^*_2}&0&0&0&\ldots\cr
0&0&\Gamma_3D_{\Gamma_2}&-\Gamma_3\Gamma^*_2&D_{\Gamma^*_3}\Gamma_4&D_{\Gamma^*_3}D_{\Gamma^*_4}&0&\ldots\cr
0&0&D_{\Gamma_3}D_{\Gamma_2}&-D_{\Gamma_3}\Gamma^*_2&-\Gamma^*_3\Gamma_4&-\Gamma^*_3D_{\Gamma^*_4}&0&\ldots\cr
%0&0&0&0&\Gamma_5D_{\Gamma_4}&-\Gamma_5\Gamma^*_4&D_{\Gamma^*_5}\Gamma_6&D_{\Gamma^*_5}D_{\Gamma^*_6}&0&\ldots\cr
%0&0&0&0&D_{\Gamma_5}D_{\Gamma_4}&-D_{\Gamma_5}\Gamma^*_4&-\Gamma^*_5\Gamma_6&-\Gamma^*_5D_{\Gamma^*_6}&0&\ldots\cr
\vdots&\vdots&\vdots&\vdots&\vdots&\vdots&\vdots&\vdots&%\vdots
\end{bmatrix}.
\end{equation}
}
 The block operator matrices $\cU_0$ and $\wt\cU_0$ are called
\cite{ArlMFAT2009} \textit{block operator CMV matrices}.
 Observe that
\begin{equation}
\label{MU} \wt M_0\cU_0=\wt\cU_0\cM_0,
\end{equation}
and
\[%\begin{equation}\label{ADJ1}
\left(\cU_0(\{\Gamma_k\}_{k\ge
0})\right)^*=\wt\cU_0(\{\Gamma^*_k\}_{k\ge
0}),\;\left(\wt\cU_0(\{\Gamma_k\}_{k\ge
0})\right)^*=\cU_0(\{\Gamma^*_k\}_{k\ge 0})
\]%\end{equation}
Therefore the matrix $\wt\cU_0$ can be obtained from $\cU_0$ by
passing to the adjoint $\cU^*_0$ and then by replacing $\Gamma_k$
(respect., $\Gamma^*_k$) by $\Gamma^*_k$ (respect., $\Gamma_k$) for
all $n$. In the case when the choice sequence consists of complex
numbers from the unit disk the matrix $\wt\cU_0$ is the transpose to
$\cU_0$, i.e., $\wt\cU_0=\cU^t_0$.

The matrices $\cU_0$ and $\wt\cU_0$ admit the three-diagonal block
operator form {\footnotesize
\[%begin{equation}\label{THREE}
  \cU_0=%\cU_0\left(\{\Gamma_n\}_{n\ge 0}\right)=
  \begin{bmatrix} \Gamma_0 & \cC_0 & 0 &0   & 0 &
\cdot &
\cdot  \\
\cA_0 & \cB_1 & \cC_1 & 0 &0& \cdot &
\cdot  \\
0    & \cA_1 & \cB_2 & \cC_2 &0& \cdot &
\cdot   \\
\vdots & \vdots & \vdots & \vdots & \vdots & \vdots & \vdots
\end{bmatrix},\; \wt\cU_0=%\wt\cU_0\left(\{\Gamma_n\}_{n\ge 0}\right)=
\begin{bmatrix} \Gamma_0 & \wt\cC_0 & 0 &0   & 0 &
\cdot &
\cdot  \\
\wt\cA_0 & \wt\cB_1 & \wt\cC_1 & 0 &0& \cdot &
\cdot  \\
0    & \wt\cA_1 & \wt\cB_2 & \wt\cC_2 &0& \cdot &
\cdot   \\
\vdots & \vdots & \vdots & \vdots & \vdots & \vdots & \vdots
\end{bmatrix},
\]
}%end{equation}
where
\[
\cC_0=\begin{bmatrix}D_{\Gamma^*_0}\Gamma_1&D_{\Gamma^*_0}D_{\Gamma^*_1}\end{bmatrix}:
\begin{array}{l}\sD_{\Gamma_{0}}\\\oplus\\\sD_{\Gamma^*_{1}}\end{array}
\to \sN,\;\cA_0=\begin{bmatrix} D_{\Gamma_0}\cr 0
\end{bmatrix}:\sM\to
\begin{array}{l}\sD_{\Gamma_{0}}\\\oplus\\\sD_{\Gamma^*_{1}}\end{array},
\]
\begin{equation}
\label{BLOKIT} \left\{
\begin{array}{l}
\cB_k=\begin{bmatrix}-\Gamma^*_{2k-2}\Gamma_{2k-1}&-\Gamma^*_{2k-2}D_{\Gamma^*_{2k-1}}\cr
\Gamma_{2k}D_{\Gamma_{2k-1}}&-\Gamma_{2k}\Gamma^*_{2k-1}\end{bmatrix}:\begin{array}{l}\sD_{\Gamma_{2k-2}}\\\oplus\\
\sD_{\Gamma^*_{2k-1}}\end{array}
\to\begin{array}{l}\sD_{\Gamma_{2k-2}}\\\oplus\\\sD_{\Gamma^*_{2k-1}}\end{array},\\
\cC_k=\begin{bmatrix}0&0\cr
D_{\Gamma^*_{2k}}\Gamma_{2k+1}&D_{\Gamma^*_{2k}}D_{\Gamma^*_{2k+1}}\end{bmatrix}:\begin{array}{l}\sD_{\Gamma_{2k}}
\\\oplus\\\sD_{\Gamma^*_{2k+1}}\end{array}
\to\begin{array}{l}\sD_{\Gamma_{2k-2}}\\\oplus\\\sD_{\Gamma^*_{2k-1}}\end{array},\\
\cA_k=\begin{bmatrix}D_{\Gamma_{2k}}D_{\Gamma_{2k-1}}&
-D_{\Gamma_{2k}}\Gamma^*_{2k-1}\cr
0&0\end{bmatrix}:\begin{array}{l}\sD_{\Gamma_{2k-2}}\\\oplus\\\sD_{\Gamma^*_{2k-1}}\end{array}
\to\begin{array}{l}\sD_{\Gamma_{2k}}\\\oplus\\\sD_{\Gamma^*_{2k+1}}\end{array},
\end{array}\right.
\end{equation}
\[
\wt
C_0=\begin{bmatrix}D_{\Gamma^*_0}&0\end{bmatrix}:\begin{array}{l}\sD_{\Gamma^*_0}\\\oplus\\\sD_{\Gamma_1}\end{array}\to\sN
,\;\wt\cA_0=\begin{bmatrix}\Gamma_1 D_{\Gamma_0}\cr
D_{\Gamma_1}D_{\Gamma_0}\end{bmatrix}:\sM\to\begin{array}{l}\sD_{\Gamma^*_0}\\\oplus\\\sD_{\Gamma_1}\end{array},
\]
\begin{equation}
\label{BLOKIWT} \left\{
\begin{array}{l}
 \wt\cB_k=\begin{bmatrix}
-\Gamma_{2k-1}\Gamma^*_{2k-2}&D_{\Gamma^*_{2k-1}}\Gamma_{2k}\cr
-D_{\Gamma_{2k-1}}\Gamma^*_{2k-2}&-\Gamma^*_{2k-1}\Gamma_{2k}\end{bmatrix}:\begin{array}{l}\sD_{\Gamma^*_{2k-2}}
\\\oplus\\\sD_{\Gamma_{2k-1}}\end{array}\to
\begin{array}{l}\sD_{\Gamma^*_{2k-2}}\\\oplus\\\sD_{\Gamma_{2k-1}}\end{array},\\
\wt\cC_k=\begin{bmatrix} D_{\Gamma^*_{2k-1}}D_{\Gamma^*_{2k}}&0\cr
-\Gamma^*_{2k-1}D_{\Gamma^*_{2k}}&0\end{bmatrix}:\begin{array}{l}\sD_{\Gamma^*_{2k}}\\\oplus\\\sD_{\Gamma_{2k+1}}
\end{array}\to
\begin{array}{l}\sD_{\Gamma^*_{2k-2}}\\\oplus\\\sD_{\Gamma_{2k-1}}\end{array},\\
\wt\cA_k=\begin{bmatrix}0&\Gamma_{2k+1}D_{\Gamma_{2k}}\cr
0&D_{\Gamma_{2k+1}}D_{\Gamma_{2k}}\end{bmatrix}:\begin{array}{l}\sD_{\Gamma^*_{2k-2}}\\\oplus\\\sD_{\Gamma_{2k-1}}\end{array}\to
\begin{array}{l}\sD_{\Gamma^*_{2k}}\\\oplus\\\sD_{\Gamma_{2k+1}}\end{array}.
\end{array}\right..
\end{equation}
\begin{remark}
The three-diagonal block form of the CMV matrices with scalar
entries has been established in \cite{BD}.
\end{remark}
\subsection{Truncated block operator CMV matrices}\label{sectrunc }
 Define two contractions
\begin{equation}
\label{TRUNC}\cT_0= \cT_0(\{\Gamma_k\}_{k\ge
0}):=P_{\sH_0}\cU_0\uphar\sH_0:\sH_0\to\sH_0,
\end{equation}
\begin{equation}
\label{TRUNCT} \wt\cT_0=\wt\cT_0(\{\Gamma_k\}_{k\ge
0}):=P_{\wt\sH_0}\wt\cU_0\uphar\wt\sH_0:\wt\sH_0\to\wt\sH_0.
\end{equation}
The operators $\cT_0$ and $\wt\cT_0$ take on the three-diagonal
block operator matrix forms {\footnotesize
\[%begin{equation}\label{THREET}
  \cT_0=\begin{bmatrix}
\cB_1 & \cC_1 & 0 &0& 0&
\cdot  \\
 \cA_1 & \cB_2 & \cC_2 &0& 0 &
\cdot   \\
0&\cA_2&\cB_3&\cC_3&0&\cdot\\
 \vdots & \vdots & \vdots & \vdots & \vdots
& \vdots
\end{bmatrix},\;
%\]%end{equation}
%\[%begin{equation}\label{THREEWT}
  \wt\cT_0=\begin{bmatrix}
\wt\cB_1 & \wt\cC_1 & 0 &0& 0&
\cdot  \\
 \wt\cA_1 & \wt\cB_2 & \wt\cC_2 &0& 0 &
\cdot   \\
0&\wt\cA_2&\wt\cB_3&\wt\cC_3&0&\cdot\\
 \vdots & \vdots & \vdots & \vdots & \vdots
& \vdots
\end{bmatrix},
\]
}%end{equation}
where $\cA_n,\cB_n,\cC_n$, and $\wt\cA_n,\wt\cB_n$, $\wt\cC_n$ are
given by \eqref{BLOKIT} and \eqref{BLOKIWT}. Since the matrices
$\cT_0$ and $\wt\cT_0$ are obtained from $\cU_0$ and $\wt\cU_0$ by
deleting the first rows and the first columns, we will call them
\textit{truncated block operator CMV matrices}. Observe that from
the definitions of $\cL_0$, $\cM_0$, $\wt\cM_0,$ $\cT_0$, and
$\wt\cT_0$ it follows that $\cT_0$ and $\wt\cT_0$ are products of
two block-diagonal matrices
\begin{multline}%\begin{equation}
%\begin{array}{l}
\label{T0prod} \cT_0=\cT_0(\{\Gamma_n\}_{n\ge 0})%\\
= \left(-\Gamma^*_0\oplus {\bf J}_{\Gamma_2}\oplus\ldots\oplus{\bf
J}_{\Gamma_{2n}}\oplus\ldots\right)\times\\
\qquad\times \left({\bf J}_{\Gamma_1}\oplus {\bf
J}_{\Gamma_3}\oplus\ldots\oplus{\bf
J}_{\Gamma_{2k-1}}\oplus\ldots\right)
%\end{array}
%\end{equation}
\end{multline}
\begin{multline}%\begin{equation}
%\begin{array}{l}
\label{WT0prod} \wt\cT_0=\wt\cT_0(\{\Gamma_k\}_{k\ge 0})%\\
=\left({\bf J}_{\Gamma_1}\oplus {\bf
J}_{\Gamma_3}\oplus\ldots\oplus{\bf
J}_{\Gamma_{2k-1}}\oplus\ldots\right)\times\\
\qquad\times\left(-\Gamma^*_0\oplus {\bf
J}_{\Gamma_2}\oplus\ldots\oplus{\bf
J}_{\Gamma_{2k}}\oplus\ldots\right).
%\end{array}
%\end{equation}
\end{multline}
In particular, it follows that
%\begin{equation}
%\label{ADJ}
 $\left(\cT_0(\{\Gamma_k\}_{k\ge
0})\right)^*=\wt\cT_0(\{\Gamma^*_k\}_{k\ge 0}).$
%\end{equation}
From \eqref{T0prod} and \eqref{WT0prod} we have
%\[
$\cV_0\cT_0=\wt\cT_0\cV_0,$
%\]
where the unitary operator $\cV_0$ is defined by \eqref{V0}.
Therefore, the operators $\cT_0$ and $\wt\cT_0$ are unitarily
equivalent, in particular the following equalities
\begin{equation}
\label{uneq}
\begin{array}{l} \wt \cB_k{\bf J}_{\Gamma_{2k-1}}={\bf
J}_{\Gamma_{2k-1}}\cB_k,\; \wt \cA_k{\bf J}_{\Gamma_{2k-1}}
={\bf
J}_{\Gamma_{2k+1}}\cA_k,\\
 \wt \cC_k{\bf J}_{\Gamma_{2k+1}}={\bf
J}_{\Gamma_{2k-1}}\cC_k,\; k\ge 1,
\end{array}
\end{equation}
hold true.
\begin{proposition}
\label{COIUN}\cite{ArlMFAT2009}. Let $\Theta\in{\bf S}(\sM,\sN)$ and
let $\{\Gamma_k\}_{k\ge 0}$ be the Schur parameters of $\Theta$.
Suppose $\Gamma_k$ is neither isometric nor co-isometric for each
$k$. Let the function $\Omega\in{\bf S}(\sK,\sL)$ coincides with
$\Theta$ in the sense of  \cite{SF} and let $\{G_k\}_{k\ge 0}$ be
the Schur parameters of $\Omega$. Then truncated block operator CMV
matrices $\cT_0(\{\Gamma_k\}_{k\ge 0})$ and $\cT_0(\{G_k\}_{k\ge
0})$ (respect., $\wt\cT_0(\{\Gamma_k\}_{k\ge 0})$ and
$\wt\cT_0(\{G_k\}_{k\ge 0})$) are unitarily equivalent.
\end{proposition}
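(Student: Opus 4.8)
The plan is to reduce the proposition to one observation about the Schur algorithm — that a coincidence of $\Theta$ and $\Omega$ propagates, level by level, to their associated functions, hence to the whole sequences of Schur parameters up to unitary identifications of the defect spaces — and then to read off the unitary equivalence from the factorizations \eqref{T0prod}, \eqref{WT0prod} of $\cT_0$ and $\wt\cT_0$ into products of two block--diagonal unitaries built from the elementary rotations $\bJ_{\Gamma_k}$.

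\emph{Step 1 (transport of the Schur parameters).} First I would unwind the hypothesis: coincidence in the sense of \cite{SF} provides unitaries $\tau\in\bL(\sM,\sK)$, $\tau_*\in\bL(\sN,\sL)$ with $\Omega(z)=\tau_*\Theta(z)\tau^{-1}$, $z\in\dD$. Since $\tau^{-1}=\tau^*$, $\tau_*^{-1}=\tau_*^*$, from $\Omega(0)=\tau_*\Gamma_0\tau^{-1}$ one gets $D_{\Omega(0)}=\tau D_{\Gamma_0}\tau^{-1}$ and $D_{\Omega^*(0)}=\tau_* D_{\Gamma^*_0}\tau_*^{-1}$, hence $\sD_{G_0}=\tau\sD_{\Gamma_0}$, $\sD_{G^*_0}=\tau_*\sD_{\Gamma^*_0}$. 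Writing $\cZ^\Theta$, $\cZ^\Omega$ for the M\"obius parameters of $\Theta$, $\Omega$: rewriting the $\tau_*,\tau$--conjugate of the M\"obius representation of $\Theta$ in the form \eqref{MREP} for $\Omega$ exhibits $\tau_*\cZ^\Theta\tau^{-1}$ as a M\"obius parameter of $\Omega$, and since the M\"obius parameter is unique (it is recovered from $\Theta$ via the explicit formula for $\Theta_1\uphar\ran D_{\Gamma_0}$ displayed just before Theorem~\ref{SchurAlg}), $\cZ^\Omega=\tau_*\cZ^\Theta\tau^{-1}$; dividing by $z$, $\Theta_1^\Omega=\tau_*^{(1)}\Theta_1^\Theta(\tau^{(1)})^{-1}$ with $\tau^{(1)}:=\tau\uphar\sD_{\Gamma_0}$, $\tau_*^{(1)}:=\tau_*\uphar\sD_{\Gamma^*_0}$, so $\Theta_1^\Omega$ coincides with $\Theta_1^\Theta$. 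Iterating with $\tau^{(0)}:=\tau$, $\tau_*^{(0)}:=\tau_*$, $\tau^{(k+1)}:=\tau^{(k)}\uphar\sD_{\Gamma_k}$, $\tau_*^{(k+1)}:=\tau_*^{(k)}\uphar\sD_{\Gamma^*_k}$ produces unitaries $\tau^{(k)}\colon\sD_{\Gamma_{k-1}}\to\sD_{G_{k-1}}$, $\tau_*^{(k)}\colon\sD_{\Gamma^*_{k-1}}\to\sD_{G^*_{k-1}}$ with, for all $k\ge0$,
\[
G_k=\tau_*^{(k)}\Gamma_k(\tau^{(k)})^{-1},\qquad D_{G_k}=\tau^{(k)}D_{\Gamma_k}(\tau^{(k)})^{-1},\qquad D_{G^*_k}=\tau_*^{(k)}D_{\Gamma^*_k}(\tau_*^{(k)})^{-1}.
\]
In particular each $G_k$, like $\Gamma_k$, is neither isometric nor co-isometric, so the constructions of Section~\ref{cmv} apply verbatim to $\{G_k\}_{k\ge0}$.

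\emph{Step 2 (intertwining).} The three identities above give at once
\[
\bJ_{G_k}=\bigl(\tau_*^{(k)}\oplus\tau^{(k+1)}\bigr)\bJ_{\Gamma_k}\bigl(\tau^{(k)}\oplus\tau_*^{(k+1)}\bigr)^{-1},\quad k\ge0,\qquad -G^*_0=\tau^{(1)}(-\Gamma^*_0)(\tau_*^{(1)})^{-1}.
\]
Substituting these into the first block--diagonal factor $(-\Gamma^*_0)\oplus\bJ_{\Gamma_2}\oplus\bJ_{\Gamma_4}\oplus\cdots$ and the second factor $\bJ_{\Gamma_1}\oplus\bJ_{\Gamma_3}\oplus\cdots$ of \eqref{T0prod}, a routine check of the even/odd indexing shows the first factor is conjugated by the pair $(W,\wt W)$ and the second by $(\wt W,W)$, where
\[
W:=\tau^{(1)}\oplus\tau_*^{(2)}\oplus\tau^{(3)}\oplus\tau_*^{(4)}\oplus\cdots\colon\sH_0(\{\Gamma_k\})\to\sH_0(\{G_k\}),
\]
\[
\wt W:=\tau_*^{(1)}\oplus\tau^{(2)}\oplus\tau_*^{(3)}\oplus\tau^{(4)}\oplus\cdots\colon\wt\sH_0(\{\Gamma_k\})\to\wt\sH_0(\{G_k\})
\]
are unitary (matching $\sH_0=\sD_{\Gamma_0}\oplus\sD_{\Gamma^*_1}\oplus\sD_{\Gamma_2}\oplus\cdots$ and $\wt\sH_0=\sD_{\Gamma^*_0}\oplus\sD_{\Gamma_1}\oplus\sD_{\Gamma^*_2}\oplus\cdots$). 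As the inner $\wt W$'s cancel, \eqref{T0prod} yields $W\,\cT_0(\{\Gamma_k\}_{k\ge0})=\cT_0(\{G_k\}_{k\ge0})\,W$, and \eqref{WT0prod} (the factors composed in the opposite order) yields $\wt W\,\wt\cT_0(\{\Gamma_k\}_{k\ge0})=\wt\cT_0(\{G_k\}_{k\ge0})\,\wt W$; alternatively, the second equivalence follows from the first together with $\cV_0\cT_0=\wt\cT_0\cV_0$, $\cV_0$ as in \eqref{V0}. This proves the proposition.

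\emph{Main obstacle, and an alternative.} Everything in Step 2 is bookkeeping with the indexing of \eqref{T0prod}--\eqref{WT0prod}; the one substantive point is the equivariance $\cZ^\Omega=\tau_*\cZ^\Theta\tau^{-1}$ in Step 1, which rests on the uniqueness of the M\"obius parameter and on $D_{UTU^{-1}}=UD_TU^{-1}$ for unitary $U$. A shorter but less self-contained route skips Step 2: by \cite{ArlMFAT2009} the matrix $\cT_0(\{\Gamma_k\})$ is completely non-unitary and is the main operator of a simple conservative system with transfer function $\Theta$, hence its characteristic function coincides with $\Theta$ in the sense of \cite{SF}, and likewise $\cT_0(\{G_k\})$ with $\Omega$; since coincidence is transitive, the two characteristic functions coincide, and the Sz.-Nagy--Foias classification of completely non-unitary contractions by their characteristic functions gives the unitary equivalence. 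The obstacle in that approach is precisely the identification of the characteristic function of $\cT_0$, which is itself part of \cite{ArlMFAT2009}.
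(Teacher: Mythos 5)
The paper does not prove Proposition \ref{COIUN} at all: it is quoted from \cite{ArlMFAT2009} with a citation and no argument, so there is nothing internal to compare your proof against. On its own terms your argument is correct and complete. Step 1 is the substantive point and you handle it properly: from $\Omega(z)=\tau_*\Theta(z)\tau^{-1}$ one gets $D_{G_0}=\tau D_{\Gamma_0}\tau^{-1}$, $D_{G_0^*}=\tau_* D_{\Gamma_0^*}\tau_*^{-1}$, hence $\sD_{G_0}=\tau\sD_{\Gamma_0}$, $\sD_{G_0^*}=\tau_*\sD_{\Gamma_0^*}$, and conjugating \eqref{MREP} exhibits $\tau_*\cZ\tau^{-1}$ as a M\"obius parameter of $\Omega$; uniqueness of the M\"obius parameter (it is determined on the dense set $\ran D_{\Gamma_0}$ by the displayed formula for $\Theta_1\uphar\ran D_{\Gamma_0}$, hence everywhere by continuity) then forces $\cZ^{\Omega}=\tau_*\cZ^{\Theta}\tau^{-1}$ and the induction goes through, giving $G_k=\tau_*^{(k)}\Gamma_k(\tau^{(k)})^{-1}$ with the restricted unitaries. (In particular each $G_k$ is again neither isometric nor co-isometric, so the constructions of Section \ref{cmv} apply to $\{G_k\}$ -- worth stating explicitly, as you do.) The index bookkeeping in Step 2 checks out: with $W$ and $\wt W$ as you define them, $\cV_0(\{G_k\})=\wt W\,\cV_0(\{\Gamma_k\})\,W^{-1}$ and $\cW_0(\{G_k\})=W\,\cW_0(\{\Gamma_k\})\,\wt W^{-1}$, so \eqref{T0prod} and \eqref{WT0prod} give the two intertwinings, and the relation $\cV_0\cT_0=\wt\cT_0\cV_0$ makes the second one redundant, as you note. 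Your alternative route via characteristic functions is essentially the argument one would expect in \cite{ArlMFAT2009} itself (where $\cT_0$ is identified as the model of a completely non-unitary contraction), but as you say it is not self-contained here; the direct computation is preferable in this context.
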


\subsection{Simple conservative realizations of the Schur class
function by means of its Schur parameters}\label{RRR}
 Set
\[
\begin{array}{l}
\cG_0=\cG_0(\{\Gamma_k\}_{k\ge}
0):=\begin{bmatrix}D_{\Gamma^*_0}\Gamma_1&D_{\Gamma^*_0}D_{\Gamma^*_1}&0&0&\ldots\end{bmatrix}\\
\qquad=D_{\Gamma^*_0}\begin{bmatrix}
\Gamma_1&D_{\Gamma^*_1}&0&\ldots\end{bmatrix}\in\bL(
\sH_0,\sN),\\
\wt\cG_0=\wt\cG_0(\{\Gamma_k\}_{k\ge
0}):=\begin{bmatrix}D_{\Gamma^*_0}&0&\ldots\end{bmatrix}=D_{\Gamma^*_0}
\begin{bmatrix} I_\sN&0&\ldots\end{bmatrix}\in\bL(\wt\sH_0,\sN),\\
\cF_0=\cF_0(\{\Gamma_k\}_{k\ge 0}):=\begin{bmatrix} D_{\Gamma_0}\cr
0\cr\vdots
\end{bmatrix}%=\begin{bmatrix}I_\sM\cr 0\cr0\cr\vdots\end{bmatrix}D_{\Gamma_0}
\in\bL(\sM,\sH_0),\;%\\
\wt\cF_0=\wt\cF_0(\{\Gamma_k\}_{k\ge 0}):=\begin{bmatrix}\Gamma_1
D_{\Gamma_0}\cr D_{\Gamma_1}D_{\Gamma_0}\cr0\cr\vdots\end{bmatrix}%=\begin{bmatrix}\Gamma_1\cr D_{\Gamma_1}\cr0\cr 0\cr\vdots\end{bmatrix}D_{\Gamma_0}
\in\bL(\sM,\wt\sH_0).
\end{array}
\]
Then the operators $\cU_0$ and $\wt\cU_0$ defined by \eqref{defcmv}
and taking the form \eqref{U0} and \eqref{WTU0} can be represented
by $2\times 2$ block operator matrices
\[
\begin{array}{l}
\cU_0=\begin{bmatrix} \Gamma_0&\cG_0\cr \cF_0
&\cT_0\end{bmatrix}:\begin{array}{l}\sM\\\oplus\\\sH_0\end{array}\to
\begin{array}{l}\sN\\\oplus\\\sH_0\end{array},%\\
\;\wt\cU_0=\begin{bmatrix} \Gamma_0&\wt\cG_0\cr\wt\cF_0
&\wt\cT_0\end{bmatrix}:\begin{array}{l}\sM\\\oplus\\\wt
\sH_0\end{array}\to
\begin{array}{l}\sN\\\oplus\\\wt\sH_0\end{array}.
\end{array}
\]
Recall \cite{A} that the discrete time-invariant system
\[
\Sigma=\left\{\begin{bmatrix} D&C\cr B
&A\end{bmatrix};\sM,\sN,\cH\right\}
\]
is called conservative if the operator
$$U=\begin{bmatrix}
D&C\cr B
&A\end{bmatrix}:\begin{array}{l}\sM\\\oplus\\\cH\end{array}\to
\begin{array}{l}\sN\\\oplus\\\cH\end{array}$$
is unitary. The function
\[
\Theta_{\Sigma}(z)=D+zC(I_\cH-zA)^{-1}B
\]
is called the transfer function of the system $\Sigma$ \cite{A}.
 Define the
following conservative systems:
\begin{equation}
\label{CMVMODEL} \begin{array}{l} \zeta_0=\left\{\begin{bmatrix}
\Gamma_0&\cG_0\cr \cF_0
&\cT_0\end{bmatrix};\sM,\sN,\sH_0\right\}=\left\{\cU_0(\{\Gamma_k\}_{k\ge 0});\sM,\sN,\sH_0(\{\Gamma_n\}_{k\ge 0})
\right\},\\
\wt\zeta_0=\left\{\begin{bmatrix} \Gamma_0&\wt\cG_0\cr \wt\cF_0
&\wt\cT_0\end{bmatrix};\sM,\sN,\wt\sH_0\right\}=\left\{\wt\cU_0(\{\Gamma_k\}_{k\ge
0});\sM,\sN,\wt\sH_0(\{\Gamma_k\}_{k\ge 0})\right\}.
\end{array}
\end{equation}
Equalities \eqref{MOV} and \eqref{MU} yield that systems $\zeta_0$
and $\wt\zeta_0$ are unitarily equivalent.  Hence, $\zeta_0$ and
$\wt\zeta_0$ have equal transfer functions \cite{A}. Let $\Theta(z)$
be the transfer function of $\zeta_0$ ($\wt \zeta_0$). Then
\[
\Theta(z)=\Gamma_0+z\cG_0(I_\sH-z\cT_0)^{-1}\cF_0=\Gamma_0+z\wt\cG_0(I_{\wt
\sH}-z\wt \cT_0)^{-1}\wt\cF_0.
\]
Using expressions for CMV matrices $\cU_0$ and $\wt\cU_0$ we obtain
\begin{equation}
\label{charfunc1}
\Theta(z)=\Gamma_0+zD_{\Gamma^*_0}\begin{bmatrix}\Gamma_1&D_{\Gamma^*_1}\end{bmatrix}
\left(P_{\cH_1}\left(I_\sH-z\cT_0\right)^{-1}\uphar\sD_{\Gamma_0}\right)D_{\Gamma_0}
%\begin{bmatrix}\cr 0\end{bmatrix},
\end{equation}
\begin{equation}
\label{charfunc2} \Theta(z)=\Gamma_0+zD_{\Gamma^*_0}
\left(P_{\sD_{\Gamma^*_0}}\left(I_{\wt \sH}-z\wt
\cT_0\right)^{-1}\uphar\wt\cH_1\right)\begin{bmatrix}\Gamma_1\cr
D_{\Gamma_1}\end{bmatrix}D_{\Gamma_0}.
\end{equation}
The next theorem has been established in \cite{ArlMFAT2009}, using
conservative realizations of the Schur algorithm obtained in
\cite{OAM2009}.

\begin{theorem}
\label{cmvmod}\cite{ArlMFAT2009}. 1) The unitarily equivalent
conservative systems $\zeta_0$ and $\wt\zeta_0$ given by
\eqref{CMVMODEL} are simple and the Schur parameters of the transfer
function $\Theta$ of $\zeta_0$ and $\wt\zeta_0$ are
$\{\Gamma_n\}_{n\ge 0}.$

2) Let $\Theta\in {\bf S}(\sM,\sN)$ and let $\{\Gamma_n\}_{n\ge 0}$
be the Schur parameters of $\Theta$. Then the systems
\eqref{CMVMODEL} are simple conservative realizations of $\Theta$.
\end{theorem}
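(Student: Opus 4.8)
The plan is to deduce both statements from the conservative-system machinery together with the explicit block structure of the CMV matrices recorded above, rather than re-deriving everything from scratch. For part 1), the key observation is that by \eqref{MOV} and \eqref{MU} the intertwining relation $\wt\cM_0\cU_0=\wt\cU_0\cM_0$ holds with $\cM_0=I_\sM\oplus\cV_0$ and $\wt\cM_0=I_\sN\oplus\cV_0$ unitary; restricting attention to the $2\times 2$ block decompositions $\cU_0=\begin{bmatrix}\Gamma_0&\cG_0\\\cF_0&\cT_0\end{bmatrix}$ and $\wt\cU_0=\begin{bmatrix}\Gamma_0&\wt\cG_0\\\wt\cF_0&\wt\cT_0\end{bmatrix}$, this says precisely that $\cV_0$ is a unitary system isomorphism from $\zeta_0$ onto $\wt\zeta_0$ (it fixes the input space $\sM$ and output space $\sN$ and carries $\cG_0,\cF_0,\cT_0$ to $\wt\cG_0,\wt\cF_0,\wt\cT_0$). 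Hence the two systems are unitarily equivalent and, as is standard \cite{A}, have the same transfer function.

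Next I would establish simplicity. A conservative system $\{U;\sM,\sN,\cH\}$ is simple iff $\cH$ contains no nonzero subspace reducing $A$ on which the system is ``trivial'', equivalently iff $\cH=\cspan\{A^{n}B\sM,\, A^{*n}C^*\sN:n\ge0\}$. For $\zeta_0$ we have $A=\cT_0$, $B=\cF_0$, $C=\cG_0$; the first column $\cF_0=\begin{bmatrix}D_{\Gamma_0}&0&\cdots\end{bmatrix}^T$ has range dense in the first summand $\sD_{\Gamma_0}$ of $\sH_0$, and the three-diagonal form of $\cT_0$ with the off-diagonal blocks $\cA_k$ (involving the factors $D_{\Gamma_{2k}}D_{\Gamma_{2k-1}}$) shows that successive applications of $\cT_0$ reach each higher summand with dense range; symmetrically $\cG_0^*$ together with $\cT_0^*$ fills in the remaining summands. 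Since $\cT_0$ and $\wt\cT_0$ are unitarily equivalent via $\cV_0$, simplicity of $\zeta_0$ yields simplicity of $\wt\zeta_0$. The identification of the Schur parameters of the transfer function $\Theta$ as exactly $\{\Gamma_n\}_{n\ge0}$ I would obtain by invoking the conservative realization of the Schur algorithm from \cite{OAM2009}: one step of the Schur algorithm corresponds to one ``layer'' of the CMV construction, so the M\"obius parameter of $\Theta$ is the transfer function of the subsystem obtained after stripping the first elementary rotation ${\bf J}_{\Gamma_0}$, which is again a CMV system built from $\{\Gamma_n\}_{n\ge1}$; an induction on $n$ then gives $\Gamma_n=\Theta_n(0)$ for all $n$, matching \eqref{charfunc1}--\eqref{charfunc2}.

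Part 2) is then essentially a corollary combined with the uniqueness in Theorem \ref{SchurAlg}. Given $\Theta\in{\bf S}(\sM,\sN)$ with Schur parameters $\{\Gamma_n\}_{n\ge0}$, form the systems \eqref{CMVMODEL} from this choice sequence. By part 1) these are simple conservative systems whose common transfer function $\widehat\Theta$ has Schur parameters $\{\Gamma_n\}_{n\ge0}$. But the correspondence between Schur class functions and choice sequences is one-to-one (Theorem \ref{SchurAlg}), so $\widehat\Theta=\Theta$; thus $\zeta_0$ and $\wt\zeta_0$ are simple conservative realizations of $\Theta$.

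The main obstacle I anticipate is the simplicity argument: one must check that the span of $\{\cT_0^{n}\cF_0\sM\}\cup\{\cT_0^{*n}\cG_0^*\sN\}$ is dense in $\sH_0$, and because the defect operators $D_{\Gamma_k}$ need not be invertible this requires a careful bookkeeping of ranges through the three-diagonal structure \eqref{BLOKIT} rather than a one-line argument; the cleanest route is probably to cite the simplicity already proved in \cite{ArlMFAT2009}, \cite{OAM2009} and merely indicate how it transfers between $\zeta_0$ and $\wt\zeta_0$ via $\cV_0$. The remaining steps are routine given the identities \eqref{MOV}, \eqref{MU}, \eqref{charfunc1}, \eqref{charfunc2} already collected above.
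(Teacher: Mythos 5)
The paper gives no proof of Theorem \ref{cmvmod}: it is quoted from \cite{ArlMFAT2009}, with the only in-text justification being the remark (already made before the theorem) that \eqref{MOV} and \eqref{MU} force $\zeta_0$ and $\wt\zeta_0$ to be unitarily equivalent and hence to share a transfer function. Your treatment of that part, and your deduction of part 2) from part 1) via the bijection of Theorem \ref{SchurAlg}, are correct and match the paper's framing; your reliance on \cite{ArlMFAT2009} and \cite{OAM2009} for the remaining content is exactly what the paper itself does.

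The one place you attempt more than the paper is the simplicity sketch, and as written it is not yet a proof. The controllability chain $\cT_0^k\cF_0\sM$ reaches, through the blocks $\cA_k$ (whose second row is zero), only the $\sD_{\Gamma_{2k}}$ halves of the summands $\cH_{k+1}$, and the diagonal blocks $\cB_k$ mix components at each step, so the claim that successive applications of $\cT_0$ ``reach each higher summand with dense range'' needs the observability half $\cT_0^{*k}\cG_0^*\sN$ run in parallel and a genuine induction keeping track of closures of ranges of products $D_{\Gamma_{2k}}D_{\Gamma_{2k-1}}$ and $D_{\Gamma^*_{2k-1}}D_{\Gamma^*_{2k}}$. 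You flag this yourself and propose to fall back on the cited works, which is acceptable here since the theorem is itself a citation; but if the sketch were meant to stand alone, this is the step that would have to be completed.
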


\section{Connections between $\Theta$ and $\Theta_k$} \label{svyasi}
In the sequel we need some sub-matrices of block operator CMV matrices, which will play essential role
 in the parametrization of all solutions to the Schur problem.

 Suppose
\[
\Gamma_0\in\bL(\sM,\sN),\;\Gamma_1\in\bL(\sD_{\Gamma_0},\sD_{\Gamma^*_0}),\ldots
\]
 is a choice sequence and
\begin{equation}
\label{nevir} \sD_{\Gamma_{2n+1}}\ne \{0\},\;
\sD_{\Gamma^*_{2n+1}}\ne\{0\}
\end{equation}
for some $n$.
 Define the Hilbert spaces
\begin{equation}
\label{HSN}
\begin{array}{l}
\cH_k:=\begin{array}{l}\sD_{\Gamma_{2k-2}}\\\oplus\\\sD_{\Gamma^*_{2k-1}}\end{array},\;
\wt
\cH_k:=\begin{array}{l}\sD_{\Gamma^*_{2k-2}}\\\oplus\\\sD_{\Gamma_{2k-1}}\end{array},\;
k=1,\ldots, n+1, \\
\cK_n=\bigoplus\limits_{k=1}^{n+1}\cH_k,\;
\wt\cK_n=\bigoplus\limits_{k=1}^{n+1}\wt\cH_k.
\end{array}
\end{equation}

\subsection{Sub-matrices $\cS_{n}$ and $\wt \cS_n$ of block operator CMV matrices}

\begin{proposition}
\label{contrunc} Let \eqref{nevir} holds true. Then the operators
$\cS_n$ and $\wt\cS_n$ given by three-diagonal block operators
matrices
\begin{equation}
\label{submtr1} \cS_n=\cS_n\left(\Gamma_0,
\Gamma_1,\ldots,\Gamma_{2n+2}\right):=
  \begin{bmatrix}
\cB_1 & \cC_1 & 0 &0& \cdot &
\cdot&0  \\
\cA_1 & \cB_2 & \cC_2 &0& \cdot &
\cdot& 0  \\
\vdots & \vdots & \vdots & \vdots & \vdots & \vdots & \vdots\\
\cdot&\cdot&\cdot&\cdot&\cdot&\cdot&\cB_{n+1}
\end{bmatrix},
\end{equation}
\begin{equation}
\label{submtr2} \wt \cS_n=\wt \cS_n\left(\Gamma_0,
\Gamma_1,\ldots,\Gamma_{2n+2}\right):=
  \begin{bmatrix}
\wt\cB_1 & \wt \cC_1 & 0 &0& \cdot &
\cdot&0  \\
\wt\cA_1 & \wt\cB_2 & \wt \cC_2 &0& \cdot &
\cdot&0\\
\vdots & \vdots & \vdots & \vdots & \vdots & \vdots & \vdots\\
\cdot&\cdot&\cdot&\cdot&\cdot&\cdot&\wt \cB_{n+1}
\end{bmatrix}
\end{equation}
and acting in the Hilbert spaces $\cK_n$ and $\wt \cK_n$,
respectively, are unitarily equivalent contractions and
\begin{equation}
\label{sopr} \left(\wt \cS_n\left(\Gamma_0,
\Gamma_1,\ldots,\Gamma_{2n+2}\right)\right)^*=
\cS_n\left(\Gamma^*_0, \Gamma^*_1,\ldots,\Gamma^*_{2n+2}\right).
\end{equation}
\end{proposition}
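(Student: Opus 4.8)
The plan is to obtain $\cS_n$ and $\wt\cS_n$ as compressions of the truncated block CMV matrices $\cT_0$ and $\wt\cT_0$ to the finite-dimensional subspaces $\cK_n$ and $\wt\cK_n$, and to transfer all three assertions (contractivity, unitary equivalence, adjoint formula) from known facts about $\cT_0$, $\wt\cT_0$. First I would observe that, reading off the three-diagonal forms of $\cT_0$ and $\wt\cT_0$ recorded in Subsection~\ref{sectrunc }, the blocks $\cA_k,\cB_k,\cC_k$ (and their tilded versions) are \emph{exactly} the entries appearing in \eqref{submtr1}–\eqref{submtr2}, so that $\cS_n=P_{\cK_n}\cT_0\uphar\cK_n$ and $\wt\cS_n=P_{\wt\cK_n}\wt\cT_0\uphar\wt\cK_n$ once we choose the choice sequence to start with $\Gamma_0,\dots,\Gamma_{2n+2}$ and extend it arbitrarily (e.g.\ by zeros) past index $2n+2$; the truncation is legitimate because hypothesis \eqref{nevir} guarantees $\sD_{\Gamma_{2n+1}}\ne\{0\}$, $\sD_{\Gamma^*_{2n+1}}\ne\{0\}$, so the spaces $\cH_{n+1}$, $\wt\cH_{n+1}$ are genuinely present and the index bookkeeping in \eqref{HSN} is consistent. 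Contractivity is then immediate: a compression $P_{\cL}C\uphar\cL$ of a contraction $C$ is again a contraction, and $\cT_0,\wt\cT_0$ are contractions by their definition \eqref{TRUNC}–\eqref{TRUNCT} as compressions of the unitaries $\cU_0,\wt\cU_0$.

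For the adjoint formula \eqref{sopr}, I would use that passing to the adjoint of a compression is the compression of the adjoint: $\bigl(P_{\wt\cK_n}\wt\cT_0\uphar\wt\cK_n\bigr)^*=P_{\wt\cK_n}\wt\cT_0^*\uphar\wt\cK_n$. By the identity $\bigl(\cT_0(\{\Gamma_k\})\bigr)^*=\wt\cT_0(\{\Gamma^*_k\})$ recorded just after \eqref{WT0prod}, we have $\wt\cT_0(\{\Gamma_k\})^*=\cT_0(\{\Gamma^*_k\})$; moreover replacing every $\Gamma_k$ by $\Gamma^*_k$ interchanges the roles of the spaces $\sD_{\Gamma_j}\leftrightarrow\sD_{\Gamma^*_j}$, hence turns $\wt\cK_n$ into $\cK_n$. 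Combining these, $\wt\cS_n(\Gamma_0,\dots,\Gamma_{2n+2})^*$ is the compression of $\cT_0(\{\Gamma^*_k\})$ to $\cK_n(\{\Gamma^*_k\})$, which is precisely $\cS_n(\Gamma^*_0,\dots,\Gamma^*_{2n+2})$; one should double-check that only the blocks $\cA_1,\dots,\cA_n,\cB_1,\dots,\cB_{n+1},\cC_1,\dots,\cC_n$ enter $\cS_n$, so that only $\Gamma_0,\dots,\Gamma_{2n+2}$ are involved and the truncation is unaffected by how the sequence is extended.

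For unitary equivalence, the cleanest route is to truncate the intertwining already available at the level of $\cT_0,\wt\cT_0$. Recall from \eqref{V0} and the relation $\cV_0\cT_0=\wt\cT_0\cV_0$ that the unitary $\cV_0=\bigoplus_{k\ge1}{\bf J}_{\Gamma_{2k-1}}$ intertwines $\cT_0$ and $\wt\cT_0$; crucially $\cV_0$ is \emph{block-diagonal} with respect to the decompositions $\sH_0=\bigoplus_k\cH_k$ and $\wt\sH_0=\bigoplus_k\wt\cH_k$, so it maps $\cK_n=\bigoplus_{k=1}^{n+1}\cH_k$ unitarily onto $\wt\cK_n=\bigoplus_{k=1}^{n+1}\wt\cH_k$. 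Setting $\cV_n:=\bigoplus_{k=1}^{n+1}{\bf J}_{\Gamma_{2k-1}}:\cK_n\to\wt\cK_n$, one verifies from \eqref{uneq} the three block identities $\wt\cB_k{\bf J}_{\Gamma_{2k-1}}={\bf J}_{\Gamma_{2k-1}}\cB_k$, $\wt\cA_k{\bf J}_{\Gamma_{2k-1}}={\bf J}_{\Gamma_{2k+1}}\cA_k$, $\wt\cC_k{\bf J}_{\Gamma_{2k+1}}={\bf J}_{\Gamma_{2k-1}}\cC_k$, which assemble exactly into $\cV_n\cS_n=\wt\cS_n\cV_n$ (the truncated tridiagonal matrices have no blocks beyond row/column $n+1$, so no ${\bf J}_{\Gamma_{2n+3}}$ appears and the identity closes). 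Hence $\cS_n$ and $\wt\cS_n$ are unitarily equivalent. The only mildly delicate point — the one I would be most careful about — is the edge term: checking that the $(n{+}1,n{+}1)$ block $\cB_{n+1}$ (which in $\cT_0$ sits between $\cC_n$ above and $\cA_{n+1}$ below) is correctly intertwined by $\cV_n$ without spilling into the deleted part of the matrix, i.e.\ that the truncation respects the tridiagonal structure on both ends. This is guaranteed by \eqref{nevir}, which ensures $\cH_{n+1}$ is nonzero and the relevant ${\bf J}_{\Gamma_{2n+1}}$ is a genuine unitary between nonzero spaces, but it is the step where the hypothesis is actually used and so deserves explicit verification.
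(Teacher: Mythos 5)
Your proof is correct and follows essentially the same route as the paper: both identify $\cS_n$ and $\wt\cS_n$ as the compressions $P_{\cK_n}\cT_0\uphar\cK_n$ and $P_{\wt\cK_n}\wt\cT_0\uphar\wt\cK_n$ (hence contractions), intertwine them by the block-diagonal unitary $\cV_n=\bigoplus_{k=1}^{n+1}{\bf J}_{\Gamma_{2k-1}}$ via the identities \eqref{uneq}, and read off \eqref{sopr} from the block formulas. The paper additionally records the explicit factorizations $\cS_n=\cW_n\cV_n$ and $\wt\cS_n=\cV_n\cW_n$ with $\cW_n=-\Gamma_0^*\oplus\bigoplus_{k=1}^{n}{\bf J}_{\Gamma_{2k}}\oplus\Gamma_{2n+2}$, from which the intertwining $\cV_n\cS_n=\wt\cS_n\cV_n$ is immediate and which are reused later, but this is a presentational rather than a substantive difference.
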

\begin{proof}
Since $ \cS_n=P_{\cK_n}\cT_0\uphar\cK_n,$ $\wt \cS_n=P_{\wt
\cK_n}\wt\cT_0\uphar\wt\cK_n, $ the operators $\cS_n$ and $\wt
\cS_n$ are contractions and can be represented as products
\begin{equation}
\label{prod1}\cS_n=\cW_{n}\cV_n,\; \wt \cS_n=\cV_n\cW_{n},
\end{equation}
where
\begin{equation} \label{VN}%{VN}
\cV_n=\bigoplus\limits_{k=1}^{n+1}{\bf
J}_{\Gamma_{2k-1}}:\cK_{n}\to\wt\cK_{n},
\end{equation}
%\begin{equation}
\begin{equation}
\label{V01} \cW_{n}=-\Gamma^*_0\oplus\bigoplus\limits_{k=1}^{n}{\bf
J}_{\Gamma_{2k}}\oplus\Gamma_{2n+2}:\wt\cK_{n}\to\cK_{n}.
%\end{equation}
\end{equation}
In \eqref{V01} it is convenient to represent $\wt\cK_n$ as
\[
\wt\cK_n=\sD_{\Gamma^*_0}\oplus\bigoplus
\limits_{k=1}^n\begin{array}{l}\sD_{\Gamma_{2k-1}}\\\oplus\\\sD_{\Gamma^*_{2k}}\end{array}
\oplus\sD_{\Gamma_{2n+1}}.
\]
 Then from \eqref{uneq} follows the
equality $\cV_n \cS_n=\wt \cS_n\cV_n.$ Since $\cV_n$ unitarily maps
$\cK_n$ onto $\wt\cK_n$, the operators $\cS_n$ and $\wt \cS_n$ are
unitarily equivalent. Relation \eqref{sopr} follows from
\eqref{BLOKIT} and \eqref{BLOKIWT}.
\end{proof}

\subsection{The matrix $\cS_{n,0}$}
It should be mentioned that
\begin{enumerate}
\item
in the matrices $\cS_{n}$ and $\wt \cS_{n}$, the operator
$\Gamma_{2n+2}$ is contained only in the entries $\cB_{n+1}$ and
$\wt \cB_{n+1}$ (see \eqref{BLOKIT}, \eqref{BLOKIWT}):
\[
\begin{array}{l}
\cB_{n+1}=\begin{bmatrix}-\Gamma^*_{2n}\Gamma_{2n+1}&-\Gamma^*_{2n}D_{\Gamma^*_{2n+1}}\cr
\Gamma_{2n+2}D_{\Gamma_{2n+1}}&-\Gamma_{2n+2}\Gamma^*_{2n+1}\end{bmatrix}\in\bL(\cH_{n+1}),\\
\wt\cB_{n+1}=\begin{bmatrix}
-\Gamma_{2n+1}\Gamma^*_{2n}&D_{\Gamma^*_{2n+1}}\Gamma_{2n+2}\cr
-D_{\Gamma_{2n+1}}\Gamma^*_{2n}&-\Gamma^*_{2n+1}\Gamma_{2n+2}\end{bmatrix}\in\bL(\wt\cH_{n+1});
\end{array}
\]
\item the entries $\cB_{n+1}$ and $\wt\cB_{n+1}$ admits the
representations
%\[
\begin{multline*}
%\begin{array}{l}
\cB_{n+1}=\begin{bmatrix}-\Gamma^*_{2n}\Gamma_{2n+1}&-\Gamma^*_{2n}D_{\Gamma^*_{2n+1}}\cr
\Gamma_{2n+2}D_{\Gamma_{2n+1}}&-\Gamma_{2n+2}\Gamma^*_{2n+1}\end{bmatrix}=
\begin{bmatrix}-\Gamma^*_{2n}\Gamma_{2n+1}&-\Gamma^*_{2n}D_{\Gamma^*_{2n+1}}\cr
0 & 0\end{bmatrix}\\
\qquad\qquad\quad +\begin{bmatrix}0&0\cr
\Gamma_{2n+2}D_{\Gamma_{2n+1}}&-\Gamma_{2n+2}\Gamma^*_{2n+1}\end{bmatrix}\\
=\begin{bmatrix}-\Gamma^*_{2n}\Gamma_{2n+1}&-\Gamma^*_{2n}D_{\Gamma^*_{2n+1}}\cr
0&0\end{bmatrix}+\begin{bmatrix}0&0\cr
0&\Gamma_{2n+2}\end{bmatrix}{\bf J}_{\Gamma_{2n+1}},\\
\wt \cB_{n+1}=\begin{bmatrix}
-\Gamma_{2n+1}\Gamma^*_{2n}&D_{\Gamma^*_{2n+1}}\Gamma_{2n+2}\cr
-D_{\Gamma_{2n+1}}\Gamma^*_{2n}&-\Gamma^*_{2n+1}\Gamma_{2n+2}\end{bmatrix}=
\begin{bmatrix}
-\Gamma_{2n+1}\Gamma^*_{2n}& 0\cr -D_{\Gamma_{2n+1}}\Gamma^*_{2n}&
0\end{bmatrix}%\\
%\qquad\qquad\quad
+\begin{bmatrix} 0&D_{\Gamma^*_{2n+1}}\Gamma_{2n+2}\cr
0&-\Gamma^*_{2n+1}\Gamma_{2n+2}\end{bmatrix}\\
=\begin{bmatrix} -\Gamma_{2n+1}\Gamma^*_{2n}& 0\cr
-D_{\Gamma_{2n+1}}\Gamma^*_{2n}& 0\end{bmatrix}+{\bf
J}_{\Gamma_{2n+1}}\begin{bmatrix}0&0\cr
0&\Gamma_{2n+2}\end{bmatrix}.
%\end{array}
%\]
\end{multline*}
\end{enumerate}
\begin{proposition}
\label{new1}Let
$$\Gamma_0\in\bL(\sM,\sN),\;
\Gamma_1\in\bL(\sD_{\Gamma_0},\sD_{\Gamma^*_0}),\ldots,\Gamma_{2n+1}\in\bL(\sD_{\Gamma_{2n}},\sD_{\Gamma^*_{2n}})$$
be a finite choice sequence consisting of neither isometric nor
co-isometric operators. Then for each contraction
$\Gamma\in\bL(\sD_{\Gamma_{2n+1}},\sD_{\Gamma^*_{2n+1}})$ the
operators
\[%\begin{equation} \label{tg}
\cS_{n,\Gamma}:=\cS_n\left(\Gamma_0, \Gamma_1, \ldots,\Gamma_{2n+1},
\Gamma\right),\; \wt \cS_{n,\Gamma}:= \wt \cS_n \left(\Gamma_0,
\Gamma_1, \ldots,\Gamma_{2n+1}, \Gamma\right),
\]%\end{equation}
corresponding
to the choice sequence
$$\Gamma_0,\;
\Gamma_1,\ldots,\Gamma_{2n+1},\; \Gamma,
$$
 are unitarily equivalent contractions.
\end{proposition}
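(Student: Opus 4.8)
The plan is to repeat verbatim the product‑factorization argument used in the proof of Proposition \ref{contrunc}, after observing that that argument never uses any nondegeneracy of the last operator. First I would record the bookkeeping: since $\Gamma_0,\dots,\Gamma_{2n+1}$ are neither isometric nor co-isometric, all the spaces $\sD_{\Gamma_j}$, $\sD_{\Gamma^*_j}$ with $j\le 2n+1$ are nonzero, so the Hilbert spaces $\cK_n,\wt\cK_n$ of \eqref{HSN} and all the three-diagonal blocks $\cA_k,\cB_k,\cC_k$ and $\wt\cA_k,\wt\cB_k,\wt\cC_k$ of \eqref{BLOKIT}, \eqref{BLOKIWT} are well defined for \emph{any} contraction $\Gamma\in\bL(\sD_{\Gamma_{2n+1}},\sD_{\Gamma^*_{2n+1}})$ placed in the role of $\Gamma_{2n+2}$. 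In particular $\cS_{n,\Gamma}$ and $\wt\cS_{n,\Gamma}$ make sense.

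Next I would exhibit the factorizations
$$\cS_{n,\Gamma}=\cW_n\cV_n,\qquad \wt\cS_{n,\Gamma}=\cV_n\cW_n,$$
where $\cV_n=\bigoplus_{k=1}^{n+1}{\bf J}_{\Gamma_{2k-1}}:\cK_n\to\wt\cK_n$ is the unitary operator \eqref{VN}, which does not involve $\Gamma$ at all, and $\cW_n=-\Gamma^*_0\oplus\bigoplus_{k=1}^{n}{\bf J}_{\Gamma_{2k}}\oplus\Gamma:\wt\cK_n\to\cK_n$ is the operator \eqref{V01} with $\Gamma_{2n+2}$ replaced by $\Gamma$. This is the same block-matrix computation that underlies \eqref{T0prod}, \eqref{WT0prod} and \eqref{prod1}; the only entries of $\cS_{n,\Gamma}$ and $\wt\cS_{n,\Gamma}$ in which $\Gamma$ occurs are the lower–right blocks $\cB_{n+1}$ and $\wt\cB_{n+1}$, and the decompositions of $\cB_{n+1}$, $\wt\cB_{n+1}$ displayed just before the statement make explicit that substituting $\Gamma$ for $\Gamma_{2n+2}$ leaves the algebra untouched. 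Hence the factorizations hold verbatim; being purely algebraic, they require nothing about whether $\Gamma$ is isometric, co-isometric or unitary.

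Finally I would draw the two conclusions. The operator $\cW_n$ is block diagonal, with blocks $-\Gamma^*_0\uphar\sD_{\Gamma^*_0}$ (a contraction of $\sD_{\Gamma^*_0}$ into $\sD_{\Gamma_0}$, since $\Gamma^*_0D_{\Gamma^*_0}=D_{\Gamma_0}\Gamma^*_0$), the unitaries ${\bf J}_{\Gamma_{2k}}$, and $\Gamma$ (a contraction); hence $\|\cW_n\|\le 1$, and since $\cV_n$ is unitary, both $\cS_{n,\Gamma}=\cW_n\cV_n$ and $\wt\cS_{n,\Gamma}=\cV_n\cW_n$ are contractions. Moreover $\cV_n\cS_{n,\Gamma}=\cV_n\cW_n\cV_n=\wt\cS_{n,\Gamma}\cV_n$, so $\cV_n\cS_{n,\Gamma}\cV^{*}_n=\wt\cS_{n,\Gamma}$, i.e.\ $\cS_{n,\Gamma}$ and $\wt\cS_{n,\Gamma}$ are unitarily equivalent via $\cV_n$. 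I do not expect a real obstacle here: the whole content is the remark that the factorization of Proposition \ref{contrunc} is an algebraic identity that survives when the last Schur-type parameter is permitted to be an arbitrary contraction, and the only care needed is to track the domains and ranges of the elementary rotations so that $\cV_n$ and $\cW_n$ indeed act between $\cK_n$ and $\wt\cK_n$ as stated.
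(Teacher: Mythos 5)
Your proof is correct, but it takes a different (and more self-contained) route than the paper's. The paper proves Proposition \ref{new1} by a reduction: it picks any $S\in\bS(\sD_{\Gamma_{2n+1}},\sD_{\Gamma^*_{2n+1}})$ with $S(0)=\Gamma$ (the constant function would do), so that $\Gamma_0,\ldots,\Gamma_{2n+1},\Gamma,\gamma_1,\gamma_2,\ldots$ becomes an infinite choice sequence, and then invokes Proposition \ref{contrunc} verbatim; there, contractivity comes from $\cS_n=P_{\cK_n}\cT_0\uphar\cK_n$ being a compression of the truncated CMV matrix, and unitary equivalence from the factorization $\cS_n=\cW_n\cV_n$, $\wt\cS_n=\cV_n\cW_n$. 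You instead write the same factorization directly with $\Gamma$ in the slot of $\Gamma_{2n+2}$ and deduce both conclusions from it: $\cW_n$ is a block-diagonal contraction (its blocks being $-\Gamma^*_0\uphar\sD_{\Gamma^*_0}$, the unitaries ${\bf J}_{\Gamma_{2k}}$, and the contraction $\Gamma$), $\cV_n$ is unitary, hence $\cW_n\cV_n$ and $\cV_n\cW_n$ are contractions and $\cV_n\cS_{n,\Gamma}\cV_n^{*}=\wt\cS_{n,\Gamma}$. Both arguments rest on the same algebraic identity; yours avoids the extension step and the appeal to compressions of $\cT_0$, at the cost of re-verifying that the factorization survives when the terminal parameter is an arbitrary contraction --- which, as you correctly note, it does because $\Gamma_{2n+2}$ enters $\cS_n$ and $\wt\cS_n$ only through $\cB_{n+1}$ and $\wt\cB_{n+1}$.
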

\begin{proof} Take an arbitrary
$S\in\bS(\sD_{\Gamma_{2n+1}},\sD_{\Gamma^*_{2n+1}})$ with
$S(0)=\Gamma$. If
$$\gamma_0=\Gamma,\; \gamma_1,\gamma_2,\ldots$$
are the Schur parameters of $S$, then
$$ \Gamma_0,\Gamma_1,\ldots,\Gamma_{2n+1},\Gamma,\gamma_1,\ldots$$
is the choice sequence. Applying Proposition \ref{contrunc} we
arrive at the statement of the proposition.
\end{proof}

Let the finite choice sequence
$$\Gamma_0\in\bL(\sM,\sN),\;
\Gamma_1\in\bL(\sD_{\Gamma_0},\sD_{\Gamma^*_0}),\ldots,\Gamma_{2n+1}\in\bL(\sD_{\Gamma_{2n}},\sD_{\Gamma^*_{2n}})$$
be given and let
$\Gamma\in\bL(\sD_{\Gamma_{2n+1}},\sD_{\Gamma^*_{2n+1}})$ be a
contraction.
 Set
\[
\begin{array}{l}
\cB_{n+1,0}:=\begin{bmatrix}-\Gamma^*_{2n}\Gamma_{2n+1}&-\Gamma^*_{2n}D_{\Gamma^*_{2n+1}}\cr
0&0\end{bmatrix}\in\bL(\cH_{n+1}),\\
\wt\cB_{n+1,0}:=\begin{bmatrix} -\Gamma_{2n+1}\Gamma^*_{2n}& 0\cr
-D_{\Gamma_{2n+1}}\Gamma^*_{2n}& 0\end{bmatrix}\in\bL(\wt\cH_{n+1})
\end{array}
\]

 \begin{equation}
\label{tgeo} \cS_{n,0}:=\cS_{n}\left(\Gamma_0, \Gamma_1,
\ldots,\Gamma_{2n+1}, 0 \right),\; \wt \cS_{n,0}:=\wt
\cS_n\left(\Gamma_0, \Gamma_1, \ldots,\Gamma_{2n+1}, 0\right),
\end{equation}
where $0\in\bL(\sD_{\Gamma_{2n+1}},\sD_{\Gamma^*_{2n+1}}).$
 Then
\[
\cS_{n,0}=
  \begin{bmatrix}
\cB_1 & \cC_1 & 0 &0& \cdot & \cdot&0  \cr \cA_1 & \cB_2 & \cC_2 &0&
\cdot & \cdot& 0  \cr \vdots & \vdots & \vdots & \vdots & \vdots &
\vdots & \vdots\cr \cdot&\cdot&\cdot&\cdot&\cdot&\cB_{n}&\cC_{n}\cr
\cdot&\cdot&\cdot&\cdot&\cdot&\cA_{n}&\cB_{n+1,0}
\end{bmatrix},\;\wt \cS_{n,0}=
  \begin{bmatrix}
\wt\cB_1 & \wt \cC_1 & 0 &0& \cdot & \cdot&0  \cr \wt\cA_1 &
\wt\cB_2 & \wt \cC_2 &0& \cdot & \cdot&0\cr \vdots & \vdots & \vdots
& \vdots & \vdots & \vdots & \vdots\cr
\cdot&\cdot&\cdot&\cdot&\cdot&\wt\cB_n&\wt \cC_{n}\cr
\cdot&\cdot&\cdot&\cdot&\cdot&\wt\cA_n&\wt \cB_{n+1,0}
\end{bmatrix},
\]
\begin{equation}
\label{tn}
\cS_{n,\Gamma}=\cS_{n,0}+j_{n+1}\Gamma\begin{bmatrix}D_{\Gamma_{2n+1}}&-\Gamma^*_{2n+1}
\end{bmatrix}P_{\cH_{n+1}},
\end{equation}
\begin{equation}
\label{wtn} \wt \cS_{n,\Gamma}=\wt \cS_{n,0}+\wt
j_{n+1}\begin{bmatrix}D_{\Gamma^*_{2n+1}}\cr-\Gamma^*_{2n+1}
\end{bmatrix}\Gamma P_{\sD_{\Gamma_{2n+1}}},
\end{equation}
where $j_{n+1}$ is the embedding operator from
$\sD_{\Gamma^*_{2n+1}}$ into $\cK_{n}$, $\wt j_{n+1}$ is the
embedding operator from $\wt\cH_{n+1}$ into $\wt \cK_{n}$. Observe
that the block operator matrices $\cS_{n,0}$ and $\wt \cS_{n,0}$ can
be obtained from truncated CMV matrices $\cT_0$ and $\wt\cT_0$
corresponding to the infinite choice sequence
\[
\Gamma_0,\Gamma_1,\ldots,\Gamma_{2n},\Gamma_{2n+1},0,0,\ldots,
\]
where $0\in\bL(\sD_{\Gamma_{2n+1}},\sD_{\Gamma^*_{2n+1}})$. Let
\begin{equation}
\label{wno}
\cW_{n,0}=-\Gamma^*_0\oplus\bigoplus\limits_{k=1}^{n}{\bf
J}_{\Gamma_{2k}}\oplus 0:\wt\cK_n\to\cK_n.
\end{equation}
Due to \eqref{V01}, \eqref{wno}, and \eqref{prod1}, the operators
$\cS_{n,0}$ and $\wt \cS_{n,0}$ admit factorizations
\begin{equation}
\label{prdtn}
\begin{array}{l}
\cS_{n,0}=\cW_{n,0}\cV_n=\left(-\Gamma^*_0\oplus\bigoplus\limits_{k=1}^{n}{\bf
J}_{\Gamma_{2k}}\oplus 0\right)\times \left(
\bigoplus\limits_{k=1}^{n+1}{\bf J}_{\Gamma_{2k-1}}\right),\\
\wt \cS_{n,0}=\cV_n\cW_{n,0}= \left(
\bigoplus\limits_{k=1}^{n+1}{\bf J}_{\Gamma_{2k-1}}\right)\times
\left(-\Gamma^*_0\oplus\bigoplus\limits_{k=1}^{n}{\bf
J}_{\Gamma_{2k}}\oplus 0\right).
\end{array}
\end{equation}
Notice that
\[
P_{\sD_{\Gamma^*_{2n+1}}}\cS_{n,0}=0,\; \wt
\cS_{n,0}\uphar\sD_{\Gamma_{2n+1}}=0.
\]
 Our next goal is to express for $z\in\dD$: \begin{enumerate}
\item the resolvent
$\left(I_{\cK_{n}}-z\cS_{n,\Gamma}\right)^{-1}$ through the
resolvent $\left(I_{\cK_{n}}-z\cS_{n,0}\right)^{-1}$,
\item
 the resolvent
$\left(I_{\wt \cK_{n}}-z\wt \cS_{n,\Gamma}\right)^{-1}$ through
$\left(I_{\wt \cK_{n}}-z\wt \cS_{n,0}\right)^{-1}$.
\end{enumerate}
\begin{proposition} If $|z|<1$, then
\label{resform}
%\[
%\begin{array}{l}
\begin{multline*}
\left(I_{\cK_{n}}-z\cS_{n,\Gamma}\right)^{-1}=\left(I_{\cK_{n}}-z\cS_{n,0}\right)^{-1}
+z\left(\left(I_{\cK_{n}}-z\cS_{n,0}\right)^{-1}\uphar\sD_{\Gamma^*_{2n+1}}\right)\times\\
\left(I_{\sD_{\Gamma^*_{2n+1}}}-
z\Gamma\begin{bmatrix}D_{\Gamma_{2n+1}}&-\Gamma^*_{2n+1}\end{bmatrix}
\left(P_{\cH_{n+1}}\left(I_{\cK_{n}}-z\cS_{n,0}\right)^{-1}\uphar\sD_{\Gamma^*_{2n+1}}\right)\right)^{-1}\times\\
\Gamma\begin{bmatrix}D_{\Gamma_{2n+1}}&-\Gamma^*_{2n+1}\end{bmatrix}
P_{\cH_{n+1}}\left(I_{\cK_{n}}-z\cS_{n,0}\right)^{-1},
%\end{array}
%\]
\end{multline*}
%\[\begin{array}{l}
\begin{multline*}
\left(I_{\wt\cK_{n}}-z\wt \cS_{n,\Gamma}\right)^{-1}=\left(I_{\wt\cK_{n}}-z\wt \cS_{n,0}\right)^{-1}\\
+z\left(\left(I_{\wt\cK_{n}}-z\wt
\cS_{n,0}\right)^{-1}\uphar\wt\cH_{n+1}\right)\begin{bmatrix}D_{\Gamma^*_{2n+1}}\cr-\Gamma^*_{2n+1}
\end{bmatrix}\Gamma\times\\
\left(I_{\sD_{\Gamma_{2n+1}}}-z\left(P_{\sD_{\Gamma_{2n+1}}}
\left(I_{\wt\cK_{n}}-z\wt
\cS_{n,0}\right)^{-1}\uphar\wt\cH_{n+1}\right)
\begin{bmatrix}D_{\Gamma^*_{2n+1}}\cr-\Gamma^*_{2n+1}
\end{bmatrix}\Gamma\right)^{-1}\\
 P_{
\sD_{\Gamma_{2n+1}}}\left(I_{\wt\cK_{n}}-z\wt \cS_{n,0}\right)^{-1}.
%\end{array}\]
\end{multline*}
\end{proposition}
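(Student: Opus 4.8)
The plan is to derive both resolvent identities from a single algebraic fact---the push-through identity $(I-AB)^{-1}=I+A(I-BA)^{-1}B$---applied in turn to the bordered representations \eqref{tn} and \eqref{wtn} of $\cS_{n,\Gamma}$ and $\wt\cS_{n,\Gamma}$.

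First I would record that all four operators occurring in the two identities are boundedly invertible for $|z|<1$. Indeed, by Proposition~\ref{new1} (with parameter $\Gamma$, respectively $0$) the operators $\cS_{n,\Gamma}$ and $\cS_{n,0}$ are contractions in $\cK_n$, and $\wt\cS_{n,\Gamma}$, $\wt\cS_{n,0}$ are contractions in $\wt\cK_n$; hence $I_{\cK_n}-z\cS_{n,0}$, $I_{\cK_n}-z\cS_{n,\Gamma}$, $I_{\wt\cK_n}-z\wt\cS_{n,0}$ and $I_{\wt\cK_n}-z\wt\cS_{n,\Gamma}$ are all invertible whenever $|z|<1$.

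Then I would rewrite \eqref{tn} as $\cS_{n,\Gamma}=\cS_{n,0}+\Phi\Psi$, where $\Phi:=j_{n+1}\in\bL(\sD_{\Gamma^*_{2n+1}},\cK_n)$ is the inclusion of the second summand of $\cH_{n+1}$ into $\cK_n$ and $\Psi:=\Gamma\begin{bmatrix}D_{\Gamma_{2n+1}}&-\Gamma^*_{2n+1}\end{bmatrix}P_{\cH_{n+1}}\in\bL(\cK_n,\sD_{\Gamma^*_{2n+1}})$; put $X:=(I_{\cK_n}-z\cS_{n,0})^{-1}\Phi$. The factorization $I_{\cK_n}-z\cS_{n,\Gamma}=(I_{\cK_n}-z\cS_{n,0})(I_{\cK_n}-zX\Psi)$ shows that $I_{\cK_n}-zX\Psi$ is invertible, and the push-through identity then shows that $I_{\sD_{\Gamma^*_{2n+1}}}-z\Psi X$ is invertible with
\[
(I_{\cK_n}-z\cS_{n,\Gamma})^{-1}=(I_{\cK_n}-z\cS_{n,0})^{-1}+zX(I_{\sD_{\Gamma^*_{2n+1}}}-z\Psi X)^{-1}\Psi(I_{\cK_n}-z\cS_{n,0})^{-1}.
\]
It then remains to unfold $X=(I_{\cK_n}-z\cS_{n,0})^{-1}\uphar\sD_{\Gamma^*_{2n+1}}$, $\Psi X=\Gamma\begin{bmatrix}D_{\Gamma_{2n+1}}&-\Gamma^*_{2n+1}\end{bmatrix}(P_{\cH_{n+1}}(I_{\cK_n}-z\cS_{n,0})^{-1}\uphar\sD_{\Gamma^*_{2n+1}})$ and $\Psi(I_{\cK_n}-z\cS_{n,0})^{-1}=\Gamma\begin{bmatrix}D_{\Gamma_{2n+1}}&-\Gamma^*_{2n+1}\end{bmatrix}P_{\cH_{n+1}}(I_{\cK_n}-z\cS_{n,0})^{-1}$, which turns the display into the first asserted formula. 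The second formula follows by the same computation from \eqref{wtn}, this time with $\wt\Phi:=\wt j_{n+1}\begin{bmatrix}D_{\Gamma^*_{2n+1}}\cr-\Gamma^*_{2n+1}\end{bmatrix}\Gamma\in\bL(\sD_{\Gamma_{2n+1}},\wt\cK_n)$ and $\wt\Psi:=P_{\sD_{\Gamma_{2n+1}}}\in\bL(\wt\cK_n,\sD_{\Gamma_{2n+1}})$, so that $\wt\cS_{n,\Gamma}=\wt\cS_{n,0}+\wt\Phi\wt\Psi$ and $\wt X:=(I_{\wt\cK_n}-z\wt\cS_{n,0})^{-1}\wt\Phi=((I_{\wt\cK_n}-z\wt\cS_{n,0})^{-1}\uphar\wt\cH_{n+1})\begin{bmatrix}D_{\Gamma^*_{2n+1}}\cr-\Gamma^*_{2n+1}\end{bmatrix}\Gamma$; here the ``inner'' space is $\sD_{\Gamma_{2n+1}}$ rather than $\sD_{\Gamma^*_{2n+1}}$.

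I do not expect a genuine obstacle: the mathematical content is the one-line push-through identity, applied twice. The only points requiring attention are bookkeeping---keeping the domains and codomains of $\Phi,\Psi,\wt\Phi,\wt\Psi$, of the inclusions $j_{n+1},\wt j_{n+1}$, and of the projections and restrictions consistent with the decompositions $\cK_n=\bigoplus_{k=1}^{n+1}\cH_k$, $\wt\cK_n=\bigoplus_{k=1}^{n+1}\wt\cH_k$, $\cH_{n+1}=\sD_{\Gamma_{2n}}\oplus\sD_{\Gamma^*_{2n+1}}$, $\wt\cH_{n+1}=\sD_{\Gamma^*_{2n}}\oplus\sD_{\Gamma_{2n+1}}$---and noting that the ``Schur-complement'' factor $I_{\sD_{\Gamma^*_{2n+1}}}-z\Psi X$ (and its tilde analogue) is invertible not by a separate hypothesis but as a consequence of the factorization of $I_{\cK_n}-z\cS_{n,\Gamma}$ together with the contractivity supplied by Proposition~\ref{new1}.
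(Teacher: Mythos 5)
Your proposal is correct and follows essentially the same route as the paper: the author also treats $\cS_{n,\Gamma}$ (resp.\ $\wt\cS_{n,\Gamma}$) as the perturbation of $\cS_{n,0}$ (resp.\ $\wt\cS_{n,0}$) given by \eqref{tn} and \eqref{wtn}, derives the abstract identity $(I+A)^{-1}=(I+B)^{-1}-(I+B)^{-1}S\left(I_\sN+P_\sN(I+B)^{-1}S\right)^{-1}P_\sN(I+B)^{-1}$ for $S=A-B$ with range or kernel conditions relative to a subspace $\sN$, and obtains invertibility of the compressed middle factor from the factorization $I+A=(I+B)(I+(I+B)^{-1}S)$ exactly as you do. Your explicit factorization $S=\Phi\Psi$ together with the push-through identity is just a slightly more streamlined packaging of the same computation.
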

\begin{proof}
Let $A$ and $B$ be bounded operators in the Hilbert space $H$.
Suppose that $-1\in\rho(A)\cap\rho(B)$. Denote $S:=A-B$, i.e.,
\[
A=B+S.
\]
Then
\[
I+A=(I+B)(I+(I+B)^{-1}S),
\]
where $I=I_H$. It follows that
$\left(I+(I+B)^{-1}S\right)^{-1}\in\bL(H)$ and
\[
(I+A)^{-1}=\left(I+(I+B)^{-1}S\right)^{-1}(I+B)^{-1}.
\]
On the other hand
\[
(I+A)^{-1}-(I+B)^{-1}=-(I+B)^{-1}S(I+A)^{-1}.
\]
Hence
\[
(I+A)^{-1}=(I+B)^{-1}-(I+B)^{-1}S\left(I+(I+B)^{-1}S\right)^{-1}(I+B)^{-1}.
\]
Similarly
\[
(I+A)^{-1}=(I+B)^{-1}-(I+B)^{-1}\left(I+S(I+B)^{-1}\right)^{-1}S(I+B)^{-1}.
\]
 If $\sN$ is a proper subspace in $H$ and $\ran
S\subseteq\sN$, then %,
\[
(I+A)^{-1}=(I+B)^{-1}-(I+B)^{-1}\left(I_\sN+S(I+B)^{-1}\uphar\sN\right)
^{-1}S(I+B)^{-1}
\]
If $\ker S\supseteq H\ominus \sN$, then $S=SP_\sN$ and
\[
(I+A)^{-1}=(I+B)^{-1}-(I+B)^{-1}S\left(I_\sN+P_\sN(I+B)^{-1}
S\right)^{-1}P_{\sN}(I+B)^{-1}.
\]

Applying the latter equalities to $A=-z\cS_{n,\Gamma}$ and $A=-z\wt
\cS_{n,\Gamma}$, $B=-z\cS_{n,0}$ and $B=-z\wt \cS_{n,0}$ and using
\eqref{tn} and \eqref{wtn} we get formulas in the proposition.
\end{proof}

As it is well-known, the resolvent $R_T(\lambda)=(T-\lambda I)^{-1}$  of a block operator matrix  $%\[
T=\begin{pmatrix} B&D\cr
C&A\end{pmatrix}:\begin{array}{l}\sN\\\oplus\\\cH\end{array}\to
\begin{array}{l}\sN\\\oplus\\\cH\end{array}
$%\]
takes the form (the Schur-Frobenius formula)
\begin{equation}
\label{Sh-Frob}
\begin{array}{l}
R_T(\lambda)=
\begin{pmatrix}-V^{-1}(\lambda)&V^{-1}(\lambda)DR_A(\lambda)\cr
R_A(\lambda)CV^{-1}(\lambda)&R_A(\lambda)\left(I_\cH-CV^{-1}(\lambda)DR_A(\lambda)\right)
\end{pmatrix}\\
\qquad\mbox{for}\quad \lambda\in\rho(T)\cap\rho(A),
\end{array}
\end{equation}
where
 $$V(\lambda):=\lambda I_\sN-B+DR_A(\lambda)C,\; \lambda\in\rho(A).$$
 and $I=I_{\sN\oplus\cH}$.
It follows that
\[
P_\sN R_T(\lambda)\uphar\sN=-(\lambda I_\sN-B+DR_A(\lambda)C)^{-1}
\]
and
\[
zP_\sN(I-z T)^{-1}\uphar\sN=\left(I_\sN-z\left(B+zD(I_\cH-z
A)^{-1}C\right)\right)^{-1}.
\]
The next statement plays an essential role in the sequel.

\begin{theorem}
 \label{formula1}
Let $\Theta\in{\bS}(\sM,\sN)$ and let $\{\Gamma_k\}_{k\ge 0}$ be its
Schur parameters. Suppose that $\sD_{\Gamma_{2n+1}}\ne \{0\}$ and
$\sD_{\Gamma^*_{2n+1}}\ne \{0\}$. Then for each $z\in\dD$ one has
\begin{equation}
\label{schfr1} P_{\cK_n}\left(I_\sH-z\cT_0\right)^{-1}\uphar\cK_n=
\left(I_{\cK_n}-z\cS_{n,\Theta_{2n+2}(z)}\right)^{-1}\uphar\cK_n,
\end{equation}
\begin{equation}
\label{schfr2}
P_{\wt\cK_n}\left(I_{\wt\sH}-z\wt\cT_0\right)^{-1}\uphar\wt\cK_n=\left(I_{\wt\cK_n}-z\wt
\cS_{n,\Theta_{2n+2}(z)}\right)^{-1}\uphar\wt\cK_n,
\end{equation}
where $\Theta_{2n+2}$ is the function associated with $\Theta$ in
accordance with the Schur algorithm.
\end{theorem}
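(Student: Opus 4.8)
The plan is to obtain $\cS_{n,\Theta_{2n+2}(z)}$ from the truncated CMV matrix $\cT_0$ by compressing to $\cK_n$ and absorbing the infinite tail $\cK_n^{\perp}:=\sH_0\ominus\cK_n=\cH_{n+2}\oplus\cH_{n+3}\oplus\cdots$ into a $z$-dependent feedback term; the tail will turn out to carry precisely the CMV realization of the associated function $\Theta_{2n+2}$, which is why $\Gamma_{2n+2}$ gets replaced by $\Theta_{2n+2}(z)$.

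First I would split $\cT_0$ along $\sH_0=\cK_n\oplus\cK_n^{\perp}$. Since $\cT_0$ is block three-diagonal in $\sH_0=\bigoplus_{k\ge1}\cH_k$ with the blocks $\cA_k,\cB_k,\cC_k$ of \eqref{BLOKIT}, and $\cK_n=\cH_1\oplus\cdots\oplus\cH_{n+1}$, one gets
\[
\cT_0=\begin{bmatrix}\cS_{n,\Gamma_{2n+2}}&C\\ A&D\end{bmatrix}\colon \cK_n\oplus\cK_n^{\perp}\to\cK_n\oplus\cK_n^{\perp},
\]
where $\cS_{n,\Gamma_{2n+2}}=P_{\cK_n}\cT_0\uphar\cK_n$ is the three-diagonal matrix with diagonal blocks $\cB_1,\dots,\cB_{n+1}$ (its last block carrying the genuine $\Gamma_{2n+2}$); by three-diagonality $\cK_n$ and $\cK_n^{\perp}$ couple only through $\cA_{n+1}\colon\cH_{n+1}\to\cH_{n+2}$ and $\cC_{n+1}\colon\cH_{n+2}\to\cH_{n+1}$, so $A=P_{\cK_n^{\perp}}\cT_0\uphar\cK_n$ and $C=P_{\cK_n}\cT_0\uphar\cK_n^{\perp}$ are carried by the single nonzero rows of these blocks; and, computing the entries from \eqref{BLOKIT}, the complementary compression $D=P_{\cK_n^{\perp}}\cT_0\uphar\cK_n^{\perp}$ coincides, under the natural identification $\cH_{n+1+k}\leftrightarrow\cH_k$, with the truncated CMV matrix $\cT_0(\{\Gamma_{2n+2+j}\}_{j\ge0})$ built from the Schur parameters $\Gamma_{2n+2},\Gamma_{2n+3},\dots$ of $\Theta_{2n+2}$; in particular $\cK_n^{\perp}=\sH_0(\{\Gamma_{2n+2+j}\}_{j\ge0})$.

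Next, for $z\in\dD\setminus\{0\}$ (the case $z=0$ is trivial, both sides of \eqref{schfr1} being $I_{\cK_n}$), the contractions $\cT_0$ and $D$ make $I_{\cK_n^{\perp}}-zD$ invertible, and a Schur-complement computation based on \eqref{Sh-Frob} gives
\[
P_{\cK_n}(I_\sH-z\cT_0)^{-1}\uphar\cK_n=\bigl(I_{\cK_n}-z\bigl(\cS_{n,\Gamma_{2n+2}}+zC(I_{\cK_n^{\perp}}-zD)^{-1}A\bigr)\bigr)^{-1}.
\]
Hence \eqref{schfr1} reduces to $\cS_{n,\Gamma_{2n+2}}+zC(I_{\cK_n^{\perp}}-zD)^{-1}A=\cS_{n,\Theta_{2n+2}(z)}$. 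To prove this I would read off from \eqref{BLOKIT} the factorizations $A=\cF_0(\{G_j\})\,E$ and $C=j_{n+1}\,\cG_0(\{G_j\})$, where $\{G_j\}_{j\ge0}:=\{\Gamma_{2n+2+j}\}_{j\ge0}$, $E:=\begin{bmatrix}D_{\Gamma_{2n+1}}&-\Gamma^*_{2n+1}\end{bmatrix}P_{\cH_{n+1}}$, $j_{n+1}$ is the embedding of \eqref{tn}, and $\cF_0(\{G_j\})=\iota\,D_{\Gamma_{2n+2}}$, $\cG_0(\{G_j\})=D_{\Gamma^*_{2n+2}}\begin{bmatrix}\Gamma_{2n+3}&D_{\Gamma^*_{2n+3}}\end{bmatrix}P_{\cH_{n+2}}$ are the input and output operators of the conservative CMV realization of $\Theta_{2n+2}$ with state operator $D=\cT_0(\{G_j\})$ ($\iota$ being the embedding of $\sD_{\Gamma_{2n+2}}=\sD_{G_0}$ onto the first summand of $\cH_{n+2}=\cH_1$ for $\{G_j\}$); checking that \eqref{BLOKIT} really produces these factorizations is exactly the identification made in the first step. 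By \eqref{charfunc1} (equivalently Theorem \ref{cmvmod}) applied to $\Theta_{2n+2}$ one has $z\,\cG_0(\{G_j\})(I_{\cK_n^{\perp}}-zD)^{-1}\cF_0(\{G_j\})=\Theta_{2n+2}(z)-\Gamma_{2n+2}$, hence $zC(I_{\cK_n^{\perp}}-zD)^{-1}A=j_{n+1}\bigl(\Theta_{2n+2}(z)-\Gamma_{2n+2}\bigr)E$; adding $\cS_{n,\Gamma_{2n+2}}=\cS_{n,0}+j_{n+1}\Gamma_{2n+2}E$ (by \eqref{tn}) and using \eqref{tn} once more gives $\cS_{n,0}+j_{n+1}\Theta_{2n+2}(z)E=\cS_{n,\Theta_{2n+2}(z)}$, which proves \eqref{schfr1}.

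Finally, \eqref{schfr2} would follow by conjugating \eqref{schfr1} with the unitary $\cV_n$ of \eqref{VN}: it is the restriction of $\cV_0$ to $\cK_n$, maps $\cK_n$ onto $\wt\cK_n$, intertwines $\cT_0$ with $\wt\cT_0$ (hence their compressed resolvents), and by Propositions \ref{contrunc} and \ref{new1} intertwines $\cS_{n,\Gamma}$ with $\wt\cS_{n,\Gamma}$ for every admissible contraction $\Gamma$; specializing to $\Gamma=\Theta_{2n+2}(z)$ yields \eqref{schfr2}. The main obstacle is the bookkeeping underlying the first step: one must verify, purely from \eqref{BLOKIT}, that $\cK_n$ and $\cK_n^{\perp}$ couple only through $\cA_{n+1}$ and $\cC_{n+1}$, that $D$ is literally the truncated CMV matrix of the shifted parameter sequence (so that \eqref{charfunc1} can be invoked for $\Theta_{2n+2}$), and that the nonzero rows of $A$ and $C$ match the input/output operators $\cF_0,\cG_0$ of that realization. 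If some $\Gamma_k$ with $k>2n+1$ happens to be an isometry or a co-isometry one substitutes the degenerate CMV blocks of Appendix A and the argument is unchanged; note that the hypotheses $\sD_{\Gamma_{2n+1}}\ne\{0\}$, $\sD_{\Gamma^*_{2n+1}}\ne\{0\}$ already force $\sD_{\Gamma_k}\ne\{0\}$ and $\sD_{\Gamma^*_k}\ne\{0\}$ for all $k\le 2n+1$.
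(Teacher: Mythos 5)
Your proof is correct and follows essentially the same route as the paper: decompose $\cT_0$ along $\sH_0=\cK_n\oplus\cK_n^{\perp}$, recognize the lower-right corner as the truncated CMV matrix of the shifted parameter sequence $\{\Gamma_{2n+2+j}\}_{j\ge 0}$, and use the Schur--Frobenius formula together with the CMV realization \eqref{charfunc1} of $\Theta_{2n+2}$ to identify the $z$-dependent feedback term, turning $\cS_n$ into $\cS_{n,\Theta_{2n+2}(z)}$. The only (harmless) deviation is that you derive \eqref{schfr2} by conjugating \eqref{schfr1} with the unitary $\cV_n$ rather than rerunning the Schur--Frobenius argument for $\wt\cT_0$, and, like the paper itself, you leave the degenerate isometric/co-isometric cases to a routine substitution of the Appendix A blocks.
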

\begin{proof}
The CMV matrices $\cT_0$ and $\wt \cT_0$ can be represented as
follows
\begin{equation}
\label{matrrep} \cT_0=\begin{bmatrix}\cS_n&\cQ_n\cr\cD_n& \cT'
\end{bmatrix}:\begin{array}{l}\cK_n\\
\oplus\\\sH'\end{array}\to\begin{array}{l}\cK_n\\
\oplus\\\sH'\end{array},\;\wt\cT_0=\begin{bmatrix}\wt
\cS_n&\wt\cQ_n\cr\wt\cD_n& \wt\cT'
\end{bmatrix}:\begin{array}{l}\wt\cK_n\\
\oplus\\\wt\sH'\end{array}\to\begin{array}{l}\wt\cK_n\\
\oplus\\\wt\sH'\end{array},
\end{equation}
where
\[
\begin{array}{l}\sH'=\sH'\left(\{\Gamma_k\}_{k\ge
2n+2}\right)= \sH_0\ominus\cK_n,\\
\wt\sH'= \wt\sH'\left(\{\Gamma_k\}_{k\ge 2n+2}\right)= \wt
\sH_0\ominus\wt \cK_n,
\end{array}
\]
and
\[
%\begin{array}{l}
 \cT'=\cT_0(\{\Gamma_{k}\}_{k\ge2n+2}),\;
\wt\cT'=\wt\cT_0(\{\Gamma_{k}\}_{k\ge2n+2})
%\end{array}
\]
 are truncated CMV
matrices corresponding to the CMV matrices
$$\cU_{2n+2}=\cU_{2n+2}\left(\{\Gamma_{k}\}_{k\ge2n+2}\right)\;\mbox{and}\quad
\wt \cU_{2n+2}=\wt
\cU_{2n+2}\left(\{\Gamma_{k}\}_{k\ge2n+2}\right).$$ In order to
prove this theorem it is necessary to consider the following cases
%\begin{itemize}
1) $\Gamma_{2n+2}$ and $\Gamma_{2n+3}$ are neither isometric nor
 co-isometric, %\item
2) $\Gamma_{2n+3}$ is isometric, 3) $\Gamma_{2n+3}$ is co-isometric,
%\item
4) $\Gamma_{2n+3}$ is unitary,
%\item
5) $\Gamma_{2n+2}$ is isometric,
%\item
6) $\Gamma_{2n+2}$ is co-isometric,
%\item
7) $\Gamma_{2n+2}$ is unitary.
%\end{itemize}
We consider only the cases 1), 2), and 6) leaving the rest for a
reader.

{\bf The operators $\Gamma_{2n+2}$ and $\Gamma_{2n+3}$ are neither
isometric nor  co-isometric}. In this case the entries in
\eqref{matrrep} take the form
\[
\cD_n=\begin{bmatrix}0&0&\ldots&0& \cA_{n+1}\cr
 0&0&\ldots&0&0\cr\vdots&\vdots&\vdots&\vdots&\vdots
\end{bmatrix}:\cK_n\to\sH',\; \cQ_n=\begin{bmatrix}0&0&\ldots\cr
\vdots&\vdots&\vdots\cr 0&0&\ldots\cr  \cC_{n+1}&0&\ldots
\end{bmatrix}:\sH'\to\cK_n
\]
\[
\wt \cD_n=\begin{bmatrix}0&0&\ldots&0& \wt \cA_{n+1}\cr
 0&0&\ldots&0&0\cr\vdots&\vdots&\vdots&\vdots&\vdots
\end{bmatrix}:\wt \cK_n\to\wt\sH',\; \wt\cQ_n=\begin{bmatrix}0&0&\ldots\cr
\vdots&\vdots&\vdots\cr 0&0&\ldots\cr  \wt\cC_{n+1}&0&\ldots
\end{bmatrix}:\wt\sH'\to\wt\cK_n
\]

 Observe that from \eqref{charfunc1} follows the
equality
\begin{multline*}
\Theta_{2n+2}(z)=\Biggl(\Gamma_{2n+2}+zD_{\Gamma^*_{2n+2}}\begin{bmatrix}\Gamma_{2n+3}&D_{\Gamma^*_{2n+3}}
\end{bmatrix}\times\\
\left(P_{\cH_{n+2}}\left(I_\sH-z\cT'\right)^{-1}\uphar\sD_{\Gamma_{2n+2}}\right)%\begin{bmatrix}
D_{\Gamma_{2n+2}}\Biggr)\uphar\sD_{\Gamma_{2n+1}}.%\cr0\end{bmatrix}
\end{multline*}

From the Schur -Frobenius formula \eqref{Sh-Frob} we have
\[
P_{\cK_n}\left(I_\sH-z\cT_0\right)^{-1}\uphar\cK_n=\left(I_{\cK_n}-z\left(\cS_n+z
\cQ_n\left(I_{\sH'}-z\cT'\right)^{-1}\cD_n\right)\right)^{-1}.
\]
Since
\[
\begin{array}{l}
\cC_{n+1}=\begin{bmatrix}0&0\cr
D_{\Gamma^*_{2n+2}}\Gamma_{2n+3}&D_{\Gamma^*_{2n+2}}D_{\Gamma^*_{2n+3}}\end{bmatrix}:
\begin{array}{l}\sD_{\Gamma_{2n+2}}\\\oplus\\\sD_{\Gamma^*_{2n+3}}\end{array}=\cH_{n+2}
\to\begin{array}{l}\sD_{\Gamma_{2n}}\\\oplus\\\sD_{\Gamma^*_{2n+1}}\end{array}=\cH_{n+1},\\
\cA_{n+1}=\begin{bmatrix}D_{\Gamma_{2n+2}}D_{\Gamma_{2n+1}}&
-D_{\Gamma_{2n+2}}\Gamma^*_{2n+1}\cr
0&0\end{bmatrix}:\begin{array}{l}\sD_{\Gamma_{2n}}\\\oplus\\\sD_{\Gamma^*_{2n+1}}\end{array}=\cH_{n+1}
\to\begin{array}{l}\sD_{\Gamma_{2n+2}}\\\oplus\\\sD_{\Gamma^*_{2n+3}}\end{array}=\cH_{n+2},
\end{array}
\]
we get
%\[\begin{array}{l}
\begin{multline*}
\cQ_n\left(I_{\sH'}-z\cT'\right)^{-1}\cD_n\\
\qquad=
D_{\Gamma^*_{2n+2}}\begin{bmatrix}\Gamma_{2n+3}&D_{\Gamma^*_{2n+3}}\end{bmatrix}
\left(P_{\cH_{n+2}}\left(I_{\sH'}-z\cT'\right)^{-1}\uphar\sD_{\Gamma_{2n+2}}\right)\times\\
D_{\Gamma_{2n+2}}%\cr0\end{bmatrix}
%\left(
\begin{bmatrix}D_{\Gamma_{2n+1}}
-\Gamma^*_{2n+1}\end{bmatrix}P_{\cH_{n+1}},
%\end{array}\]
\end{multline*}
%\[\begin{array}{l}
\begin{multline*}
\cS_n+z\cQ_n\left(I_{\sH'}-z\cT'\right)^{-1}\cD_n=\cS_{n,0}+
\Gamma_{2n+2}\begin{bmatrix}%0&0\cr
D_{\Gamma_{2n+1}}&
-{\Gamma^*_{2n+1}}\end{bmatrix}P_{\cH_{n+1}}\\
+ zD_{\Gamma^*_{2n+2}}\begin{bmatrix}%0&0\cr
\Gamma_{2n+3}&D_{\Gamma^*_{2n+3}}\end{bmatrix}
\left(P_{\cH_{n+2}}\left(I_{\sH'}-z\cT'\right)^{-1}\uphar\sD_{\Gamma_{2n+2}}\right)\times\\
\qquad D_{\Gamma_{2n+2}}%\cr0\end{bmatrix}
\begin{bmatrix}D_{\Gamma_{2n+1}}
-\Gamma^*_{2n+1}\end{bmatrix}P_{\cH_{n+1}}\\
\qquad\qquad=\cS_{n,0}+
\Theta_{2n+2}(z)\begin{bmatrix}D_{\Gamma_{2n+1}}&-{\Gamma^*_{2n+1}}
\end{bmatrix}P_{\cH_{n+1}}=\cS_{n,\Theta_{2n+2}(z)}.\\
%\end{array}\]
\end{multline*}
Now using the Schur-Frobenius formula, we arrive at \eqref{schfr1}.
Similarly \eqref{schfr2} can be proved.

 {\bf The operator $\Gamma_{2n+3}$ is isometric}. In this case
$\sD_{\Gamma_{2n+3}}=0$. We will prove \eqref{schfr2}. One can see
that
\[
\wt\sH'=\sD_{\Gamma^*_{2n+2}}\oplus\sD_{\Gamma^*_{2n+3}}\oplus\sD_{\Gamma^*_{2n+3}}\oplus\cdots,
\]
and in the matrix representation \eqref{matrrep} the entries
$\wt\cD_n$, $\wt\cQ_n$, and $\wt\cT'$ take the form (see Appendix A)
\[
\wt \cD_n=\begin{bmatrix}0&0&\ldots&0&
\Gamma_{2n+3}D_{\Gamma_{2n+2}}\cr
 0&0&\ldots&0&0\cr\vdots&\vdots&\vdots&\vdots&\vdots
\end{bmatrix},\;
\wt\cQ_n=\begin{bmatrix}0&0&\ldots\cr \vdots&\vdots&\vdots\cr
0&0&\ldots\cr D_{\Gamma^*_{2n+1}}D_{\Gamma^*_{2n+2}}&0&\ldots \cr
-\Gamma^*_{2n+1}D_{\Gamma^*_{2n+2}}&0&\ldots
\end{bmatrix},
\]
\[
\wt\cT'=\wt\cT_0\left\{\Gamma_{k}\}_{k\ge
2n+2}\right)=\begin{bmatrix}
-\Gamma_{2n+3}\Gamma^*_{2n+2}&I_{\sD_{\Gamma^*_{2n+3}}}&0&0&0&\ldots\cr
0&0&I_{\sD_{\Gamma^*_{2n+3}}}&0&0&\ldots\cr
0&0&0&I_{\sD_{\Gamma^*_{2n+3}}}&0&\ldots\cr
%0&0&0&0&0&I_{\sD_{\Gamma^*_{2n+3}}}&0&0&\ldots \cr
\vdots&\vdots&\vdots&\vdots&\vdots&\vdots
\end{bmatrix}.
\]
We note that since $\Gamma_{2n+3}$ is isometric operator from
$\bL(\sD_{\Gamma_{2n+2}},\sD_{\Gamma^*_{2n+2}})$, the function
$\Theta_{2n+2}\in\bS(\sD_{\Gamma_{2n+1}},\sD_{\Gamma^*_{2n+1}})$ is
of the form
\[
\Theta_{2n+2}(z)=\Gamma_{2n+2}+zD_{\Gamma^*_{2n+2}}
\left(I_{\sD_{\Gamma^*_{2n+2}}}+z\Gamma_{2n+3}\Gamma^*_{2n+2}\right)^{-1}\Gamma_{2n+3}D_{\Gamma_{2n+2}}.
\]
The CMV matrix corresponding to
\[
\Gamma_{2n+2},\;\Gamma_{2n+3},\;
0\in\bL(0,\sD_{\Gamma^*_{2n+3}}),\ldots
\]
 is
\[
\wt\cU_0\left(\left\{\Gamma_{k}\}_{k\ge 2n+2}\right\}\right)=\begin{bmatrix}
\Gamma_{2n+2}&D_{\Gamma^*_{2n+2}}&0&0&0&0&\ldots\cr
\Gamma_{2n+3}D_{\Gamma_{2n+2}}&-\Gamma_{2n+3}\Gamma^*_{2n+2}&I_{\sD_{\Gamma^*_{2n+3}}}&0&0&0&\ldots\cr
0&0&0&I_{\sD_{\Gamma^*_{2n+3}}}&0&0&\ldots\cr
0&0&0&0&I_{\sD_{\Gamma^*_{2n+3}}}&0&\ldots\cr
%0&0&0&0&0&I_{\sD_{\Gamma^*_{2n+3}}}&0&0&\ldots \cr
\vdots&\vdots&\vdots&\vdots&\vdots&\vdots&\vdots
\end{bmatrix}.
\]
Hence, we get
\[
\Theta_{2n+2}(z)=\left(\Gamma_{2n+2}+zD_{\Gamma^*_{2n+2}}
\left(P_{\sD_{\Gamma^*_{2n+2}}}\left(I_{\wt\sH'}-z\wt\cT'\right)^{-1}\uphar\sD_{\Gamma^*_{2n+2}}\right)
\Gamma_{2n+3}D_{\Gamma_{2n+2}}\right)\uphar\sD_{\Gamma_{2n+1}},%\cr0\end{bmatrix}.
\]
%\[\begin{array}{l}
\begin{multline*}
\wt
\cS_n+z\wt\cQ_n\left(I_{\wt\sH'}-z\wt\cT'\right)^{-1}\wt\cD_n=\wt
\cS_{n,0}+
\begin{bmatrix}D_{\Gamma^*_{2n+1}}\cr
-{\Gamma^*_{2n+1}}\end{bmatrix}\Gamma_{2n+2}P_{\sD_{\Gamma_{2n+1}}}\\
+ z\begin{bmatrix}D_{\Gamma^*_{2n+1}}\cr
-{\Gamma^*_{2n+1}}\end{bmatrix}D_{\Gamma^*_{2n+2}}
\left(P_{\cD_{\Gamma^*_{2n+2}}}\left(I_{\wt\sH'}-z\wt\cT'\right)^{-1}\uphar\sD_{\Gamma^*_{2n+2}}\right)
\Gamma_{2n+3}D_{\Gamma_{2n+2}}P_{\sD_{\Gamma_{2n+1}}}\\
\qquad\qquad=\wt \cS_{n,0}+\begin{bmatrix}D_{\Gamma^*_{2n+1}}\cr
-{\Gamma^*_{2n+1}}\end{bmatrix}
\Theta_{2n+2}(z)P_{\sD_{\Gamma_{2n+1}}}=\wt
\cS_{n,\Theta_{2n+2}(z)}.
%\end{array}\]
\end{multline*}
{\bf The operator $\Gamma_{2n+2}$ is co-isometric}. Now
$\Theta_{2n+2}(z)=\Gamma_{2n+2}$ for all $z\in\dD$,
\[
\sH'=\sD_{\Gamma_{2n+2}}\oplus\sD_{\Gamma_{2n+2}}\oplus\ldots,
\]
$\cQ_n=0:\sH'\to\cK_n$. Therefore,
$\cQ_n\left(I_{\sH'}-z\cT'\right)^{-1}\cD_n=0$ and
\[\begin{array}{l}
\cS_n+z\cQ_n\left(I_{\sH'}-z\cT'\right)^{-1}\cD_n=\cS_{n,0}+
\Gamma_{2n+2}\begin{bmatrix}%0&0\cr
D_{\Gamma_{2n+1}}& -{\Gamma^*_{2n+1}}\end{bmatrix}P_{\cH_{n+1}}
=\cS_{n,\Theta_{2n+2}(z)}.
\end{array}
\]
\end{proof}
\subsection{Connection between $\Theta$ and
$\Theta_{2n+2}$}\label{conn11}

 Applying Proposition \ref{resform} for fixed $z\in\dD$ and
$\Gamma=\Theta_{2n+2}(z)$ we get
%\begin{equation}
\begin{multline}
\label{expr11}
%\begin{array}{l}
\left(I_{\cK_{n}}-z\cS_{n,\Theta_{2n+2}(z)}\right)^{-1}=\left(I_{\cK_{n}}-z\cS_{n,0}\right)^{-1}
+\\
z\left(\left(I_{\cK_{n}}-z\cS_{n,0}\right)^{-1}\uphar\sD_{\Gamma^*_{2n+1}}\right)\times\\
\left(I_{\sD_{\Gamma^*_{2n+1}}}-
z\Theta_{2n+2}(z)\begin{bmatrix}D_{\Gamma_{2n+1}}&-\Gamma^*_{2n+1}\end{bmatrix}%\\
\left(P_{\cH_{n+1}}\left(I_{\cK_{n}}-z\cS_{n,0}\right)^{-1}\uphar\sD_{\Gamma^*_{2n+1}}\right)\right)^{-1}\\
\times\Theta_{2n+2}(z)\begin{bmatrix}D_{\Gamma_{2n+1}}&-\Gamma^*_{2n+1}\end{bmatrix}
P_{\cH_{n+1}}\left(I_{\cK_{n}}-z\cS_{n,0}\right)^{-1},
%\end{array}
%\end{equation}
\end{multline}
%\begin{equation}
\begin{multline}
\label{expr22}
%\begin{array}{l}
\left(I_{\wt\cK_{n}}-z\wt \cS_{n,\Theta_{2n+2}(z)}\right)^{-1}=\left(I_{\wt\cK_{n}}-z\wt \cS_{n,0}\right)^{-1}\\
+z\left(\left(I_{\wt\cK_{n}}-z\wt
\cS_{n,0}\right)^{-1}\uphar\wt\cH_{n+1}\right)\begin{bmatrix}D_{\Gamma^*_{2n+1}}\cr-\Gamma^*_{2n+1}
\end{bmatrix}\Theta_{2n+2}(z)\times\\
\left(I_{\sD_{\Gamma_{2n+1}}}-z\left(P_{\sD_{\Gamma_{2n+1}}}
\left(I_{\wt\cK_{n}}-z\wt
\cS_{n,0}\right)^{-1}\uphar\wt\cH_{n+1}\right)
\begin{bmatrix}D_{\Gamma^*_{2n+1}}\cr-\Gamma^*_{2n+1}
\end{bmatrix}\Theta_{2n+2}(z)\right)^{-1}\times\\
P_{\sD_{\Gamma_{2n+1}}}\left(I_{\wt\cK_{n}}-z\wt
\cS_{n,0}\right)^{-1}.
%\end{array}
%\end{equation}
\end{multline}
Define the following operator functions in $\dD$:
\begin{multline}%{equation}
\label{functt1} \left\{
\begin{array}{l}
\Theta^{(0)}_n(z):=\Gamma_0+zD_{\Gamma^*_0}\begin{bmatrix}\Gamma_1&D_{\Gamma^*_1}\end{bmatrix}
\left(P_{\cH_1}\left(I_{\cK_n}-z\cS_{n,0}\right)^{-1}\uphar\sD_{\Gamma_0}\right)D_{\Gamma_0}\\
\qquad\qquad\qquad\qquad\in\bL(\sM,\sN),\\
 A_n(z):=z\begin{bmatrix}D_{\Gamma_{2n+1}}&-\Gamma^*_{2n+1}
\end{bmatrix}\left(P_{\cH_{n+1}}\left(I_{\cK_n}-z\cS_{n,0}\right)^{-1}\uphar\sD_{\Gamma^*_{2n+1}}\right)\\
\qquad\qquad\qquad\qquad\in\bL(\sD_{\Gamma^*_{2n+1}},\sD_{\Gamma_{2n+1}}), \\
B_n(z):=z\begin{bmatrix}D_{\Gamma_{2n+1}}&-\Gamma^*_{2n+1}
\end{bmatrix}\left(P_{\cH_{n+1}}\left(I_{\cK_n}-z\cS_{n,0}\right)^{-1}\uphar\sD_{\Gamma_{0}}\right)D_{\Gamma_0}\\
\qquad\qquad\qquad\qquad\in\bL(\sM,\sD_{\Gamma_{2n+1}}),\\
C_n(z):=zD_{\Gamma^*_0}\begin{bmatrix}\Gamma_1&D_{\Gamma^*_1}\end{bmatrix}
\left(P_{\cH_1}\left(I_{\cK_n}-z\cS_{n,0}\right)^{-1}\uphar\sD_{\Gamma^*_{2n+1}}\right)\\
\qquad\qquad\qquad\qquad\in\bL(\sD_{\Gamma^*_{2n+1}},\sN),
\end{array}
\right.
%\end{equation}
\end{multline}
\begin{equation}
\label{cq1} \cQ_n(z):=\begin{bmatrix}\Theta^{(0)}_n(z)& C_n(z)\cr
B_n(z)&A_n(z)\end{bmatrix}:\begin{array}{l}\sM\\\oplus\\\sD_{\Gamma^*_{2n+1}}\end{array}\to
\begin{array}{l}\sN\\\oplus\\\sD_{\Gamma_{2n+1}}\end{array},
\end{equation}
\begin{equation}
\label{functt11} \left\{
\begin{array}{l}
\wt\Theta^{(0)}_n(z):=\Gamma_0+zD_{\Gamma^*_0}
\left(P_{\sD_{\Gamma^*_0}}\left(I_{\wt\cK_n}-z\wt
\cS_{n,0}\right)^{-1}\uphar\cH_1\right)\begin{bmatrix}\Gamma_1\cr
D_{\Gamma_1}\end{bmatrix} D_{\Gamma_0}\\
\qquad\qquad\qquad\qquad\in\bL(\sM,\sN), \\
\wt A_n(z):=z\left(P_{\sD_{\Gamma_{2n+1}}}\left(I_{\wt\cK_n}-z\wt
\cS_{n,0}\right)^{-1}\uphar\wt\cH_{n+1}\right)\begin{bmatrix}D_{\Gamma^*_{2n+1}}\cr-\Gamma^*_{2n+1}
\end{bmatrix}\\
\qquad\qquad\qquad\qquad\in\bL(\sD_{\Gamma^*_{2n+1}},\sD_{\Gamma_{2n+1}}),\\
\wt B_n(z):=z\left(P_{\sD_{\Gamma_{2n+1}}}\left(I_{\wt\cK_n}-z\wt
\cS_{n,0}\right)^{-1}\uphar\wt\cH_1\right)\begin{bmatrix}\Gamma_1\cr
D_{\Gamma_1}
\end{bmatrix}D_{\Gamma_0}\\
\qquad\qquad\qquad\qquad
\in\bL(\sM,\sD_{\Gamma_{2n+1}}), \\
\wt C_n(z):=zD_{\Gamma^*_0}
\left(P_{\sD_{\Gamma^*_0}}\left(I_{\wt\cK_n}-z\wt
\cS_{n,0}\right)^{-1}\uphar\wt\cH_{n+1}\right)\begin{bmatrix}D_{\Gamma_{2n+1}}\cr
-\Gamma^*_{2n+1}\end{bmatrix}\\
\qquad\qquad\qquad\qquad \in\bL(\sD_{\Gamma^*_{2n+1}},\sN),
\end{array}\right.
\end{equation}
\begin{equation}
\label{cq2} \wt\cQ_n(z):=\begin{bmatrix}\wt\Theta^{(0)}_n(z)& \wt
C_n(z)\cr \wt B_n(z)&\wt
A_n(z)\end{bmatrix}:\begin{array}{l}\sM\\\oplus\\\sD_{\Gamma^*_{2n+1}}\end{array}\to
\begin{array}{l}\sN\\\oplus\\\sD_{\Gamma_{2n+1}}\end{array}.
\end{equation}

Consider the following discrete time-invariant systems:
\[
\tau_n=\left\{\begin{bmatrix} N_n&M_n\cr L_n&\cS_{n,0}
\end{bmatrix};\;
\sM\oplus \sD_{\Gamma^*_{2n+1}},\; \sN\oplus\sD_{\Gamma_{2n+1}},\;
\cK_n\right\}
\]
and
\[
 \wt\tau_n=\left\{\begin{bmatrix} \wt N_n&\wt M_n\cr \wt
L_n&\wt
\cS_{n,0}\end{bmatrix};\;\sM\oplus\sD_{\Gamma^*_{2n+1}},\;\sN \oplus
\sD_{\Gamma_{2n+1}},\;\wt \cK_n\right\},
\]
where
\[
N_n=\wt N_n=\begin{bmatrix}\Gamma_0&0\cr 0&0 \end{bmatrix}:\begin{array}{l}\sM\\
\oplus\\
\sD_{\Gamma^*_{2n+1}}\end{array}\to \begin{array}{l}\sN\\
\oplus\\
\sD_{\Gamma_{2n+1}}\end{array},
\]
\[
M_n
f=\begin{bmatrix}D_{\Gamma^*_0}\Gamma_1&D_{\Gamma^*_0}D_{\Gamma^*_1}\end{bmatrix}P_{\cH_1}f\oplus
\begin{bmatrix}D_{\Gamma_{2n+1}}&-\Gamma^*_{2n+1}\end{bmatrix}P_{\cH_{n+1}}f\in
\begin{array}{l}\sN\\
\oplus\\
\sD_{\Gamma_{2n+1}}\end{array},\; f\in\cK_n,
\]
\[
L_n\vec \f=D_{\Gamma_0}P_{\sM}\vec\f+P_{\sD_{\Gamma^*_{2n+1}}}\vec\f
=\begin{bmatrix}D_{\Gamma_0}\f_1\cr 0\cr\vdots\cr 0\cr
\f_2\end{bmatrix}\in\cK_{n},\; \vec \f=\begin{bmatrix}\f_1\cr\f_2
\end{bmatrix},\; \f_1\in\sM,\; \f_2\in\sD_{\Gamma^*_{2n+1}},
\]
\[
\wt M_n f=D_{\Gamma^*_0}P_{\sD_{\Gamma^*_0}} f\oplus
P_{\sD_{\Gamma_{2n+1}}}f\in
\begin{array}{l}\sN\\
\oplus\\
\sD_{\Gamma_{2n+1}}\end{array},\; f\in\wt\cK_n,
\]
\[
\wt L_n\vec\f=\begin{bmatrix}{\Gamma_1}\cr
D_{\Gamma_1}\end{bmatrix}D_{\Gamma_0}\f_1\oplus
\begin{bmatrix}D_{\Gamma^*_{2n+1}}\cr-{\Gamma^*_{2n+1}}\end{bmatrix}\f_2\in\wt\cK_{n},\;
\vec \f=\begin{bmatrix}\f_1\cr\f_2
\end{bmatrix},\; \f_1\in\sM,\; \f_2\in\sD_{\Gamma^*_{2n+1}},
\]
Then from \eqref{functt1}, \eqref{cq1}, \eqref{functt11}, and
\eqref{cq2} it follows that $\cQ(z)$ and $\wt\cQ(z)$ are the
transfer functions of the systems $\tau_n$ and $\wt\tau_n,$
respectively.
\begin{proposition}\label{conssys} The discrete time-invariant syaytems $\tau_n$ and $\wt\tau_n$
are conservative and unitary equivalent. Therefore,
\[
\cQ_n=\wt\cQ_n\in\bS\left(\sM\oplus\sD_{\Gamma^*_{2n+1}},\sN\oplus\sD_{\Gamma_{2n+1}}\right).
%\begin{array}{l}\sM\\\oplus\\\sD_{\Gamma^*_{2n+1}}\end{array},
%\begin{array}{l}\sN\\\oplus\\\sD_{\Gamma_{2n+1}}\end{array}\right).
\]
\end{proposition}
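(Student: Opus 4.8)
The plan is to realise the system operator of $\tau_n$ as a product of two block-diagonal unitaries assembled from the elementary rotations ${\bf J}_{\Gamma_k}$, in exact analogy with the factorisations $\cU_0=\cL_0\cM_0$, $\wt\cU_0=\wt\cM_0\cL_0$ of the block operator CMV matrices in \eqref{defcmv}; conservativity, unitary equivalence and Schur-class membership then follow at once. Put $\Gamma_{2n+2}:=0\in\bL(\sD_{\Gamma_{2n+1}},\sD_{\Gamma^*_{2n+1}})$, so that the elementary rotation ${\bf J}_{\Gamma_{2n+2}}$ is simply the flip $\sD_{\Gamma_{2n+1}}\oplus\sD_{\Gamma^*_{2n+1}}\to\sD_{\Gamma^*_{2n+1}}\oplus\sD_{\Gamma_{2n+1}}$. (Recall that, in the setting of Proposition \ref{new1}, $\Gamma_0,\dots,\Gamma_{2n+1}$ are neither isometric nor co-isometric, so all the ${\bf J}_{\Gamma_k}$ used below are genuine elementary rotations.)

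First I would introduce
\[
\cM^{(n)}:=I_{\sM\oplus\sD_{\Gamma^*_{2n+1}}}\oplus\bigoplus_{k=1}^{n+1}{\bf J}_{\Gamma_{2k-1}},\qquad \cL^{(n)}:={\bf J}_{\Gamma_0}\oplus\Bigl(\bigoplus_{k=1}^{n}{\bf J}_{\Gamma_{2k}}\Bigr)\oplus{\bf J}_{\Gamma_{2n+2}},
\]
where in $\cM^{(n)}$ one takes ${\bf J}_{\Gamma_{2k-1}}:\cH_k\to\wt\cH_k$, so that $\cM^{(n)}$ maps $(\sM\oplus\sD_{\Gamma^*_{2n+1}})\oplus\cK_n$ unitarily onto $(\sM\oplus\sD_{\Gamma^*_{2n+1}})\oplus\wt\cK_n$ (indeed $\cM^{(n)}=I_{\sM\oplus\sD_{\Gamma^*_{2n+1}}}\oplus\cV_n$), and in $\cL^{(n)}$ one regroups the summands of $(\sM\oplus\sD_{\Gamma^*_{2n+1}})\oplus\wt\cK_n$ as
\[
[\sM\oplus\sD_{\Gamma^*_0}]\oplus\bigoplus_{k=1}^{n}[\sD_{\Gamma_{2k-1}}\oplus\sD_{\Gamma^*_{2k}}]\oplus[\sD_{\Gamma_{2n+1}}\oplus\sD_{\Gamma^*_{2n+1}}]
\]
and the summands of $(\sN\oplus\sD_{\Gamma_{2n+1}})\oplus\cK_n$ as $[\sN\oplus\sD_{\Gamma_0}]\oplus\bigoplus_{k=1}^{n}[\sD_{\Gamma^*_{2k-1}}\oplus\sD_{\Gamma_{2k}}]\oplus[\sD_{\Gamma^*_{2n+1}}\oplus\sD_{\Gamma_{2n+1}}]$, so that $\cL^{(n)}$ acts block-diagonally and is unitary. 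The heart of the argument is the verification of the operator identities
\[
\begin{bmatrix}N_n&M_n\cr L_n&\cS_{n,0}\end{bmatrix}=\cL^{(n)}\cM^{(n)},\qquad \begin{bmatrix}\wt N_n&\wt M_n\cr \wt L_n&\wt \cS_{n,0}\end{bmatrix}=\wt\cM^{(n)}\cL^{(n)},
\]
where $\wt\cM^{(n)}:=I_{\sN\oplus\sD_{\Gamma_{2n+1}}}\oplus\bigoplus_{k=1}^{n+1}{\bf J}_{\Gamma_{2k-1}}=I_{\sN\oplus\sD_{\Gamma_{2n+1}}}\oplus\cV_n$; these are obtained by multiplying out the right-hand sides and matching the blocks with \eqref{BLOKIT}, \eqref{BLOKIWT}, \eqref{prdtn} and the definitions of $N_n,M_n,L_n$ and their tildes. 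They are precisely the finite, boundary-closed versions of $\cU_0=\cL_0\cM_0$ and $\wt\cU_0=\wt\cM_0\cL_0$: the zero parameter $\Gamma_{2n+2}$ contributes the flip that promotes the copy of $\sD_{\Gamma^*_{2n+1}}$ inside $\cH_{n+1}$ to the input channel and creates the output channel $\sD_{\Gamma_{2n+1}}$, while the $(2,2)$-entries reproduce $\cS_{n,0}=\cW_{n,0}\cV_n$ and $\wt\cS_{n,0}=\cV_n\cW_{n,0}$.

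Granting these identities, the conclusions follow immediately. Since $\cM^{(n)},\cL^{(n)},\wt\cM^{(n)}$ are unitary (orthogonal sums of elementary rotations), the system operators of $\tau_n$ and $\wt\tau_n$ are unitary, i.e. both systems are conservative. Furthermore
\[
\wt\cM^{(n)}\begin{bmatrix}N_n&M_n\cr L_n&\cS_{n,0}\end{bmatrix}=\wt\cM^{(n)}\cL^{(n)}\cM^{(n)}=\begin{bmatrix}\wt N_n&\wt M_n\cr \wt L_n&\wt \cS_{n,0}\end{bmatrix}\cM^{(n)},
\]
and, since $\cM^{(n)}$ and $\wt\cM^{(n)}$ both act as the identity on the input and output spaces and as $\cV_n$ on the state spaces, this says exactly that $\tau_n$ and $\wt\tau_n$ are unitarily equivalent (the analogue of \eqref{MU}). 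Finally, the transfer function of a conservative system is a Schur class function, and unitarily equivalent systems have equal transfer functions \cite{A}; as $\cQ_n$ and $\wt\cQ_n$ have already been identified with the transfer functions of $\tau_n$ and $\wt\tau_n$ (the resolvents $(I-z\cS_{n,0})^{-1}$ and $(I-z\wt\cS_{n,0})^{-1}$ occurring in \eqref{functt1}, \eqref{functt11} exist for $z\in\dD$ because $\cS_{n,0},\wt\cS_{n,0}$ are contractions, e.g.\ by Proposition \ref{new1} with $\Gamma=0$), we obtain $\cQ_n=\wt\cQ_n\in\bS(\sM\oplus\sD_{\Gamma^*_{2n+1}},\sN\oplus\sD_{\Gamma_{2n+1}})$.

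I expect the block-matrix bookkeeping behind the two operator identities to be the only genuine work: one must keep track of the reindexing of the direct summands of $\cK_n$ and $\wt\cK_n$ together with the extra copies of $\sD_{\Gamma^*_{2n+1}}$ and $\sD_{\Gamma_{2n+1}}$, and check that the flip ${\bf J}_{\Gamma_{2n+2}}$ accounts exactly for the degenerate boundary blocks $\cB_{n+1,0}$, $\wt\cB_{n+1,0}$ of $\cS_{n,0},\wt\cS_{n,0}$ as well as for the entries of $N_n,M_n,L_n$ that involve $\sD_{\Gamma^*_{2n+1}}$ and $\sD_{\Gamma_{2n+1}}$. Everything else is purely formal, being inherited from the CMV computation of \eqref{defcmv}.
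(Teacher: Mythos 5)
Your proof is correct and follows essentially the same route as the paper: the paper deduces conservativity from the factorizations \eqref{prod1}, \eqref{VN}, \eqref{wno} and verifies the intertwining relations $\wt M_n\cV_n=M_n$, $\cV_nL_n=\wt L_n$, $\cV_n\cS_{n,0}=\wt\cS_{n,0}\cV_n$, which is exactly what your global factorizations $\Psi_n=\cL^{(n)}\cM^{(n)}$ and $\wt\cM^{(n)}\cL^{(n)}$ encode. Your version merely packages the same computation at the level of the full system operators (making the conservativity claim a bit more explicit), so it is the paper's argument in a slightly different organization.
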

\begin{proof}
The statements follow from definitions of $\tau_n$ and $\wt\tau_n$,
equalities \eqref{prod1}, \eqref{VN}, \eqref{wno}. One can verify
that
\[
\wt M_nV_n=M_n,\; V_n L_n=\wt L_n,\; V_n\cS_{n,0}=\wt \cS_{n,0} V_n,
\]
where $V_n$ is given by \eqref{VN}. This means that $\tau_n$ and
$\wt \tau_n$ are unitary equivalent.
\end{proof}
Proposition \ref{conssys} yields the equalities
\[
\Theta^{(0)}_n(z)=\wt\Theta^{(0)}_n(z),\; A_n(z)=\wt A_n(z),\;
B_n(z)=\wt B_n(z),\; C_n(z)=\wt C_n(z),\; z\in\dD.
\]
Since $B_n(0)=0$, $C_n(0)=0$, and $A_n(0)=0$, we get
\[
||B_n(z)||\le|z|, \; ||C_n(z)||\le|z|,\;||A_n(z)||\le|z|,\; z\in\dD.
\]
\begin{theorem}
\label{desr} Let $\Theta\in{\bS}(\sM,\sN)$ and let
$\{\Gamma_k\}_{k\ge 0}$ be its Schur parameters. Suppose that
$\sD_{\Gamma_{2n+1}}\ne \{0\}$ and $\sD_{\Gamma^*_{2n+1}}\ne \{0\}$
for some $n$. Then the functions $\Theta$ and $\Theta_{2n+2}$ are
connected by the relations
\[
\begin{array}{l}
\Theta(z)=\Theta_n^{(0)}(z)+C_n(z)\left(I_{\sD_{\Gamma^*_{2n+1}}}-\Theta_{2n+2}(z)A_n(z)\right)^{-1}
\Theta_{2n+2}(z)B_n(z)\\
=\Theta_n^{(0)}(z)+C_n(z)\Theta_{2n+2}(z)\left(I_{\sD_{\Gamma_{2n+1}}}-A_n(z)\Theta_{2n+2}(z)
\right)^{-1} B_n(z),
\end{array}
\]
where the entries of the Schur class function
$\cQ_n(z)=\begin{bmatrix}\Theta^{(0)}_n(z)& C_n(z)\cr
B_n(z)&A_n(z)\end{bmatrix}$ are given by \eqref{functt1}.
\end{theorem}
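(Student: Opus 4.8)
The plan is to substitute the two resolvent identities already in hand --- Theorem~\ref{formula1} and the expansion~\eqref{expr11} --- into the conservative CMV realization~\eqref{charfunc1} of $\Theta$, and then to read off the resulting operator blocks as $\Theta^{(0)}_n$, $A_n$, $B_n$, $C_n$ using their definitions~\eqref{functt1}. Note that the hypotheses $\sD_{\Gamma_{2n+1}}\ne\{0\}$, $\sD_{\Gamma^*_{2n+1}}\ne\{0\}$ are exactly what is needed to invoke Theorem~\ref{formula1}.

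First I would recall from~\eqref{charfunc1} that
\[
\Theta(z)=\Gamma_0+zD_{\Gamma^*_0}\begin{bmatrix}\Gamma_1&D_{\Gamma^*_1}\end{bmatrix}\left(P_{\cH_1}\left(I_\sH-z\cT_0\right)^{-1}\uphar\sD_{\Gamma_0}\right)D_{\Gamma_0},\quad z\in\dD.
\]
Since $\sD_{\Gamma_0}\subseteq\cH_1\subseteq\cK_n$ we have $P_{\cH_1}=P_{\cH_1}P_{\cK_n}$, so Theorem~\ref{formula1} replaces the infinite CMV resolvent by the finite one,
\[
P_{\cH_1}\left(I_\sH-z\cT_0\right)^{-1}\uphar\sD_{\Gamma_0}=P_{\cH_1}\left(I_{\cK_n}-z\cS_{n,\Theta_{2n+2}(z)}\right)^{-1}\uphar\sD_{\Gamma_0},
\]
with the function $\Theta_{2n+2}(z)$ playing the role of the last Schur parameter. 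Hence $\Theta(z)$ is the compression of $\left(I_{\cK_n}-z\cS_{n,\Theta_{2n+2}(z)}\right)^{-1}$ by $zD_{\Gamma^*_0}\begin{bmatrix}\Gamma_1&D_{\Gamma^*_1}\end{bmatrix}P_{\cH_1}(\cdot)\uphar\sD_{\Gamma_0}\,D_{\Gamma_0}$, with $\Gamma_0$ added.

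Next I would insert the resolvent formula~\eqref{expr11}, which is Proposition~\ref{resform} for the particular contraction $\Gamma=\Theta_{2n+2}(z)$, writing $\left(I_{\cK_n}-z\cS_{n,\Theta_{2n+2}(z)}\right)^{-1}$ as $\left(I_{\cK_n}-z\cS_{n,0}\right)^{-1}$ plus a correction term built from $\Theta_{2n+2}(z)$ and $\left(I_{\cK_n}-z\cS_{n,0}\right)^{-1}$. Compressing the unperturbed term and adding $\Gamma_0$ gives exactly $\Theta^{(0)}_n(z)$ by the first line of~\eqref{functt1}. In the correction term, comparison with~\eqref{functt1} identifies the pieces: the left factor is $C_n(z)$; the inner operator of the middle resolvent is $A_n(z)$, so that factor becomes $\left(I_{\sD_{\Gamma^*_{2n+1}}}-\Theta_{2n+2}(z)A_n(z)\right)^{-1}$; and the right factor is $z^{-1}B_n(z)$. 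Keeping track of the scalar powers --- there is a surplus factor $z$ in front of the correction in~\eqref{expr11} and a missing factor $z$ in the right factor, and they cancel --- the correction collapses to $C_n(z)\left(I_{\sD_{\Gamma^*_{2n+1}}}-\Theta_{2n+2}(z)A_n(z)\right)^{-1}\Theta_{2n+2}(z)B_n(z)$, which is the first asserted identity. All inverses make sense on $\dD$ because $\|A_n(z)\|\le|z|<1$ and $\|\Theta_{2n+2}(z)\|\le1$, so $\|\Theta_{2n+2}(z)A_n(z)\|<1$; the value at $z=0$ follows by continuity, both sides equalling $\Gamma_0$.

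The second identity then follows from the first via $\left(I-XY\right)^{-1}X=X\left(I-YX\right)^{-1}$ with $X=\Theta_{2n+2}(z)$ and $Y=A_n(z)$. The only genuinely delicate point is the bookkeeping in the previous paragraph: one must verify that the compressions $P_{\cH_1}(\cdot)\uphar\sD_{\Gamma_0}$ and $P_{\cH_{n+1}}(\cdot)\uphar\sD_{\Gamma^*_{2n+1}}$ land on precisely the subspaces occurring in~\eqref{functt1}, and that the scalar factors of $z$ combine correctly; this is routine once Theorem~\ref{formula1} and Proposition~\ref{resform} are in place. As a check, the same computation can be carried out with the tilde operators, starting from~\eqref{charfunc2} and~\eqref{expr22} and using Proposition~\ref{conssys} to identify $\wt A_n,\wt B_n,\wt C_n$ with $A_n,B_n,C_n$.
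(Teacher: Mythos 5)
Your proposal is correct and follows essentially the same route as the paper, whose proof is the one-line instruction to combine \eqref{charfunc1}, \eqref{charfunc2} with \eqref{schfr1}, \eqref{schfr2}, \eqref{expr11}, \eqref{expr22} and Proposition \ref{conssys}; you have merely written out the substitution and the bookkeeping of the factors of $z$, which checks out (the $z$ from \eqref{charfunc1} is absorbed into $C_n$ and the $z$ prefacing the correction in \eqref{expr11} into $B_n$). The only minor remark is that Proposition \ref{conssys} is not just a cross-check: it is what justifies calling $\cQ_n$ a Schur class function in the statement, and the identification $A_n=\wt A_n$, etc., is what makes the tilde computation yield the same formula.
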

\begin{proof}
Use \eqref{charfunc1}, \eqref{charfunc2}, and apply \eqref{schfr1},
\eqref{schfr2}, \eqref{expr11}, \eqref{expr22}, and Proposition
\ref{conssys}.
\end{proof}
We arrive at the following statement.
\begin{theorem}
\label{schpr1}
 Let
$$\Gamma_0\in\bL(\sM,\sN),\;
\Gamma_1,\ldots,\; \Gamma_{2n+1}$$
 be a choice sequence. Suppose $
\sD_{\Gamma_{2n+1}}\ne\{0\},\;\sD_{\Gamma^*_{2n+1}}\ne\{0\}.
$
 Then the formula
\begin{equation}
\label{solut}
\Theta(z)=\Theta_n^{(0)}(z)+C_n(z)\cE(z)\left(I_{\sD_{\Gamma_{2n+1}}}-A_n(z)\cE(z)\right)^{-1}
B_n(z),\; z\in\dD
\end{equation}
gives a one-to-one correspondence between all functions
$\cE\in\bS(\sD_{\Gamma_{2n+1}},\sD_{\Gamma^*_{2n+1}})$ and all
functions $\Theta\in\bS(\sM,\sN)$ having given choice sequence
$\Gamma_0,\Gamma_1,\ldots,\Gamma_{2n+1}$ as their first $2n+2$ Schur
parameters. Moreover the Schur parameters of the function $\Theta$
given by \eqref{solut} are
\[
\Gamma_0,\Gamma_1,\ldots,\Gamma_{2n+1},\gamma^{(\cE)}_0,\gamma^{(\cE)}_1,\ldots,
\]
where
$\gamma^{(\cE)}_0\in\bL(\sD_{\Gamma_{2n+1}},\sD_{\Gamma^*_{2n+1}}),\gamma^{(\cE)}_1,\ldots$
are the Schur parameters of $\cE(z)$.
\end{theorem}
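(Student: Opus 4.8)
The plan is to realize \eqref{solut} as one half of a pair of mutually inverse maps: the map $F$ sending $\cE$ to the function $\Theta$ on the right-hand side of \eqref{solut}, and the map $G$ sending a function $\Theta\in\bS(\sM,\sN)$ whose first $2n+2$ Schur parameters are $\Gamma_0,\dots,\Gamma_{2n+1}$ to its associated function $\Theta_{2n+2}$. Before anything else I would record the structural remark that makes the whole argument work: the coefficient functions $\Theta^{(0)}_n,A_n,B_n,C_n$ of \eqref{functt1}, hence the block Schur function $\cQ_n$ of \eqref{cq1}, are assembled from $\cS_{n,0}=\cS_n(\Gamma_0,\dots,\Gamma_{2n+1},0)$ and therefore depend only on the prescribed finite choice sequence $\Gamma_0,\dots,\Gamma_{2n+1}$, not on any continuation of it. I would also use the bound $\|A_n(z)\|\le|z|<1$ on $\dD$ (established just before Theorem \ref{desr}) together with $\|\cE(z)\|\le 1$ to see that $I_{\sD_{\Gamma_{2n+1}}}-A_n(z)\cE(z)$ is boundedly invertible for each $z\in\dD$, so the right-hand side of \eqref{solut} is a well-defined $\bL(\sM,\sN)$-valued holomorphic function on $\dD$.

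The first main step is to show that $G$ is defined on the asserted target set and that $F\circ G$ is the identity there. Starting from $\Theta\in\bS(\sM,\sN)$ with first $2n+2$ Schur parameters $\Gamma_0,\dots,\Gamma_{2n+1}$, the hypotheses $\sD_{\Gamma_{2n+1}}\ne\{0\}$ and $\sD_{\Gamma^*_{2n+1}}\ne\{0\}$ force $\Gamma_{2n+1}$ to be neither isometric nor co-isometric, so the operator Schur algorithm does not stop at step $2n+1$ and $\Theta_{2n+2}\in\bS(\sD_{\Gamma_{2n+1}},\sD_{\Gamma^*_{2n+1}})$ is a genuine Schur function. Theorem \ref{desr} applied to this $\Theta$ (its second displayed identity) is precisely \eqref{solut} with $\cE=\Theta_{2n+2}$, which shows $\Theta=F(\Theta_{2n+2})=F(G(\Theta))$; hence every such $\Theta$ lies in the range of $F$.

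The second main step is to show that $F$ lands in the target set, that $G\circ F$ is the identity, and simultaneously to identify the Schur parameters of $F(\cE)$. Given $\cE\in\bS(\sD_{\Gamma_{2n+1}},\sD_{\Gamma^*_{2n+1}})$ with Schur parameters $\gamma^{(\cE)}_0,\gamma^{(\cE)}_1,\dots$ (so $\gamma^{(\cE)}_0\in\bL(\sD_{\Gamma_{2n+1}},\sD_{\Gamma^*_{2n+1}})$), the concatenation $\Gamma_0,\dots,\Gamma_{2n+1},\gamma^{(\cE)}_0,\gamma^{(\cE)}_1,\dots$ is again a choice sequence, so by Theorem \ref{SchurAlg} there is a unique $\Theta'\in\bS(\sM,\sN)$ having it as its list of Schur parameters. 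Unwinding the definition of the operator Schur algorithm shows that the $m$-th associated function of a Schur function has for Schur parameters the tail of the original list beginning at index $m$; with $m=2n+2$ this gives that $\Theta'_{2n+2}$ has Schur parameters $\gamma^{(\cE)}_0,\gamma^{(\cE)}_1,\dots$, whence $\Theta'_{2n+2}=\cE$ by the uniqueness in Theorem \ref{SchurAlg}. Applying Theorem \ref{desr} to $\Theta'$ and invoking the structural remark that $\Theta^{(0)}_n,A_n,B_n,C_n$ are insensitive to the continuation, we get $\Theta'(z)=\Theta^{(0)}_n(z)+C_n(z)\cE(z)(I_{\sD_{\Gamma_{2n+1}}}-A_n(z)\cE(z))^{-1}B_n(z)=F(\cE)(z)$ for $z\in\dD$. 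Thus $F(\cE)=\Theta'\in\bS(\sM,\sN)$ has first $2n+2$ Schur parameters $\Gamma_0,\dots,\Gamma_{2n+1}$, its full list of Schur parameters is $\Gamma_0,\dots,\Gamma_{2n+1},\gamma^{(\cE)}_0,\gamma^{(\cE)}_1,\dots$ as claimed, and $G(F(\cE))=\Theta'_{2n+2}=\cE$. Combining the two steps, $F$ and $G$ are mutually inverse, so $F$ is the desired bijection (injectivity from $G\circ F={\rm id}$, surjectivity onto the target set from the first step).

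The part I expect to require the most care, though it is really only a matter of unwinding a definition, is the identity $\Theta'_{2n+2}=\cE$ in the second step, i.e.\ the bookkeeping fact that passing to the $(2n+2)$-nd associated function merely shifts the sequence of Schur parameters by $2n+2$ places (including the degenerate cases where $\cE$ has a finite list ending in a unitary parameter, which are covered by the corresponding cases in the proof of Theorem \ref{desr}). Apart from that, the argument is a bare application of Theorems \ref{desr} and \ref{SchurAlg} together with the observation that the functions appearing in \eqref{solut} are manufactured solely out of $\Gamma_0,\dots,\Gamma_{2n+1}$; no new computation is needed.
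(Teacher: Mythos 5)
Your proposal is correct and follows essentially the same route as the paper: both directions of the bijection are obtained by applying Theorem \ref{desr} to the function $\Theta$ whose Schur parameters are the concatenation $\Gamma_0,\dots,\Gamma_{2n+1},\gamma^{(\cE)}_0,\gamma^{(\cE)}_1,\dots$, using Theorem \ref{SchurAlg} and the fact that $\Theta_{2n+2}=\cE$. Your write-up merely makes explicit a few points the paper leaves implicit (invertibility of $I-A_n(z)\cE(z)$, the tail-shift property of the Schur algorithm, and the mutual-inverse bookkeeping), which is fine.
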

\begin{proof}
Assume $\Theta\in\bS(\sM,\sN)$ has
$\Gamma_0,\Gamma_1,\ldots,\Gamma_{2n+1}$ as its first $2n+2$ Schur
parameters and let $\Gamma_{2n+2},\ldots$ are the rest Schur
parameters of $\Theta$. Denote $\cE$ the function from
$\bS(\sD_{\Gamma_{2n+1}},\sD_{\Gamma^*_{2n+1}})$ with the Schur
parameters $\Gamma_{2n+2},\ldots$. Then $\Theta_{2n+2}(z)=\cE(z)$
for all $z\in\dD$. Here $\Theta_{2n+2}$ is the function associated
with $\Theta$ in accordance with the Schur algorithm. Then
constructing the block operator matrix $\cS_{n,0}$ by means  of
\eqref{prdtn}, the function $\cQ_n(z)$ of the form \eqref{cq1}, and
applying Theorem \ref{desr}, we get equality \eqref{solut}.

Conversely, suppose
$\cE\in\bS(\sD_{\Gamma_{2n+1}},\sD_{\Gamma^*_{2n+1}})$ is given. Let
$$\gamma^{(\cE)}_0\in\bL(\sD_{\Gamma_{2n+1}},\sD_{\Gamma^*_{2n+1}}),\;\gamma^{(\cE)}_1
,\ldots
$$
be the Schur parameters of $\cE$. Let $\Theta\in\bS(\sM,\sN)$ be the
function with the Schur parameters
\[
\Gamma_0,\ldots, \Gamma_{2n+1},\gamma^{(\cE)}_0,\ldots.
\]
Then $\Theta_{2n+2}(z)=\cE(z)$ for all $z\in\dD$ and by Theorem
\ref{desr} the functions $\cE(z)$ and $\Theta(z)$ are connected by
\eqref{solut}.
\end{proof}
Observe that the function $\Theta^{(0)}_n\in\bS(\sM,\sN)$
corresponds to the parameter
$$\cE\equiv 0\in\bS(\sD_{\Gamma_{2n+1}},\sD_{\Gamma^*_{2n+1}}),$$ i.e., the
Shur parameters of $\Theta^{(0)}_n$ are
$\Gamma_0,\ldots,\Gamma_{2n+1},0,0,\ldots.$
\begin{corollary}
\label{unq1} Let
$$\Gamma_0\in\bL(\sM,\sN),\;
\Gamma_1,\ldots,\; \Gamma_{2n+1},\Gamma_{2n+2}$$
 be a choice sequence. Suppose $\Gamma_{2n+2}$ is either isometry or
co-isometry.
 Then
\begin{equation}
\label{solutun}
\Theta(z)=\Theta_n^{(0)}(z)+C_n(z)\Gamma_{2n+2}\left(I_{\sD_{\Gamma_{2n+1}}}-A_n(z)\Gamma_{2n+2}\right)^{-1}
B_n(z),\; z\in\dD
\end{equation}
is a unique function from $\bS(\sM,\sN)$ having
$\Gamma_0,\Gamma_1,\ldots\Gamma_{2n+2}$ as its first $2n+2$ Schur
parameters.
\end{corollary}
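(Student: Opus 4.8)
The plan is to deduce the statement from Theorem~\ref{schpr1} by specializing the free Schur-class parameter $\cE$ to the constant function $\Gamma_{2n+2}$. First I would note that the conditions $\sD_{\Gamma_{2n+1}}\ne\{0\}$ and $\sD_{\Gamma^*_{2n+1}}\ne\{0\}$ are presupposed by the very construction of $\cS_{n,0}$, $\Theta^{(0)}_n$, $A_n$, $B_n$, $C_n$ that enter \eqref{solutun}, so Theorem~\ref{schpr1} applies verbatim to the choice sequence $\Gamma_0,\Gamma_1,\ldots,\Gamma_{2n+1}$: formula \eqref{solut} sets up a bijection between $\cE\in\bS(\sD_{\Gamma_{2n+1}},\sD_{\Gamma^*_{2n+1}})$ and the functions $\Theta\in\bS(\sM,\sN)$ whose first $2n+2$ Schur parameters are $\Gamma_0,\ldots,\Gamma_{2n+1}$, and the Schur parameters of the $\Theta$ produced by \eqref{solut} are $\Gamma_0,\ldots,\Gamma_{2n+1},\gamma^{(\cE)}_0,\gamma^{(\cE)}_1,\ldots$, with $\{\gamma^{(\cE)}_k\}$ the Schur parameters of $\cE$.

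Next I would identify the value of $\cE$ that reproduces \eqref{solutun}. Take $\cE(z)\equiv\Gamma_{2n+2}$; this lies in $\bS(\sD_{\Gamma_{2n+1}},\sD_{\Gamma^*_{2n+1}})$ because $\Gamma_{2n+2}$ is a contraction, and its zeroth Schur parameter is $\gamma^{(\cE)}_0=\cE(0)=\Gamma_{2n+2}$. Since $\Gamma_{2n+2}$ is isometric or co-isometric, the M\"obius representation \eqref{MREP} degenerates: if $\Gamma_{2n+2}$ is isometric then $D_{\Gamma_{2n+2}}=0$, if co-isometric then $D_{\Gamma^*_{2n+2}}=0$, so in either case the M\"obius term in the Schur step for $\cE$ vanishes, and (as recorded in Section~\ref{pre}) the Schur parameters of $\cE$ are $\Gamma_{2n+2},0,0,\ldots$, terminating at $\Gamma_{2n+2}$ when it is unitary. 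Substituting $\cE\equiv\Gamma_{2n+2}$ into \eqref{solut} replaces $\cE(z)$ by the constant $\Gamma_{2n+2}$ and produces precisely \eqref{solutun}; by Theorem~\ref{schpr1} the resulting $\Theta$ belongs to $\bS(\sM,\sN)$ and has $\Gamma_0,\ldots,\Gamma_{2n+1},\Gamma_{2n+2},0,0,\ldots$ as its Schur parameters, so $\Gamma_0,\ldots,\Gamma_{2n+2}$ are indeed its initial Schur parameters.

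For uniqueness I would invoke the bijectivity part of Theorem~\ref{schpr1}. If $\Theta'\in\bS(\sM,\sN)$ has $\Gamma_0,\ldots,\Gamma_{2n+2}$ among its first Schur parameters, then in particular $\Gamma_0,\ldots,\Gamma_{2n+1}$ are its first $2n+2$ Schur parameters, so $\Theta'$ is given by \eqref{solut} for a unique $\cE'\in\bS(\sD_{\Gamma_{2n+1}},\sD_{\Gamma^*_{2n+1}})$ whose Schur parameters form the tail $\Gamma_{2n+2},\ldots$ of the Schur parameters of $\Theta'$; hence $\cE'(0)=\gamma^{(\cE')}_0=\Gamma_{2n+2}$. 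As observed above, a Schur-class function taking the isometric (or co-isometric) value $\Gamma_{2n+2}$ at the origin must equal the constant $\Gamma_{2n+2}$, so $\cE'\equiv\Gamma_{2n+2}=\cE$ and therefore $\Theta'=\Theta$; alternatively, once existence is settled, uniqueness is immediate from Theorem~\ref{SchPr2}. The only nontrivial ingredient is the elementary remark that an isometric or co-isometric operator value at $0$ forces a Schur-class function to be constant, which reads off directly from \eqref{MREP}; everything else is bookkeeping with Theorem~\ref{schpr1}, so I do not expect a genuine obstacle here.
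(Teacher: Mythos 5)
Your proposal is correct and is essentially the argument the paper intends: the corollary is stated without proof precisely because it follows from Theorem~\ref{schpr1} by taking $\cE\equiv\Gamma_{2n+2}$, together with the observation (read off from the M\"obius representation \eqref{MREP}, or equivalently Theorem~\ref{SchPr2}) that a Schur-class function whose value at $0$ is isometric or co-isometric must be that constant. Your handling of both existence and the uniqueness direction matches the paper's intended route, so there is nothing to add.
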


\subsubsection{CMV block operator matrices and the coupling of
conservative systems} Let $\{\Gamma_k\}_{k\ge} 0$ be a choice
sequence, $\Gamma_0\in\bL(\sM,\sN)$. Suppose condition
\eqref{nevir}. Let $\cU_0=\cU_0(\{\Gamma_k\})$ be CMV block operator
matrix. The unitary operator $U_0$ acts from the Hilbert space
$\sM\oplus\sH_0$ onto the Hilbert space $\sN\oplus\sH$, where
$\sH_0$ is a Hilbert space constructed by means of defect spaces
$\{\sD_{\Gamma_k},\;\sD_{\Gamma^*_k}\}_{k\ge 0}$ (see Section
\ref{cmv} and Appendix). The operator $\cU_0$ takes the form
\[
\cU_0=\left[\begin{array}{c|c}\Gamma_0&\begin{array}{cccc}D_{\Gamma_0}\Gamma_1&D_{\Gamma_0}D_{\Gamma^*_1}&0&\ldots
\end{array}\\
\hline\begin{array}{c}D_{\Gamma_0}\cr
0\cr\vdots\end{array}&\cT_0\end{array}\right],
\]
where $\cT_0$ is truncted CMV matrix. Let the Hilbert space $\cK_n$
be defined in \eqref{HSN}. Consider the conservative system $\tau_n$
defined in Subsection \ref{conn11},
\[
\tau_n=\left\{\Psi_n;\; \sM\oplus \sD_{\Gamma^*_{2n+1}},\;
\sN\oplus\sD_{\Gamma_{2n+1}},\; \cK_n\right\}.
\]
The corresponding unitary operator $\Psi_n=\begin{bmatrix}
N_n&M_n\cr L_n&\cS_{n,0}
\end{bmatrix}$
 is of the form
%\begin{multline*}
\[
%\begin{array}{l}
\begin{array}{l} \left[\begin{array}{c|c}
\begin{array}{cccccc}\Gamma_0&0\cr 0&0
\end{array}&
\begin{array}{ccccccc}
D_{\Gamma^*_0}\Gamma_1&D_{\Gamma^*_0}D_{\Gamma^*_1}%\end{bmatrix}P_{\cH_1}\\
&0&\ldots&0&0&0\cr
0&0&\ldots&\ldots&0&D_{\Gamma_{2n+1}}&-\Gamma^*_{2n+1}\end{array}\\
 \hline
\begin{array}{cc}
D_{\Gamma_{0}}&0 \cr 0&0\cr
 \vdots&\vdots\cr
0& I_{\sD_{\Gamma^*_{2n+1}}}
\end{array}&
%\hline%&
%\begin{array}{c|c}
%{\bf 0}&
\cS_{n,0} %0&0&0&\ldots\cr I_{\sD_{\Gamma_{2N}}}&0&0&\ldots\cr
%0&I_{\sD_{\Gamma_{2N}}}&0&\ldots\cr \vdots&\vdots&\vdots&\vdots
%\end{array}%&%\\
%& \mathbf{T}
\end{array}
\right]:\\
\qquad\qquad\qquad\qquad\qquad\qquad\begin{array}{c}\left[\begin{array}{l}\sM\\\oplus\\\sD_{\Gamma^*_{2n+1}}\end{array}\right]\\\oplus\\\cK_n\end{array}
\to
\begin{array}{c}\left[\begin{array}{l}\sM\\\oplus\\\sD_{\Gamma^*_{2n+1}}\end{array}\right]\\\oplus\\\cK_n\end{array}.
%\end{multline*}
\end{array}
%\end{array}
\]
Consider also the CMV matrix $\cU_{2n+2}\left(\{\Gamma_k\}_{k\ge
2n+2}\right)$ . The precise form of $\cU_{2n+2}$ depends on the
cases 1)-- 7) mentioned in the proof of Theorem \ref{formula1}. In
particular, if both operators $\Gamma_{2n+2}$ and $\Gamma_{2n+3}$
are neither isometric nor co-isometric, then
\begin{multline*}
%\[
%\begin{array}{l}
\cU_{2n+2}=\\
=\left[\begin{array}{c|c}\Gamma_{2n+2}&\begin{array}{cccc}D_{\Gamma^*_{2n+2}}\Gamma_{2n+3}&
D_{\Gamma^*_{2n+2}}\sD_{\Gamma^*_{2n+3}}&0&\ldots\end{array}\\
\hline\begin{array}{c}D_{\Gamma_{2n+2}}\cr
0\cr\vdots\end{array}&\cT_{2n+2}
\end{array}
\right]: %\qquad\qquad
\begin{array}{l}\sD_{\Gamma_{2n+1}}\\\oplus\\\sH_{2n+2}\end{array}\to
\begin{array}{l}\sD_{\Gamma^*_{2n+1}}\\\oplus\\\sH_{2n+2}\end{array},
%\end{array}
\end{multline*}
%\]
where $\cT_{2n+2}=\cT'$ is the truncated CMV matrix related to
$\cU_{2n+2}$. Let
$$\zeta_{2n+2}=\left\{\cU_{2n+2};\;\sD_{\Gamma_{2n+1}},\sD_{\Gamma^*_{2n+1}},\;\sH_{2n+2}\right\}$$
be the corresponding conservative system,
$\sH_{2n+2}=\sH_{2n+2}\left(\{\Gamma_k\}_{k\ge
2n+2}\right)(=\sH_0')$ (see \eqref{matrrep}),
$\sH_0=\cK_n\oplus\sH_{2n+2}.$ The truncated CMV block operator
matrix $\cT_0$ with respect to the decomposition
$\sH_0=\cK_n\oplus\sH_{2n+2}$ takes the form \eqref{matrrep}:
\[
\cT_0= \left[\begin{array}{c|c} \cS_n&
\begin{array}{ccc}0&0&\ldots
\cr \vdots&\vdots&\vdots\cr 0&0&\ldots\cr
\cC_{n+1}&0&\ldots\end{array}\\
\hline\begin{array}{cccc}0&\ldots&0&\cA_{n+1} \cr
0&\ldots&0&0\cr\vdots&\vdots&\vdots&\vdots\end{array}&\cT_{2n+2}
\end{array}\right]
\]
 The notion of the coupling of unitary
colligations (conservative systems) can be found in %\cite{ArGr},
\cite{Kh3}, \cite{Kh2}, \cite{Peller}. By  straightforward
calculations one can verify that the conservative system (unitary
colligation) $\tau_n$ is the \textit{universal} and \textit{the
conservative system $\zeta_0$ (see \eqref{CMVMODEL}) is the coupling
of the conservative systems $\tau_n $ and $\zeta_{2n+2}$}. This
means that the equality
\[
\begin{bmatrix}\mathfrak n\cr h'_0\cr h'_1\end{bmatrix}=U_0\begin{bmatrix}\mathfrak m\cr h_0\cr
h_1\end{bmatrix},\;\mathfrak m\in\sM,\;\mathfrak n\in\sN, \; h_0,
h'_0\in\cK_n,\; h_1,\; h_1'\in\sH_{2n+2}
\]
holds if
\[
\Psi_n\begin{bmatrix}\mathfrak m\cr \gamma_*\cr
h_0\end{bmatrix}=\begin{bmatrix}\mathfrak n\cr \gamma\cr
h'_0\end{bmatrix} \quad\mbox{and}\quad
\cU_{2n+2}\begin{bmatrix}\gamma\cr
h_1\end{bmatrix}=\begin{bmatrix}\gamma_*\cr h'_1\end{bmatrix}
\]
for some $\gamma_*\in\sD_{\Gamma^*_{2n+1}}$ and
$\gamma\in\sD_{\Gamma_{2n+1}}$. Notice that
\begin{multline*}
\gamma=\left(D_{\Gamma_{2n+1}}P_{\sD_{\Gamma_{2n}}}-\Gamma^*_{2n+1}P_{\sD_{\Gamma^*_{2n+1}}}\right)h_0,\\
\gamma_*=\Gamma_{2n+2}\left(D_{\Gamma_{2n+1}}P_{\sD_{\Gamma_{2n}}}-\Gamma^*_{2n+1}P_{\sD_{\Gamma^*_{2n+1}}}\right)h_0
\\
+D_{\Gamma^*_{2n+2}}\left(\Gamma_{2n+3}P_{\sD_{\Gamma_{2n+3}}}+D_{\Gamma^*_{2n+3}}P_{\sD_{\Gamma^*_{2n+3}}}
\right)h_1.
\end{multline*}
As it is established in \cite{ArGr}, \cite{Kh3}, \cite{Kh},
\cite{Kh2} if a conservative system $\Sigma$ is the coupling of
certain universal conservative system $\Sigma_0$ and a conservative
system $\Sigma'$, then the transfer functions $\Theta_\Sigma$ and
$\Theta_{\Sigma'}$ of the systems $\Sigma$ and $\Sigma'$,
respectively, are connected by the relation
\[
\Theta_\Sigma(z)=a_{11}(z)+a_{12}(z)\left(I-\Theta_{\Sigma'}(z)a_{22}(z)\right)^{-1}
\Theta_{\Sigma'}(z)a_{21}(z),\; z\in\dD,
\]
where
\[
\Theta_{\Sigma_0}(z)=\begin{bmatrix}a_{11}(z)& a_{12}(z)\cr
a_{21}(z)& a_{22}(z)\end{bmatrix}
\]
 is the transfer function of the
universal system $\Sigma_0$. Thus, relations in Theorem \ref{desr}
are also a consequence of the facts that $\zeta_0$ is the coupling
of $\tau_n$ and $\zeta_{2n+2}$ and the unitary equivalence of the
systems $\tau_n$ and $\wt \tau_n$ (see Proposition \ref{conssys}).
\subsection{The matrix $\wh \cS_{n,0}$ and connection between $\Theta$ and $\Theta_{2n+1}$}
Now we established a connection between $\Theta$ and
$\Theta_{2n+1}$. Suppose $\{\Gamma_k\}$ are the Schur parameters of
$\Theta\in\bS(\sM,\sN)$ and
$\sD_{\Gamma_{2n}}\ne\{0\},\;\sD_{\Gamma^*_{2n}} \ne\{0\}.$ Then the
operators (the choice sequence)
\[
\wh\Gamma_0:=0\in\bL(\sM,\sN),\; \wh
\Gamma_1:=\Gamma_0\in\bL(\sM,\sN),\;\wh\Gamma_2:=\Gamma_1\in\bL(\sD_{\Gamma_0},\sD_{\Gamma^*_0}),\;
\ldots
\]
are the Schur parameters of the function
$\wh\Theta(z)=z\Theta(z)\in\bS(\sM,\sN)$. So,
$\wh\Gamma_l:=\Gamma_{l-1}$, $l\ge 1$. Let
\[
\wh
\cH_1=\begin{array}{l}\sM\\\oplus\\\sD_{\Gamma^*_{0}}\end{array},\;
\wh\cH_k:=\begin{array}{l}\sD_{\wh\Gamma_{2k-2}}\\\oplus\\\sD_{\wh\Gamma^*_{2k-1}}\end{array}=
\begin{array}{l}\sD_{\Gamma_{2k-3}}\\\oplus\\\sD_{\Gamma^*_{2k-2}}\end{array},\;
k=2,\ldots, n+1,
\]
\[
\wh\cK_n:=\bigoplus\limits_{k=1}^{n+1}\wh\cH_{k}.
\]
Now we can apply the approach of Subsection \ref{conn11}.
 Define the
operator $\wh \cS_{n,0}$ in accordance with \eqref{tgeo}
\[
\wh \cS_{n,0}:=
\cS_n\left(\wh\Gamma_0,\wh\Gamma_1,\dots,\wh\Gamma_{2n+1}, 0\right)=
\cS_{n}\left(0, \Gamma_0, \Gamma_1,\ldots, \Gamma_{2n},0 \right),
\]
i.e.,
\begin{multline*}%\[\begin{array}{l}
\wh
\cS_{n,0}=\left(-\wh\Gamma^*_0\oplus\bigoplus\limits_{k=1}^{n}{\bf
J}_{\wh\Gamma_{2k}}\oplus 0\right)\times \left(
\bigoplus\limits_{k=1}^{n+1}{\bf J}_{\wh\Gamma_{2k-1}}\right)\\
= \left(0\oplus\bigoplus\limits_{k=1}^{n}{\bf
J}_{\Gamma_{2k-1}}\oplus 0\right)\times \left(
\bigoplus\limits_{k=1}^{n+1}{\bf J}_{\Gamma_{2k-2}}\right).
%\end{array}\]
\end{multline*}
Then construct the function
$$\wh \cQ_n(z)=\begin{bmatrix}\wh\Theta^{(0)}_n(z)& \wh C_n(z)\cr
\wh B_n(z)&\wh
A_n(z)\end{bmatrix}:\begin{array}{l}\sM\\\oplus\\\sD_{\Gamma^*_{2n}}\end{array}\to
\begin{array}{l}\sN\\\oplus\\\sD_{\Gamma_{2n}}\end{array}$$
in accordance with \eqref{functt1} and \eqref{cq1}. We have
\begin{equation}
\label{functt222} \left\{
\begin{array}{l}
\wh\Theta^{(0)}_n(z):=z\begin{bmatrix}\Gamma_0&D_{\Gamma^*_0}\end{bmatrix}
\left(P_{\wh\cH_1}\left(I_{\wh\cK_n}-z\wh \cS_{n,0}\right)^{-1}\uphar\sM\right)\\\
\qquad\qquad\qquad\qquad\in\bL(\sM,\sN),\\
\wh A_n(z):=z\begin{bmatrix}D_{\Gamma_{2n}}&-\Gamma^*_{2n}
\end{bmatrix}\left(P_{\wh\cH_{n+1}}\left(I_{\wh\cK_n}-z\wh
\cS_{n,0}\right)^{-1}\uphar\sD_{\Gamma^*_{2n}}\right)\\
\qquad\qquad\qquad\qquad\in\bL(\sD_{\Gamma^*_{2n}},\sD_{\Gamma_{2n}}), \\
\wh B_n(z):=z\begin{bmatrix}D_{\Gamma_{2n}}&-\Gamma^*_{2n}
\end{bmatrix}\left(P_{\wh\cH_{n+1}}\left(I_{\wh\cK_n}-z\wh
\cS_{n,0}\right)^{-1}\uphar\sM\right)\\
\qquad\qquad\qquad\qquad\in\bL(\sM,\sD_{\Gamma_{2n}}),\\
\wh C_n(z):=z\begin{bmatrix}\Gamma_0&D_{\Gamma^*_0}\end{bmatrix}
\left(P_{\wh\cH_1}\left(I_{\wh\cK_n}-z\wh
\cS_{n,0}\right)^{-1}\uphar\sD_{\Gamma^*_{2n}}\right)\\
\qquad\qquad\qquad\qquad \in\bL(\sD_{\Gamma^*_{2n}},\sN).
\end{array}\right.
\end{equation}
Due to Proposition \ref{conssys} the function $\wh\cQ_n$ belongs to
the Schur class
$\bS\left(\sM\oplus\sD_{\Gamma^*_{2n}},\sN\oplus\sD_{\Gamma_{2n}}\right).$
 Since $||\wh \cQ_n(z)||\le 1$ for all $z\in\dD$ and $\wh
\cQ_n(0)=0$, by Schwarz's lemma for the function
\[%\begin{equation}
%\label{cq11}
\wh q_n(z):=z^{-1}\wh \cQ_n(z)=\begin{bmatrix}z^{-1}\wh
\Theta^{(0)}_n(z)& z^{-1}\wh C_n(z)\cr z^{-1}\wh B_n(z)&z^{-1}\wh
A_n(z)\end{bmatrix}:\begin{array}{l}\sM\\\oplus\\\sD_{\Gamma^*_{2n}}\end{array}\to
\begin{array}{l}\sN\\\oplus\\\sD_{\Gamma_{2n}}\end{array}
\]
 we obtain $ ||\wh q_n(z)||\le 1,\;z\in\dD.$
%\end{equation}
Clearly, the function
\begin{equation}
\label{newq} q_n(z):=\begin{bmatrix}z^{-1}\wh \Theta^{(0)}_n(z)&
z^{-1}\wh C_n(z)\cr \wh B_n(z)&\wh
A_n(z)\end{bmatrix}:\begin{array}{l}\sM\\\oplus\\\sD_{\Gamma^*_{2n}}\end{array}\to
\begin{array}{l}\sN\\\oplus\\\sD_{\Gamma_{2n}}\end{array}
\end{equation}
is also from the Schur class. Set
\[
\theta^{(0)}_n:=z^{-1}\wh \Theta^{(0)}_n(z),\; c_n(z):=z^{-1}\wh
C_n(z),\; a_n(z):=\wh A_n(z), \; b_n(z):=\wh B_n(z),\; z\in\dD.
\]
So,
\begin{equation}
\label{functt2} \left\{
\begin{array}{l}
\theta^{(0)}_n(z):=\begin{bmatrix}\Gamma_0&D_{\Gamma^*_0}\end{bmatrix}
\left(P_{\wh\cH_1}\left(I_{\wh\cK_n}-z\wh \cS_{n,0}\right)^{-1}\uphar\sM\right)\\
\qquad\qquad\qquad\qquad\in\bL(\sM,\sN),\\
a_n(z):=z\begin{bmatrix}D_{\Gamma_{2n}}&-\Gamma^*_{2n}
\end{bmatrix}\left(P_{\wh\cH_{n+1}}\left(I_{\wh\cK_n}-z\wh
\cS_{n,0}\right)^{-1}\uphar\sD_{\Gamma^*_{2n}}\right)\\
\qquad\qquad\qquad\qquad\in\bL(\sD_{\Gamma^*_{2n}},\sD_{\Gamma_{2n}}), \\
b_n(z):=z\begin{bmatrix}D_{\Gamma_{2n}}&-\Gamma^*_{2n}
\end{bmatrix}\left(P_{\wh\cH_{n+1}}\left(I_{\wh\cK_n}-z\wh
\cS_{n,0}\right)^{-1}\uphar\sM\right)\\
\qquad\qquad\qquad\qquad
\in\bL(\sM,\sD_{\Gamma_{2n}}),\\
c_n(z):=\begin{bmatrix}\Gamma_0&D_{\Gamma^*_0}\end{bmatrix}
\left(P_{\wh\cH_1}\left(I_{\wh\cK_n}-z\wh
\cS_{n,0}\right)^{-1}\uphar\sD_{\Gamma^*_{2n}}\right)\\
\qquad\qquad\qquad\qquad \in\bL(\sD_{\Gamma^*_{2n}},\sN).
\end{array}\right.
\end{equation}
%%%%%%%%%%%%%%%%%%%%%%%%%%%%%%%%%%%%%%%%%%%%%%%%%%%%%%%%%%%%%%%%%
\begin{theorem}
\label{connect21} Let $\Theta\in{\bS}(\sM,\sN)$ and let
$\{\Gamma\}_{n\ge 0}$ be its Schur parameters. Suppose that
$\sD_{\Gamma_{2n}}\ne \{0\}$ and $\sD_{\Gamma^*_{2n}}\ne \{0\}$.
Then the functions $\Theta$ and $\Theta_{2n+1}$ are connected by the
relations
\begin{equation}
\label{connectodd}
\begin{array}{l}
\Theta(z)=\theta_n^{(0)}(z)+c_n(z)\left(I_{\sD_{\Gamma^*_{2n}}}-\Theta_{2n+1}(z)a_n(z)\right)^{-1}
\Theta_{2n+1}(z)b_n(z)\\
=\theta_n^{(0)}(z)+c_n(z)\Theta_{2n+1}(z)\left(I_{\sD_{\Gamma_{2n}}}-a_n(z)\Theta_{2n+1}(z)
\right)^{-1} b_n(z), \;z\in\dD,
\end{array}
\end{equation}
where the entries of the Schur class function
$$q_n(z)=\begin{bmatrix}\theta^{(0)}_n(z)&
c_n(z)\cr
b_n(z)&a_n(z)\end{bmatrix}:\begin{array}{l}\sM\\\oplus\\\sD_{\Gamma^*_{2n}}\end{array}\to
\begin{array}{l}\sN\\\oplus\\\sD_{\Gamma_{2n}}\end{array}$$
are given by \eqref{functt2}.
\end{theorem}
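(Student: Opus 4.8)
The plan is to deduce the odd-index identity \eqref{connectodd} from the even-index identity of Theorem~\ref{desr}, applied not to $\Theta$ itself but to the shifted function $\wh\Theta(z):=z\Theta(z)$. I would first record the facts noted just before the statement: $\wh\Theta\in\bS(\sM,\sN)$, and its Schur parameters are $\wh\Gamma_0=0\in\bL(\sM,\sN)$ together with $\wh\Gamma_l=\Gamma_{l-1}$ for $l\ge 1$. In particular $\wh\Gamma_{2n+1}=\Gamma_{2n}$, so the hypotheses $\sD_{\Gamma_{2n}}\ne\{0\}$, $\sD_{\Gamma^*_{2n}}\ne\{0\}$ are exactly $\sD_{\wh\Gamma_{2n+1}}\ne\{0\}$, $\sD_{\wh\Gamma^*_{2n+1}}\ne\{0\}$, which is precisely what Theorem~\ref{desr} requires of $\wh\Theta$.

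Next I would identify the associated function $\wh\Theta_{2n+2}$ of $\wh\Theta$ with $\Theta_{2n+1}$. Because of the shift $\wh\Gamma_l=\Gamma_{l-1}$, the Schur parameters of $\wh\Theta_{2n+2}$ are $\wh\Gamma_{2n+2},\wh\Gamma_{2n+3},\dots=\Gamma_{2n+1},\Gamma_{2n+2},\dots$, which are exactly the Schur parameters of $\Theta_{2n+1}$, and both functions act between $\sD_{\Gamma_{2n}}$ and $\sD_{\Gamma^*_{2n}}$; by the one-to-one correspondence in Theorem~\ref{SchurAlg} this forces $\wh\Theta_{2n+2}(z)=\Theta_{2n+1}(z)$ on $\dD$. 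Then I would apply Theorem~\ref{desr} to $\wh\Theta$ with the same $n$: running the construction \eqref{functt1}--\eqref{cq1} on the Schur parameters $\wh\Gamma_0,\dots,\wh\Gamma_{2n+1}$ produces the operator $\wh\cS_{n,0}=\cS_n(0,\Gamma_0,\dots,\Gamma_{2n},0)$ and a Schur-class function $\wh\cQ_n$ whose entries are exactly those displayed in \eqref{functt222}, and the theorem yields
\[
\wh\Theta(z)=\wh\Theta^{(0)}_n(z)+\wh C_n(z)\bigl(I_{\sD_{\Gamma^*_{2n}}}-\Theta_{2n+1}(z)\wh A_n(z)\bigr)^{-1}\Theta_{2n+1}(z)\,\wh B_n(z)
\]
together with the companion form obtained by regrouping the factors.

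Finally, since $\wh\Theta(z)=z\Theta(z)$ while $\wh\Theta^{(0)}_n(0)=\wh\Gamma_0=0$ and $\wh C_n(0)=0$, I would divide the displayed identity by $z$; with $\theta^{(0)}_n(z)=z^{-1}\wh\Theta^{(0)}_n(z)$, $c_n(z)=z^{-1}\wh C_n(z)$, $a_n(z)=\wh A_n(z)$, $b_n(z)=\wh B_n(z)$ --- which are precisely the functions \eqref{functt2}, assembling into the Schur-class function $q_n$ as noted before the statement --- this is exactly \eqref{connectodd}.

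The only step needing real care is the index bookkeeping: checking that the $(2n+2)$-nd associated function of $z\Theta(z)$ really is $\Theta_{2n+1}$ with matching coefficient spaces, and that $\wh\cS_{n,0}$ and the entries in \eqref{functt222} are genuinely what the machinery of Theorem~\ref{desr} outputs for the choice sequence $0,\Gamma_0,\dots,\Gamma_{2n}$. Once the shift is tracked correctly the rest is a routine transcription of the even-index result; alternatively one could argue, as in the even case, via the coupling of conservative systems, but the reduction to Theorem~\ref{desr} is the shortest route.
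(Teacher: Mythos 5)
Your proposal is correct and follows essentially the same route as the paper: the paper's proof likewise passes to $\wh\Theta(z)=z\Theta(z)$, notes $\Theta_{2n+1}=\wh\Theta_{2n+2}$, applies Theorem~\ref{desr} to $\wh\Theta$, and then converts via \eqref{functt222}, \eqref{newq}, and \eqref{functt2}. Your additional care about the index bookkeeping (matching Schur parameters via Theorem~\ref{SchurAlg}) and the division by $z$ only makes explicit what the paper leaves implicit.
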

\begin{proof} Taking into account that
$\Theta_{2n+1}(z)=\wh\Theta_{2n+2}(z)$, $z\in\dD$ and applying
Theorem \ref{desr} we obtain
\[
\begin{array}{l}
\wh \Theta(z)=z\Theta(z)=\wh\Theta_n^{(0)}(z)+\wh
C_n(z)\left(I_{\sD_{\Gamma^*_{2n}}}-z\wh\Theta_{2n+2}(z)\wh
A_n(z)\right)^{-1}
\wh\Theta_{2n+2}(z)\wh B_n(z)\\
=\wh\Theta_n^{(0)}(z)+\wh C_n(z)\wh
\Theta_{2n+2}(z)\left(I_{\sD_{\Gamma_{2n}}}-z\wh
A_n(z)\wh\Theta_{2n+2}(z) \right)^{-1} \wh B_n(z).
\end{array}
\]
Then from \eqref{functt222}, \eqref{newq}, and \eqref{functt2}
 we get \eqref{connectodd}.
\end{proof}
\begin{theorem}
\label{schpr2} Let
$$\Gamma_0\in\bL(\sM,\sN),\;
\Gamma_1,\ldots,\; \Gamma_{2n}$$
 be a choice sequence. Suppose
$
\sD_{\Gamma_{2n}}\ne\{0\},\;\sD_{\Gamma^*_{2n}}\ne\{0\}.
$
 Then the formula
\begin{equation}
\label{solut1}
\Theta(z)=\theta_n^{(0)}(z)+c_n(z)\cE(z)\left(I_{\sD_{\Gamma_{2n}}}-a_n(z)\cE(z)\right)^{-1}
b_n(z),\; z\in\dD
\end{equation}
gives a one-to-one correspondence between all functions
$\cE\in\bS(\sD_{\Gamma_{2n}},\sD_{\Gamma^*_{2n}})$ and all functions
$\Theta\in\bS(\sM,\sN)$ having given choice sequence
$\Gamma_0,\Gamma_1,\ldots,\Gamma_{2n}$ as their first $2n+1$ Schur
parameters. Moreover the Schur parameters of the function $\Theta$
given by \eqref{solut1} are
\[
\Gamma_0,\Gamma_1,\ldots,\Gamma_{2n},\gamma^{(\cE)}_0,\gamma^{(\cE)}_1,\ldots,
\]
where
$\gamma^{(\cE)}_0\in\bL(\sD_{\Gamma_{2n}},\sD_{\Gamma^*_{2n}}),\gamma^{(\cE)}_1,\ldots$
are the Schur parameters of $\cE(z)$.
\end{theorem}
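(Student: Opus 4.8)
The plan is to reduce the theorem to Theorem \ref{connect21} in exactly the same way that Theorem \ref{schpr1} was obtained from Theorem \ref{desr}, using the one-to-one correspondence of Theorem \ref{SchurAlg} to splice choice sequences together. Observe first that the hypothesis $\sD_{\Gamma_{2n}}\ne\{0\}$, $\sD_{\Gamma^*_{2n}}\ne\{0\}$ forces each of $\Gamma_0,\ldots,\Gamma_{2n}$ to be neither isometric nor co-isometric, for if some $\Gamma_j$, $j\le 2n$, were isometric (resp.\ co-isometric) the space $\sD_{\Gamma_k}$ (resp.\ $\sD_{\Gamma^*_k}$) would be $\{0\}$ for every $k\ge j$, contradicting the hypothesis at $k=2n$. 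Hence the operator functions $\theta^{(0)}_n,a_n,b_n,c_n$ of \eqref{functt2} and the associated function $\Theta_{2n+1}$ are genuinely defined, and Theorem \ref{connect21} is applicable.

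First I would treat the forward implication. Let $\Theta\in\bS(\sM,\sN)$ have $\Gamma_0,\ldots,\Gamma_{2n}$ as its first $2n+1$ Schur parameters, and let $\Gamma_{2n+1},\Gamma_{2n+2},\ldots$ be the remaining Schur parameters of $\Theta$ (a possibly finite list forming a choice sequence, with $\Gamma_{2n+1}\in\bL(\sD_{\Gamma_{2n}},\sD_{\Gamma^*_{2n}})$). By Theorem \ref{SchurAlg} there is a unique $\cE\in\bS(\sD_{\Gamma_{2n}},\sD_{\Gamma^*_{2n}})$ whose Schur parameters are precisely $\Gamma_{2n+1},\Gamma_{2n+2},\ldots$. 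Since the Schur parameters of the $(2n+1)$-st associated function $\Theta_{2n+1}$ are obtained from those of $\Theta$ by deleting the first $2n+1$ of them, $\Theta_{2n+1}$ and $\cE$ have the same Schur parameters, so $\Theta_{2n+1}(z)=\cE(z)$ on $\dD$ again by the uniqueness in Theorem \ref{SchurAlg}; substituting this into the second relation of \eqref{connectodd} (Theorem \ref{connect21}) gives \eqref{solut1}.

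Next I would do the converse. Given $\cE\in\bS(\sD_{\Gamma_{2n}},\sD_{\Gamma^*_{2n}})$ with Schur parameters $\gamma^{(\cE)}_0\in\bL(\sD_{\Gamma_{2n}},\sD_{\Gamma^*_{2n}}),\gamma^{(\cE)}_1,\ldots$, the list
\[
\Gamma_0,\Gamma_1,\ldots,\Gamma_{2n},\gamma^{(\cE)}_0,\gamma^{(\cE)}_1,\ldots
\]
is again a choice sequence, since $\gamma^{(\cE)}_0$ maps $\sD_{\Gamma_{2n}}$ into $\sD_{\Gamma^*_{2n}}$ and the tail is a choice sequence by construction. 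By Theorem \ref{SchurAlg} there is a unique $\Theta\in\bS(\sM,\sN)$ having exactly this sequence as its Schur parameters; its first $2n+1$ parameters are $\Gamma_0,\ldots,\Gamma_{2n}$ and, by the argument above, $\Theta_{2n+1}(z)=\cE(z)$ on $\dD$, so Theorem \ref{connect21} again yields \eqref{solut1}. Since these two constructions are manifestly inverse to one another and both $\Theta\mapsto\{\Gamma_k\}$ and $\cE\mapsto\{\gamma^{(\cE)}_k\}$ are bijections by Theorem \ref{SchurAlg}, \eqref{solut1} gives the asserted one-to-one correspondence, and the Schur parameters of the $\Theta$ produced from $\cE$ are as stated.

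The only step that needs genuine care — everything else being bookkeeping — is the identification $\Theta_{2n+1}=\cE$ \emph{as functions between the same pair of spaces} $\sD_{\Gamma_{2n}}$ and $\sD_{\Gamma^*_{2n}}$, together with the inclusion of the degenerate cases in which $\cE$ has only finitely many Schur parameters (some $\gamma^{(\cE)}_k$ isometric, co-isometric or unitary). The former is exactly what one gets from Theorem \ref{SchurAlg} by applying the correspondence between Schur class functions and their Schur parameters simultaneously to $\Theta$ and to each of its associated functions; the latter is already absorbed into Theorem \ref{connect21}, which rests on the full case analysis of Theorem \ref{formula1} and hence is valid with no non-degeneracy assumption on the tail.
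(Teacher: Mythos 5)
Your proposal is correct and is essentially the argument the paper intends: Theorem \ref{schpr2} is stated without proof precisely because it follows from Theorem \ref{connect21} by the same splicing-of-choice-sequences argument (via Theorem \ref{SchurAlg} and the identification $\Theta_{2n+1}=\cE$) that the paper writes out explicitly in its proof of Theorem \ref{schpr1} from Theorem \ref{desr}. Your preliminary remark that the hypotheses force $\Gamma_0,\ldots,\Gamma_{2n}$ to be neither isometric nor co-isometric is a correct and harmless addition not spelled out in the paper.
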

\begin{corollary}
\label{unq2} Let
$$\Gamma_0\in\bL(\sM,\sN),\;
\Gamma_1,\ldots,\; \Gamma_{2n+1}$$
 be a choice sequence. Suppose $\Gamma_{2n+1}$ is either isometry or
co-isometry.
 Then
\begin{equation}
\label{solutun2}
\Theta(z)=\theta_n^{(0)}(z)+c_n(z)\Gamma_{2n+1}\left(I_{\sD_{\Gamma_{2n}}}-a_n(z)\Gamma_{2n+1}\right)^{-1}
b_n(z),\; z\in\dD
\end{equation}
is a unique function from $\bS(\sM,\sN)$ having
$\Gamma_0,\Gamma_1,\ldots\Gamma_{2n+1}$ as its first $2n+1$ Schur
parameters.

\end{corollary}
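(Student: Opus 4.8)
The plan is to deduce the statement from Theorem \ref{schpr2}, exactly as Corollary \ref{unq1} is deduced from Theorem \ref{schpr1}: in the parametrization \eqref{solut1} I would freeze the free parameter to the constant function $\cE(z)\equiv\Gamma_{2n+1}$, which is admissible because a constant contraction is a Schur class function. Throughout one keeps the standing hypotheses $\sD_{\Gamma_{2n}}\ne\{0\}$, $\sD_{\Gamma^*_{2n}}\ne\{0\}$ of this subsection so that Theorems \ref{connect21} and \ref{schpr2} apply (they hold automatically unless the interpolation is already forced at an earlier step).

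First I would identify the Schur parameters of this constant $\cE$. Since $\Gamma_{2n+1}$ is isometric or co-isometric, one of $\sD_{\Gamma_{2n+1}}$, $\sD_{\Gamma^*_{2n+1}}$ is $\{0\}$, so in the M\"obius representation \eqref{MREP} of $\cE$ the pertinent defect operator is $0$ and $\cE$ has M\"obius parameter $0$; hence its Schur parameters are $\gamma_0^{(\cE)}=\Gamma_{2n+1}$ and $\gamma_k^{(\cE)}=0$ for $k\ge 1$. Substituting $\cE\equiv\Gamma_{2n+1}$ into \eqref{solut1} turns that formula into \eqref{solutun2}, the resolvent there being well defined on $\dD$ since $\|a_n(z)\|\le|z|<1$. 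Theorem \ref{schpr2} then gives immediately that the function $\Theta$ of \eqref{solutun2} lies in $\bS(\sM,\sN)$ and has Schur parameters $\Gamma_0,\ldots,\Gamma_{2n},\Gamma_{2n+1},0,0,\ldots$; in particular $\Gamma_0,\ldots,\Gamma_{2n+1}$ are its first Schur parameters.

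For uniqueness I would take an arbitrary $\Psi\in\bS(\sM,\sN)$ whose first Schur parameters are $\Gamma_0,\ldots,\Gamma_{2n+1}$ and consider its associated function $\Psi_{2n+1}\in\bS(\sD_{\Gamma_{2n}},\sD_{\Gamma^*_{2n}})$ from the Schur algorithm, for which $\Psi_{2n+1}(0)=\Gamma_{2n+1}$. Applying \eqref{MREP} to $\Psi_{2n+1}$: the hypothesis that $\Gamma_{2n+1}$ is isometric (resp.\ co-isometric) gives $D_{\Gamma_{2n+1}}=0$ (resp.\ $D_{\Gamma^*_{2n+1}}=0$), so the second summand in \eqref{MREP} vanishes and $\Psi_{2n+1}(z)\equiv\Gamma_{2n+1}$; equivalently, in the bijection of Theorem \ref{schpr2} the constraint $\cE(0)=\Gamma_{2n+1}$ admits only the constant $\cE$. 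Hence $\Psi$ and the $\Theta$ constructed above have the same full Schur parameter sequence $\Gamma_0,\ldots,\Gamma_{2n+1},0,0,\ldots$, so $\Psi=\Theta$ by the one-to-one correspondence of Theorem \ref{SchurAlg}. I do not anticipate a genuine obstacle; the only care needed is the odd-index bookkeeping (which, as in Theorem \ref{connect21}, passes through $\wh\Theta(z)=z\Theta(z)$) and handling the isometric, co-isometric and unitary alternatives for $\Gamma_{2n+1}$ uniformly, as already arranged by the termination rule for the Schur algorithm recalled in Section \ref{pre}.
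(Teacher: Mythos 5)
Your proof is correct and follows exactly the route the paper intends: the corollary is stated without proof as an immediate consequence of Theorem \ref{schpr2}, obtained by specializing the free parameter to the constant $\cE\equiv\Gamma_{2n+1}$ and observing, via the M\"obius representation \eqref{MREP}, that a vanishing defect operator of $\cE(0)$ forces any admissible parameter with $\cE(0)=\Gamma_{2n+1}$ to be that constant. Your filling in of the Schur-parameter bookkeeping ($\Gamma_{2n+1},0,0,\ldots$, or termination in the unitary case) and the appeal to the bijections of Theorems \ref{schpr2} and \ref{SchurAlg} is exactly what is needed.
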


\section{Descriptions of all solutions to the Schur problem} \label{reshen}

We describe the algorithm for the solutions to the Schur problem
involving sub-matrices of block-operator CMV matrices.

Let the Schur sequence $C_0,\dots, C_N\in \bL(\sM,\sN)$ be given.
Calculate $(D^2_{T_N})_\sM$ and $(D^2_{\wt T_N})_\sN$, where $T_N$
and $\wt T_N$ are the Toeplitz matrices of the form \eqref{toepn}
and \eqref{wttoepn}.

Suppose $(D^2_{T_N})_\sM\ne 0$ and $(D^2_{\wt T_N})_\sN\ne 0$. Find
the choice sequence
$$\Gamma_0=C_0, \;\Gamma_1, \ldots,\Gamma_N,$$
corresponding to the data $\{C_k\}_{k=0}^N$ (see Subsection
\ref{shseq}). Then any two solutions of the Schur problem differ by
the Schur parameters, which start with the number $N+1$.

If $N=2n+1$, find the matrix $\cS_{n,0}$ (see \eqref{tgeo}) and
calculate the functions \eqref{functt1}. The formula \eqref{solut}
gives all solutions to the Schur problem.

If $N=2n$, then calculate the matrix $\wh \cS_{n,0}$ constructed by
means of the choice sequence
\[
0\in\bL(\sM,\sN),\;\Gamma_0,\ldots, \Gamma_N,
\]
and calculate the functions \eqref{functt2}. The formula
\eqref{solut1} gives all solutions.

Thus, all solutions are given by the fractional linear
transformation
\begin{equation}
\label{opresh}
\begin{array}{l}
\Theta(z)=\Theta^{(0)}_N(z)+C_N(z)\cE(z)\left(I_{\sD_{\Gamma_N}}-A_N(z)\cE(z)\right)^{-1}B_N(z)\\
\qquad=\Theta^{(0)}_N(z)+C_N(z)\left(I_{\sD_{\Gamma^*_N}}-\cE(z)A_N(z)\right)^{-1}\cE(z)B_N(z),
\end{array}
\end{equation}
where $\cE(z)$ is an arbitrary function from
$\bS(\sD_{\Gamma_N},\sD_{\Gamma^*_N})$,

\[
\Theta^{(0)}_N=\left\{\begin{array}{l}\Gamma_0+zD_{\Gamma^*_0}\begin{bmatrix}\Gamma_1&D_{\Gamma^*_1}\end{bmatrix}
\left(P_{\cH_1}\left(I_{\cK_n}-z\cS_{n,0}\right)^{-1}\uphar\sD_{\Gamma_0}\right)D_{\Gamma_0},\;
N=2n+1\\
\begin{bmatrix}\Gamma_0&D_{\Gamma^*_0}\end{bmatrix}
\left(P_{\wh\cH_1}\left(I_{\wh\cK_n}-z\wh
\cS_{n,0}\right)^{-1}\uphar\sM\right),\; N=2n
\end{array}\right.,
\]
\[
C_N(z)=\left\{\begin{array}{l}zD_{\Gamma^*_0}\begin{bmatrix}\Gamma_1&D_{\Gamma^*_1}\end{bmatrix}
\left(P_{\cH_1}\left(I_{\cK_n}-z\cS_{n,0}\right)^{-1}\uphar\sD_{\Gamma^*_{2n+1}}\right),\;
N=2n+1\\
\begin{bmatrix}\Gamma_0&D_{\Gamma^*_0}\end{bmatrix}
\left(P_{\wh\cH_1}\left(I_{\wh\cK_n}-z\wh
\cS_{n,0}\right)^{-1}\uphar\sD_{\Gamma^*_{2n}}\right),\; N=2n
\end{array}\right.,
\]
\[
A_N(z)=\left\{\begin{array}{l}z\begin{bmatrix}D_{\Gamma_{2n+1}}&-\Gamma^*_{2n+1}
\end{bmatrix}\left(P_{\cH_{n+1}}\left(I_{\cK_n}-z\cS_{n,0}\right)^{-1}\uphar\sD_{\Gamma^*_{2n+1}}\right),\;
N=2n+1\\
z\begin{bmatrix}D_{\Gamma_{2n}}&-\Gamma^*_{2n}
\end{bmatrix}\left(P_{\wh\cH_{n+1}}\left(I_{\wh\cK_n}-z\wh
\cS_{n,0}\right)^{-1}\uphar\sD_{\Gamma^*_{2n}}\right),\; N=2n
\end{array}\right.,
\]
\[
B_N(z)=\left\{\begin{array}{l}z\begin{bmatrix}D_{\Gamma_{2n+1}}&-\Gamma^*_{2n+1}
\end{bmatrix}\left(P_{\cH_{n+1}}\left(I_{\cK_n}-z\cS_{n,0}\right)^{-1}\uphar\sD_{\Gamma_{0}}\right)D_{\Gamma_0},\;
N=2n+1\\
z\begin{bmatrix}D_{\Gamma_{2n}}&-\Gamma^*_{2n}
\end{bmatrix}\left(P_{\wh\cH_{n+1}}\left(I_{\wh\cK_n}-z\wh
\cS_{n,0}\right)^{-1}\uphar\sM\right),\; N=2n
\end{array}\right.,
\]
and the operator-valued function for $z\in\dD$
\[
Q_N(z)=\begin{bmatrix}\Theta^{(0)}_N(z)& C_N(z)\cr
B_N(z)&A_N(z)\end{bmatrix}:\begin{array}{l}\sM\\\oplus\\\sD_{\Gamma^*_{N}}\end{array}\to
\begin{array}{l}\sN\\\oplus\\\sD_{\Gamma_{N}}\end{array}
\]
belongs to the Schur class.
Observe that the function $\Theta^{(0)}_N,$ corresponding to the
parameter $\cE\equiv 0\in\bL(\sD_{\Gamma_{N}},\sD_{\Gamma^*_{N}})$ in \eqref{opresh},
is the central solutions to the Schur problems \cite{DFK},
\cite{FrKirLas}, \cite{ArlIEOT2011}.

Parametrization \eqref{opresh} is similar to known parameterizations
\cite{Arov1995}, \cite{BC}, \cite{Dym}, \cite{FoFr},
\cite{FoFrGoKa}, \cite{FrKirLas}, \cite{KKhYu1987} which are
obtained by another methods.

 Suppose $(D^2_{T_N})_\sM= 0$. Find $p$,
$p\le N$ such that $(D^2_{T_p})_\sM= 0,$ but $(D^2_{T_{p-1}})_\sM\ne
0$. Then using \eqref{solutun} for $p=2n+2$ or \eqref{solutun2} for
$p=2n+1$ we get
\[
\begin{array}{l}
\Theta(z)=\Theta^{(0)}_{p-1}(z)+C_{p-1}(z)\Gamma_p\left(I_{\sD_{\Gamma_{p-1}}}-A_{p-1}(z)\Gamma_p\right)^{-1}B_{p-1}
(z)\\
\qquad=\Theta^{(0)}_{p-1}(z)+C_{p-1}(z)\left(I_{\sD_{\Gamma^*_{p-1}}}-\Gamma_p
A_{p-1}(z)\right)^{-1}\Gamma_pB_{p-1}(z).
\end{array}
\]
The case $(D^2_{\wt T_N})_\sN=0$ is similar to the previous one.

\appendix%{Appendix}
\section
%\subsection
{Special cases of block operator CMV matrices } \label{REST}

 Let $\{\Gamma_n\}$ be the Schur parameters of the function
$\Theta\in {\bf S}(\sM,\sN)$. Suppose  $\Gamma_m$ is an isometry
(respect., co-isometry, unitary) for some $m\ge 0$. Then
$\Theta_m(z)=\Gamma_m$ for all $z\in\dD$ and
\[
\begin{array}{l}
\Theta_{m-1}(z)=\Gamma_{m-1}+z
D_{\Gamma^*_{m-1}}\Gamma_m(I_{\sD_{\Gamma_{m-1}}}+z\Gamma^*_{m-1}\Gamma_m)^{-1}D_{\Gamma_{m-1}},\\
\Theta_{m-2}(z)=\Gamma_{m-2}+z
D_{\Gamma^*_{m-2}}\Theta_{m-1}(z)(I_{\sD_{\Gamma_{m-2}}}+z\Gamma^*_{m-2}\Theta_{m-1}(z))^{-1}D_{\Gamma_{m-2}},\\
\ldots \quad\ldots\quad \ldots\quad \ldots\quad \ldots \quad
\ldots\quad \ldots\quad \ldots\quad \ldots\quad \ldots\quad
\ldots\quad \ldots,\\
\Theta(z)=\Gamma_{0}+z
D_{\Gamma^*_{0}}\Theta_{1}(z)(I_{\sD_{\Gamma_0}}+z\Gamma^*_{0}\Theta_{1}(z))^{-1}D_{\Gamma_{0}},\;
z\in\dD.
\end{array}
\]
The function $\Theta$ is the transfer function of the simple
conservative systems constructed by means of its Schur parameters
$\{\Gamma_n\}$ and the corresponding block operator CMV matrices
$\cU_0$ and $\wt\cU_0$ \cite{ArlMFAT2009}. Here we present the
explicit form of block operator CMV and truncated CMV matrices. In
particular we revise some misprints in \cite{ArlMFAT2009}.  Notice
that if $\Gamma_m$ is isometric (respect., co-isometric), then
\begin{enumerate}
\item
$\sD_{\Gamma^*_n}=\sD_{\Gamma^*_{m}}$,
$D_{\Gamma^*_{n}}=I_{\sD_{\Gamma^*_{m}}},$
$\Gamma_n=0:\{0\}\to\sD_{\Gamma^*_{m}}$ for $n> m$ (respect.,
$\sD_{\Gamma_n}=\sD_{\Gamma_{m}}$,
$D_{\Gamma_{n}}=I_{\sD_{\Gamma_{m}}}$, $\Gamma_n=0:\sD_{\Gamma_n}\to
\{0\}$ for $n> m$);
\item
 in the definitions of the state spaces
$\sH_0=\sH_0(\{\Gamma_n\}_{n\ge 0})$ and
$\wt\sH_0=\wt\sH_0(\{\Gamma_n\}_{n\ge 0})$ we replace
$\sD_{\Gamma_n}$ with $\{0\}$ (respect., $\sD_{\Gamma^*_n}$ with
$\{0\}$) for $n\ge m$, and $\sD_{\Gamma^*_n}$
 by $\sD_{\Gamma^*_m}$ (respect.,
$\sD_{\Gamma_n}$ by $\sD_{\Gamma_m}$) for $n> m$.
\item
the corresponding unitary elementary rotation takes the row
(respect., the column) form, i.e,
\[
\begin{array}{l}
{\bf
J}^{(r)}_{\Gamma_0}=\begin{bmatrix}\Gamma_0&I_{\sD_{\Gamma^*_0}}
\end{bmatrix}:\begin{array}{l}\sM\\\oplus\\\sD_{\Gamma^*_{0}}\end{array}\to\sN\;
\left (\mbox{respect},\;{\bf
J}^{(c)}_{\Gamma_0}=\begin{bmatrix}\Gamma_0\cr D_{\Gamma_0}
\end{bmatrix}:\sM\to
\begin{array}{l}\sN\\\oplus\\\sD_{\Gamma_{0}}\end{array}\right),\\
\\
{\bf
J}^{(r)}_{\Gamma_m}=\begin{bmatrix}\Gamma_m&I_{\sD_{\Gamma^*_m}}
\end{bmatrix}:\begin{array}{l}\sD_{\Gamma_{m-1}}\\\oplus\\\sD_{\Gamma^*_{m}}\end{array}
\to\sD_{\Gamma^*_{m-1}}\\
\qquad\qquad \left(\mbox{respect.},\; {\bf
J}^{(c)}_{\Gamma_m}=\begin{bmatrix}\Gamma_m\cr D_{\Gamma_m}
\end{bmatrix}:\sD_{\Gamma_{m-1}}\to
\begin{array}{l}\sD_{\Gamma^*_{m-1}}\\\oplus\\\sD_{\Gamma_{m}}\end{array}\right),\; m\ge 1.
\end{array}
\]
\end{enumerate}
Therefore, in definitions \eqref{OLM} of the block diagonal operator
matrices
\[
\cL_0=\cL_0(\{\Gamma_n\}_{n\ge 0}),\; \cM_0=\cM_0(\{\Gamma_n\}_{n\ge
0}),\;\mbox{and}\; \wt\cM_0=\wt\cM_0(\{\Gamma_n\}_{n\ge 0})
\]
we will replace
\begin{itemize}
\item ${\bf J}_{\Gamma_m}$ by ${\bf J}^{(r)}_{\Gamma_m}$
 and ${\bf J}_{\Gamma_n}$ by
$I_{\sD_{\Gamma^*_m}}$ for $n>m$, when $\Gamma_m$ is isometry,
\item ${\bf J}_{\Gamma_m}$ by ${\bf J}^{(c)}_{\Gamma_m}$,
 and ${\bf J}_{\Gamma_n}$ by
$I_{\sD_{\Gamma_m}}$ for $n>m$, when $\Gamma_m$ is co-isometry,
\item
${\bf J}_{\Gamma_m}$ by $\Gamma_m$, when $\Gamma_m$ is unitary.
\end{itemize}
In all these cases the block operators CMV matrices
$\cU_0=\cU_0(\{\Gamma_n\}_{n\ge 0})$ and
$\wt\cU_0=\wt\cU_0(\{\Gamma_n\}_{n\ge 0})$ are defined by means the
products
%\[
$\cU_0=\cL_0\cM_0,\;\wt\cU_0=\wt\cM_0\cL_0.$
%\]
These matrices are five block-diagonal. In the case when the
operator $\Gamma_m$ is unitary the block operator CMV matrices
$\cU_0$ and $\wt\cU_0$ are finite and otherwise they are
semi-infinite.

As before the truncated block operator CMV matrices
$\cT_0=\cT_0((\{\Gamma_n\}_{n\ge 0})$ and
$\wt\cT_0=\wt\cT_0(\{\Gamma_n\}_{n\ge 0})$ are defined by
\eqref{TRUNC} and \eqref{TRUNCT}, i.e.,
$$\cT_0=P_{\sH_0}\cU_0\uphar\sH_0,\;\wt\cT_0=P_{\wt\sH_0}\wt\cU_0\uphar\wt\sH_0.$$
The operators $\cT_0$ and $\wt\cT_0$ are unitarily equivalent
completely non-unitary contractions and Proposition \ref{COIUN} hold
true. The operators given by truncated block operator CMV matrices
$\cT_m$ and $\wt\cT_m$ obtaining from $\cU_0$ and $\wt\cU_0$ by
deleting first $m+1$ rows and $m+1$ columns are
\begin{itemize}
\item
co-shifts of the form
\[
\cT_m=\wt\cT_m=\begin{bmatrix}0&I_{\sD_{\Gamma^*_m}}&0&0&\ldots\cr
0&0&I_{\sD_{\Gamma^*_m}}&0&\ldots\cr
0&0&0&I_{\sD_{\Gamma^*_m}}&\ldots\cr
\vdots&\vdots&\vdots&\vdots&\vdots
\end{bmatrix}:\begin{array}{l}\sD_{\Gamma^*_m}\\\oplus\\\sD_{\Gamma^*_m}\\\oplus\\\vdots
\end{array}\to \begin{array}{l}\sD_{\Gamma^*_m}\\\oplus\\\sD_{\Gamma^*_m}\\\oplus\\\vdots
\end{array},
\]
when $\Gamma_m$ is isometry,
\item
 the unilateral shifts of the form
\[
\cT_m=\wt\cT_m=\begin{bmatrix}0&0&0&0&\ldots\cr
I_{\sD_{\Gamma_m}}&0&0&0&\ldots\cr
0&I_{\sD_{\Gamma_m}}&0&0&\ldots\cr
\vdots&\vdots&\vdots&\vdots&\vdots
\end{bmatrix}:\begin{array}{l}\sD_{\Gamma_m}\\\oplus\\\sD_{\Gamma_m}\\\oplus\\\vdots
\end{array}\to \begin{array}{l}\sD_{\Gamma_m}\\\oplus\\\sD_{\Gamma_m}\\\oplus\\\vdots
\end{array},
\]
 when $\Gamma_m$ is co-isometry.
 \end{itemize}
One can see that Proposition \ref{COIUN} remains true.

The conservative systems
\[
\zeta_0=\{\cU_0;\sM,\sN,\sH_0\},\;\wt\zeta_0=\{\wt\cU_0;\sM,\sN,\wt\sH_0\}.
\]
are simple and unitarily equivalent and, moreover, Theorem
\ref{cmvmod} remains valid.

In order to obtain precise forms of $\cU_0$, $\wt\cU_0$, $\cT_0$,
and $\wt\cT_0$ one can consider the following cases:
\begin{enumerate}
\item $\Gamma_{2N}$ is isometric (co-isometric) for some $N$,
\item $\Gamma_{2N+1}$ is isometric (co-isometric) for some
$N$,
\item the operator $\Gamma_{2N}$ is unitary for some $N$,
\item the operator $\Gamma_{2N+1}$ is unitary for some $N$.
\end{enumerate}
In the following we consider all these situations and will give the
forms of truncated CMV matrices. We use the sub-matrices  defined by
\eqref{submtr1} and \eqref{submtr2}.

\subsection{$\Gamma_{2N}$ is isometric}
Define
\[
\begin{array}{l}
\sH_0=\wt\sH_0=\sD_{\Gamma^*_0}\bigoplus
\sD_{\Gamma^*_0}\bigoplus\ldots,\;
\mbox{if}\;N=0,\\
\sH_0=\left(
\bigoplus\limits_{n=0}^{N-1}\begin{array}{l}\sD_{\Gamma_{2n}}\\\oplus\\\sD_{\Gamma^*_{2n+1}}\end{array}\right)
\bigoplus
\sD_{\Gamma^*_{2N}}\bigoplus\sD_{\Gamma^*_{2N}}\bigoplus\ldots\bigoplus\sD_{\Gamma^*_{2N}}\bigoplus\ldots,\\
\wt\sH_0=\left(
\bigoplus\limits_{n=0}^{N-1}\begin{array}{l}\sD_{\Gamma^*_{2n}}\\\oplus\\\sD_{\Gamma_{2n+1}}\end{array}\right)
\bigoplus
\sD_{\Gamma^*_{2N}}\bigoplus\sD_{\Gamma^*_{2N}}\bigoplus\ldots\bigoplus\sD_{\Gamma^*_{2N}}\bigoplus\ldots,
\; N\ge 1.
\end{array}
\]
 Define the unitary operators
\[
\begin{array}{l}
\cM_0=I_{\sM\oplus\sH_0},\;\wt\cM_0=I_{\sN\oplus\sH_0},\;
N=0,\\
\cM_0=I_\sM\bigoplus\left(\bigoplus\limits_{n=1}^{N}{\bf
J}_{\Gamma_{2n-1}}\right)\bigoplus I_{\sD_{\Gamma^*_{2N}}}\bigoplus
I_{\sD_{\Gamma^*_{2N}}}\bigoplus\ldots
:\sM\bigoplus\sH_0\to \sM\bigoplus\wt\sH_0\\
\wt \cM_0=I_\sN\bigoplus\left(\bigoplus\limits_{n=1}^{N}{\bf
J}_{\Gamma_{2n-1}}\right)\bigoplus I_{\sD_{\Gamma^*_{2N}}}\bigoplus
I_{\sD_{\Gamma^*_{2N}}}\bigoplus\ldots:\sN\bigoplus\sH_0\to\sN\bigoplus\wt\sH_0,\;
N\ge 1.
\end{array}
\]
The unitary  operator
$\cL_0:\sM\bigoplus\wt\sH_0\to\sN\bigoplus\sH_0$ is defined as
follows
\[
\begin{array}{l}
\cL_0=\left\{\begin{array}{l} {\bf J}^{(r)}_{\Gamma_0}\bigoplus
I_{\sD_{\Gamma^*_0}}\bigoplus I_{\sD_{\Gamma^*_0}}\bigoplus\ldots,\; N=0,\\
{\bf J}_{\Gamma_0}\bigoplus{\bf
J}^{(r)}_{\Gamma_2}%\begin{bmatrix}\Gamma_{2}&I_{\sD_{\Gamma^*_{2}}}\end{bmatrix}
\bigoplus I_{\sD_{\Gamma^*_{2}}}\bigoplus
I_{\sD_{\Gamma^*_{2}}}\bigoplus\ldots,\;\mbox{if}\; N=1,\\
{\bf J}_{\Gamma_0}\bigoplus\left(\bigoplus\limits_{n=1}^{N-1}{\bf
J}_{\Gamma_{2n}}\right)\bigoplus {\bf J}^{(r)}_{\Gamma_{2N}}
%\begin{bmatrix}\Gamma_{2N}&I_{\sD_{\Gamma^*_{2N}}}\end{bmatrix}
\bigoplus I_{\sD_{\Gamma^*_{2N}}}\bigoplus
I_{\sD_{\Gamma^*_{2N}}}\bigoplus\ldots,\;\mbox{if}\; N\ge 2
\end{array}\right..%:\\
%\qquad\qquad\sM\bigoplus\wt\sH_0\to\sN\bigoplus\sH_0.
\end{array}
\]
Define
%\[
%\begin{array}{l}
$\cU_0=\cL_0\cM_0, \wt\cU_0=\wt\cM_0\cL_0.$ In particular, if the
operator $\Gamma_0$ is isometric, then
\[
\cU_0=\wt\cU_0=\begin{bmatrix}\Gamma_0&I_{\sD_{\Gamma^*_0}}&0&0&0&0&\ldots\cr
0&0&I_{\sD_{\Gamma^*_0}}&0&0&0&\ldots\cr
0&0&0&I_{\sD_{\Gamma^*_0}}&0&0&\ldots\cr
0&0&0&0&I_{\sD_{\Gamma^*_0}}&0&\ldots\cr
\vdots&\vdots&\vdots&\vdots&\vdots&\vdots&\vdots
\end{bmatrix}.
\]
The block operator truncted CMV matrices $\cT_0$ and $\wt \cT_0$ are
the products (for $N\ge 1$):
\[
\begin{array}{l}
\cT_0=\left({-\Gamma^*_0}\oplus\left(\bigoplus\limits_{n=1}^{N-1}{\bf
J}_{\Gamma_{2n}}\right)\oplus {\bf J}^{(r)}_{\Gamma_{2N}} \oplus
I_{\sD_{\Gamma^*_{2N}}}\oplus
I_{\sD_{\Gamma^*_{2N}}}\oplus\ldots\right)\\
\qquad\times\left(\left(\bigoplus\limits_{n=1}^{N}{\bf
J}_{\Gamma_{2n-1}}\right)\oplus I_{\sD_{\Gamma^*_{2N}}}\oplus
I_{\sD_{\Gamma^*_{2N}}}\oplus\ldots \right),\\
\wt\cT_0=\left(\left(\bigoplus\limits_{n=1}^{N}{\bf
J}_{\Gamma_{2n-1}}\right)\oplus I_{\sD_{\Gamma^*_{2N}}}\oplus
I_{\sD_{\Gamma^*_{2N}}}\oplus\ldots \right)\\
\qquad\times
\left({-\Gamma^*_0}\oplus\left(\bigoplus\limits_{n=1}^{N-1}{\bf
J}_{\Gamma_{2n}}\right)\oplus {\bf J}^{(r)}_{\Gamma_{2N}} \oplus
I_{\sD_{\Gamma^*_{2N}}}\oplus
I_{\sD_{\Gamma^*_{2N}}}\oplus\ldots\right).
\end{array}
\]
Calculations give {\footnotesize
\[
\cT_0=\left[\begin{array}{c|c} {\cS_{N-1}}&
\begin{array}{ccc}
0&0&\ldots\cr \vdots&\vdots&\vdots\cr 0&0&\ldots\cr
I_{\sD_{\Gamma^*_{2N}}}&0&\ldots
\end{array}\\
\hline%&
%\begin{array}{c|c}
{\bf 0}&
\begin{array}{ccccc}
0&I_{\sD_{\Gamma^*_{2N}}}&0&0&\ldots\cr
0&0&I_{\sD_{\Gamma^*_{2N}}}&0&\ldots\cr
\vdots&\vdots&\vdots&\vdots&\vdots
\end{array}%&%\\
%& \mathbf{T}
\end{array}
\right],\; %\[
\wt \cT_0=\left[\begin{array}{c|c} {\wt \cS_{N-1}}&
\begin{array}{ccc}
0&0&\ldots\cr  \vdots&\vdots&\vdots\cr 0&0&\ldots \cr
D_{\Gamma^*_{2N-1}}&0&\ldots\cr -\Gamma^*_{2N-1}&0&\ldots
\end{array}\\
\hline%&
%\begin{array}{c|c}
{\bf 0}&
\begin{array}{ccccc}
0&I_{\sD_{\Gamma^*_{2N}}}&0&0&\ldots\cr
0&0&I_{\sD_{\Gamma^*_{2N}}}&0&\ldots\cr
\vdots&\vdots&\vdots&\vdots&\vdots
\end{array}%&%\\
%& \mathbf{T}
\end{array}
\right].
\]}

\subsection{$\Gamma_{2N}$ is co-isometric}  Then $\Gamma_{n}=0$, $\sD_{\Gamma_{n}}=\sD_{\Gamma_{2N}}$,
$D_{\Gamma_{n}}=I_{\sD_{\Gamma_{2N}}}$ for $n> 2N$. Define
\[
\begin{array}{l}
\sH_0=\wt\sH_0=\bigoplus\limits_{n=0}^\infty\sD_{\Gamma_0},\;
\mbox{if}\;N=0,\\
\sH_0=\left(
\bigoplus\limits_{n=0}^{N-1}\begin{array}{l}\sD_{\Gamma_{2n}}\\\oplus\\\sD_{\Gamma^*_{2n+1}}\end{array}\right)\bigoplus
\sD_{\Gamma_{2N}}\bigoplus\sD_{\Gamma_{2N}}\bigoplus\ldots\bigoplus\sD_{\Gamma_{2N}}\bigoplus\ldots,\\
%\end{array}
%\]
%\[
%\begin{array}{l}
 \wt\sH_0=\left(
\bigoplus\limits_{n=0}^{N-1}\begin{array}{l}\sD_{\Gamma^*_{2n}}\\\oplus\\\sD_{\Gamma_{2n+1}}\end{array}\right)\bigoplus
\sD_{\Gamma_{2N}}\bigoplus\sD_{\Gamma_{2N}}\bigoplus\ldots\bigoplus\sD_{\Gamma_{2N}}\bigoplus\ldots,\;\mbox{if}\;
N\ge 1.
\end{array}
\]
Define the unitary operators
$\cL_0:\sM\bigoplus\wt\sH_0\to\sN\bigoplus\sH_0$,
$\cM_0:\sM\bigoplus\sH_0\to \sM\bigoplus\wt\sH_0$, and
$\wt\cM_0:\sN\bigoplus\sH_0\to\sN\bigoplus\wt\sH_0$ as follows
\[
\begin{array}{l}
\cL_0={\bf J}^{(c)}_{\Gamma_0}\bigoplus I_{\sD_{\Gamma_0}}\bigoplus
I_{\sD_{\Gamma_0}}\bigoplus\ldots\;\mbox{if}\;N=0,\\
\cL_0={\bf
J}_{\Gamma_0}\bigoplus\left(\bigoplus\limits_{n=0}^{N-1}{\bf
J}_{\Gamma_{2n}}\right)\bigoplus{\bf J}^{(c)}_{\Gamma_{
2N}}%\begin{bmatrix}\Gamma_{2N}\cr D_{\Gamma_{2N}}\end{bmatrix}
\bigoplus I_{\sD_{\Gamma_{2N}}}\bigoplus
I_{\sD_{\Gamma_{2N}}}\bigoplus\ldots,\;\mbox{if}\; N\ge 1,\\
\cM_0=I_\sM\bigoplus I_{\sD_{\Gamma_0}}\bigoplus
I_{\sD_{\Gamma_0}}\bigoplus\ldots\;\mbox{if}\;N=0,\\
\cM_0=I_\sM\bigoplus\left(\bigoplus\limits_{n=1}^{N}{\bf
J}_{\Gamma_{2n-1}}\right)\bigoplus I_{\sD_{\Gamma_{2N}}}\bigoplus
I_{\sD_{\Gamma_{2N}}}\bigoplus\ldots,\;\mbox{if}\; N\ge 1,
\\
\wt\cM_0=I_\sN\bigoplus I_{\sD_{\Gamma_0}}\bigoplus
I_{\sD_{\Gamma_0}}\bigoplus\ldots\;\mbox{if}\;N=0,\\
 \wt
\cM_0=I_\sN\bigoplus\left(\bigoplus\limits_{n=1}^{N}{\bf
J}_{\Gamma_{2n-1}}\right)\bigoplus I_{\sD_{\Gamma_{2N}}}\bigoplus
I_{\sD_{\Gamma_{2N}}}\bigoplus\ldots,\;\mbox{if}\; N\ge 1.
\end{array}
\]
Finally define $\cU_0=\cL_0\cM_0$ and $\wt \cU_0=\wt \cM_0\cL_0$.

In particular, if the operator $\Gamma_{0}$ is co-isometric, then
$\sH_0=\wt\sH_0=\sD_{\Gamma_0}\bigoplus\sD_{\Gamma_0}\bigoplus\ldots,
$
\[
%\begin{array}{l}
\cM_0=I_{\sM\oplus\sH_0},\; \wt\cM_0=I_{\sN\oplus\sH_0},\;% ,\\
\cL_0={\bf J}^{(c)}_{\Gamma_0}\bigoplus I_{\sD_{\Gamma_0}}\bigoplus
I_{\sD_{\Gamma_0}}\bigoplus\ldots,
%\end{array}
\]
\[
\cU_0=\wt\cU_0=\begin{bmatrix}\Gamma_0&0&0&0&\ldots\cr
D_{\Gamma_0}&0&0&0&\ldots\cr
 0&I_{\sD_{\Gamma_0}}&0&0&\ldots\cr
0&0&I_{\sD_{\Gamma_0}}&0&\ldots\cr
\vdots&\vdots&\vdots&\vdots&\vdots
  \end{bmatrix}.
\]
%\end{example}
If $N\ge 1$, then truncated CMV matrices $\cT_0$ and $\wt\cT_0$ are
of the form {\footnotesize
\[
\cT_0=\left[\begin{array}{c|c} {\cS_{N-1}}&{\bf 0}\\
\hline
\begin{array}{ccccc}
0&\ldots&0&D_{\Gamma_{2N}}D_{\Gamma_{2N-1}}&-D_{\Gamma_{2N}}\Gamma^*_{2N-1}
\cr 0&\ldots&0&0&0\cr \vdots&\vdots&\vdots&\vdots&\vdots
\end{array}&
%\hline%&
%\begin{array}{c|c}
%{\bf 0}&
\begin{array}{cccc}
0&0&0&\ldots\cr I_{\sD_{\Gamma_{2N}}}&0&0&\ldots\cr
0&I_{\sD_{\Gamma_{2N}}}&0&\ldots\cr \vdots&\vdots&\vdots&\vdots
\end{array}%&%\\
%& \mathbf{T}
\end{array}
\right],
\]
\[
\wt\cT_0=\left[\begin{array}{c|c} {\wt \cS_{N-1}}&{\bf 0}\\
\hline
\begin{array}{ccccc}
0&0&\ldots&0&D_{\Gamma_{2N}} \cr 0&0&\ldots&0&0\cr
\vdots&\vdots&\vdots&\vdots&\vdots
\end{array}&
%\hline%&
%\begin{array}{c|c}
%{\bf 0}&
\begin{array}{cccc}
0&0&0&\ldots\cr I_{\sD_{\Gamma_{2N}}}&0&0&\ldots\cr
0&I_{\sD_{\Gamma_{2N}}}&0&\ldots\cr \vdots&\vdots&\vdots&\vdots
\end{array}%&%\\
%& \mathbf{T}
\end{array}
\right].
\]
}

\subsection{$\Gamma_{2N+1}$ is isometric}
 In this case $\Gamma_{n}=0$,
$\sD_{\Gamma^*_{n}}=\sD_{\Gamma^*_{2N+1}}$,
$D_{\Gamma^*_{n}}=I_{\sD_{\Gamma^*_{2N+1}}}$ for $n> 2N+1$. Define
\[
\begin{array}{l}
\sH_0=\left(
\bigoplus\limits_{n=0}^{N}\begin{array}{l}\sD_{\Gamma_{2n}}\\\oplus\\\sD_{\Gamma^*_{2n+1}}\end{array}\right)\bigoplus
\sD_{\Gamma^*_{2N+1}}\bigoplus\sD_{\Gamma^*_{2N+1}}\bigoplus\ldots\bigoplus\sD_{\Gamma^*_{2N+1}}\bigoplus\ldots,\\
  \wt \sH_0=\sD_{\Gamma^*_0}\bigoplus\sD_{\Gamma^*_1}\bigoplus\sD_{\Gamma^*_1}\bigoplus\ldots
  \bigoplus\sD_{\Gamma^*_1}\bigoplus\ldots,\quad\mbox{if}\quad
N=0,\\
 \wt\sH_0=\left(
\bigoplus\limits_{n=0}^{N-1}\begin{array}{l}\sD_{\Gamma^*_{2n}}\\\oplus\\\sD_{\Gamma_{2n+1}}\end{array}\right)\bigoplus
\sD_{\Gamma^*_{2N}}\bigoplus\sD_{\Gamma^*_{2N+1}}\bigoplus\ldots\bigoplus\sD_{\Gamma^*_{2N+1}}\bigoplus\ldots,\quad\mbox{if}\quad
N\ge 1.
\end{array}
\]

 Define the unitary operators
\[
\begin{array}{l}
\cM_0=I_\sM\bigoplus {\bf
J}^{(r)}_{\Gamma_1}%\begin{bmatrix}\Gamma_1&I_{\sD_{\Gamma^*_1}}\end{bmatrix}
\bigoplus I _{\sD_{\Gamma^*_1}}\bigoplus
I_{\sD_{\Gamma^*_1}}\bigoplus\ldots (N=0),\\
 \cM_0=I_\sM\bigoplus\left(\bigoplus\limits_{n=1}^{N}{\bf
J}_{\Gamma_{2n-1}}\right)\bigoplus{\bf
J}^{(r)}_{\Gamma_{2N+1}}%\begin{bmatrix}\Gamma_{2N+1}&I_{\sD_{\Gamma^*_{2N+1}}}\end{bmatrix}
\bigoplus I_{\sD_{\Gamma^*_{2N+1}}}
\bigoplus I_{\sD_{\Gamma^*_{2N+1}}}\bigoplus\ldots (N\ge 1),\\
\cL_0={\bf J}_{\Gamma_0}\bigoplus I _{\sD_{\Gamma^*_1}}\bigoplus
I_{\sD_{\Gamma^*_1}}\bigoplus\ldots (N=0),\\
 \cL_0={\bf
J}_{\Gamma_0}\bigoplus\left(\bigoplus\limits_{n=1}^{N}{\bf
J}_{\Gamma_{2n}}\right)
\bigoplus I_{\sD_{\Gamma^*_{2N+1}}}\bigoplus I_{\sD_{\Gamma^*_{2N+1}}}
\bigoplus\ldots(N\ge 1),\\
\wt \cM_0=I_\sN\bigoplus{\bf J}^{(r)}_{\Gamma_1}
%\begin{bmatrix}\Gamma_1&I_{\sD_{\Gamma^*_1}}\end{bmatrix}
\bigoplus I _{\sD_{\Gamma^*_1}}\bigoplus
I_{\sD_{\Gamma^*_1}}\bigoplus\ldots (N=0),\\
\wt \cM_0=I_\sN\bigoplus\left(\bigoplus\limits_{n=1}^{N}{\bf
J}_{\Gamma_{2n-1}}\right)\bigoplus{\bf
J}^{(r)}_{\Gamma_{2N+1}}%\begin{bmatrix}\Gamma_{2N+1}&I_{\sD_{\Gamma^*_{2N+1}}}\end{bmatrix}
\bigoplus I_{\sD_{\Gamma^*_{2N+1}}}\bigoplus
I_{\sD_{\Gamma^*_{2N+1}}}\bigoplus\ldots (N\ge 1).
\end{array}
\]
Define $\cU_0=\cL_0\cM_0$ and $\wt \cU_0=\wt \cM_0\cL_0$.

If the operator $\Gamma_1$ is isometric, then
\[
\begin{array}{l}
\sH_0=
\begin{array}{l}\sD_{\Gamma_{0}}\\\oplus\\\sD_{\Gamma^*_{1}}\end{array}\bigoplus
\sD_{\Gamma^*_{1}}\bigoplus\sD_{\Gamma^*_{1}}\bigoplus\ldots\bigoplus\sD_{\Gamma^*_{1}}\bigoplus\ldots,\\
  \wt \sH_0=\sD_{\Gamma^*_0}\bigoplus\sD_{\Gamma^*_1}\bigoplus\sD_{\Gamma^*_1}\bigoplus\ldots
  \bigoplus\sD_{\Gamma^*_1}\bigoplus\ldots,
\end{array}
\]
{\footnotesize
\[
\begin{array}{l}
\cU_0=\begin{bmatrix}\Gamma_0&D_{\Gamma^*_0}\Gamma_1&
D_{\Gamma^*_0}&0&0&0&0&0&\ldots\cr
D_{\Gamma_0}&-\Gamma^*_0\Gamma_1&-\Gamma^*_0&0&0&0&0&0&\ldots\cr
0&0&0&I_{\sD_{\Gamma^*_1}}&0&0&0&0&\ldots\cr
0&0&0&0&I_{\sD_{\Gamma^*_1}}&0&0&0&\ldots\cr
%0&0&0&0&0&I_{\sD_{\Gamma^*_1}}&0&0&\ldots \cr
\vdots&\vdots&\vdots&\vdots&\vdots&\vdots&\vdots&\vdots&\vdots
\end{bmatrix},
\end{array}
\]
\[
\begin{array}{l}
\wt\cU_0=\begin{bmatrix}\Gamma_0&D_{\Gamma^*_0}&
0&0&0&0&0&0&\ldots\cr
\Gamma_1D_{\Gamma_0}&-\Gamma_1\Gamma^*_0&I_{\sD_{\Gamma^*_1}}&0&0&0&0&0&\ldots\cr
0&0&0&I_{\sD_{\Gamma^*_1}}&0&0&0&0&\ldots\cr
0&0&0&0&I_{\sD_{\Gamma^*_1}}&0&0&0&\ldots\cr
%0&0&0&0&0&I_{\sD_{\Gamma^*_1}}&0&0&\ldots \cr
\vdots&\vdots&\vdots&\vdots&\vdots&\vdots&\vdots&\vdots&\vdots
\end{bmatrix}.
\end{array}
\]}
%\end{example}
If $N\ge 1$, then truncated CMV matrices $\cT_0$ and $\wt\cT_0$ take
the form {\footnotesize
\[
\cT_0=\left[\begin{array}{c|c} {\cS_{N-1}}&\begin{array}{cccc}
0&0&0&\ldots\cr  \vdots&\vdots&\vdots&\vdots\cr 0&0&0&\ldots\cr
D_{\Gamma^*_{2N+1}}\Gamma_{2N+1}&D_{\Gamma^*_{2N}}&0&\ldots
\end{array}
\\
\hline
\begin{array}{cccccc}
0&\ldots&0&D_{\Gamma_{2N}}D_{\Gamma_{2N-1}}&-D_{\Gamma_{2N}}\Gamma^*_{2N-1}\cr
0&\ldots&0&0&0\cr \vdots&\vdots&\vdots&\vdots&\vdots
\end{array}&
%\hline%&
%\begin{array}{c|c}
%{\bf 0}&
\begin{array}{cccccc}
-\Gamma^*_{2N}\Gamma_{2N+1}&-\Gamma^*_{2N}&0&0&\ldots\cr
0&0&I_{\sD_{\Gamma^*_{2N+1}}}&0&\ldots\cr
0&0&0&I_{\sD_{\Gamma^*_{2N+1}}}&\ldots\cr
\vdots&\vdots&\vdots&\vdots&\vdots
\end{array}%&%\\
%& \mathbf{T}
\end{array}
\right],
\]
\[
\wt\cT_0=\left[\begin{array}{c|c} {\wt
\cS_{N-1}}&\begin{array}{cccc} 0&0&\ldots\cr
\vdots&\vdots&\vdots\cr 0&0&\ldots \cr
D_{\Gamma^*_{2N-1}}D_{\Gamma^*_{2N}}&0&\ldots\cr
-{\Gamma^*_{2N-1}}D_{\Gamma^*_{2N}}&0&\ldots%\cr
\end{array}
\\
\hline
\begin{array}{cccccc}
0&\ldots&0&\Gamma_{2N+1}D_{\Gamma_{2N}}\cr0&\ldots&0&0%\cr0&\ldots&0&0
\cr \vdots&\vdots&\vdots&\vdots
\end{array}&
%\hline%&
%\begin{array}{c|c}
%{\bf 0}&
\begin{array}{cccccc}
-\Gamma_{2N+1}\Gamma^*_{2N}&I_{\sD_{\Gamma^*_{2N+1}}}&0&0&0&\ldots\cr
0&0&I_{\sD_{\Gamma^*_{2N+1}}}&0&0&\ldots\cr
0&0&0&I_{\sD_{\Gamma^*_{2N+1}}}&0&\ldots\cr
\vdots&\vdots&\vdots&\vdots&\vdots&\vdots
\end{array}%&%\\
%& \mathbf{T}
\end{array}
\right].
\]
}

\subsection{$\Gamma_{2N+1}$ is co-isometric }  Then $\Gamma_{n}=0$, $\sD_{\Gamma_{n}}=\sD_{\Gamma_{2N+1}}$,
$D_{\Gamma_{n}}=I_{\sD_{\Gamma_{2N+1}}}$ for $n> 2N+1$. Define
\[
\begin{array}{l}
\sH_0=\sD_{\Gamma_0}\bigoplus\sD_{\Gamma_1}\bigoplus\sD_{\Gamma_1}\bigoplus\ldots,\;
\mbox{if}\;N=0,\\
\sH_0=\left(
\bigoplus\limits_{n=0}^{N-1}\begin{array}{l}\sD_{\Gamma_{2n}}\\\oplus\\\sD_{\Gamma^*_{2n+1}}\end{array}\right)\bigoplus
\sD_{\Gamma_{2N}}\bigoplus\sD_{\Gamma_{2N+1}}\bigoplus\ldots\bigoplus\sD_{\Gamma_{2N+1}}\bigoplus\ldots\;\mbox{if}\;
N\ge 1,
\end{array}
\]
\[
\begin{array}{l}
\wt\sH_0=
\sD_{\Gamma^*_0}\bigoplus\sD_{\Gamma_1}\bigoplus\sD_{\Gamma_1}\bigoplus\ldots,\;
\mbox{if}\;N=0, \\%WRONG\\
 \wt\sH_0=\left(
\bigoplus\limits_{n=0}^{N}\begin{array}{l}\sD_{\Gamma^*_{2n}}\\\oplus\\\sD_{\Gamma_{2n+1}}\end{array}\right)\bigoplus
\sD_{\Gamma_{2N+1}}\bigoplus\sD_{\Gamma_{2N+1}}\bigoplus\ldots\bigoplus\sD_{\Gamma_{2N+1}}\bigoplus\ldots\;\mbox{if}\;
N\ge 1.
\end{array}
\]
Define operators
\[
\begin{array}{l}
\cL_0={\bf J}_{D_{\Gamma_0}}\bigoplus
I_{\sD_{\Gamma_1}}\bigoplus I_{\sD_{\Gamma_1}}\bigoplus\ldots\;\mbox{if}\;N=0,\\
 \cL_0= {\bf
J}_{\Gamma_0}\bigoplus\left(\bigoplus\limits_{n=1}^{N}{\bf
J}_{\Gamma_{2n}}\right) \bigoplus I_{\sD_{\Gamma_{2N+1}}}\bigoplus
I_{\sD_{\Gamma_{2N+1}}}\bigoplus\ldots\;\mbox{if}\; N\ge 1\;:\sM\bigoplus\wt\sH_0\to\sN\bigoplus\sH_0,\\
\end{array}
\]
\[
\begin{array}{l}
\cM_0=I_\sM\bigoplus{\bf J}^{(c)}_{\Gamma_1}\bigoplus
I_{\sD_{\Gamma_1}}\bigoplus I_{\sD_{\Gamma_1}}\bigoplus\ldots\;\mbox{if}\;N=0, \\
 \cM_0=I_\sM\bigoplus\left(\bigoplus\limits_{n=1}^{N}{\bf
J}_{\Gamma_{2n-1}}\right)\bigoplus{\bf J}^{(c)}_{\Gamma_{2N+1}}%\begin{bmatrix}\Gamma_{2N+1}\crD_{\Gamma_{2N+1}} \end{bmatrix}
\bigoplus I_{\sD_{\Gamma_{2N+1}}} \bigoplus
%I_{\sD_{\Gamma_{2N+1}}}\bigoplus
\ldots\;\mbox{if}\; N\ge 1\; :\sM\bigoplus\sH_0\to
\sM\bigoplus\wt\sH_0,
\end{array}
\]
\[
\begin{array}{l}
\wt\cM_0=I_\sN\bigoplus{\bf J}^{(c)}_{\Gamma_1}\bigoplus
I_{\sD_{\Gamma_1}}\bigoplus I_{\sD_{\Gamma_1}}\bigoplus\ldots\;\mbox{if}\;N=0, \\
\wt\cM_0=I_\sN\bigoplus\left(\bigoplus\limits_{n=1}^{N}{\bf
J}_{\Gamma_{2n-1}}\right)\bigoplus{\bf J}^{(c)}_{\Gamma_{2N+1}}%\begin{bmatrix}\Gamma_{2N+1}\cr D_{\Gamma_{2N+1}}\end{bmatrix}\bigoplus I_{\sD_{\Gamma_{2N+1}}}
\bigoplus I_{\sD_{\Gamma_{2N+1}}}\bigoplus\ldots \;\mbox{if}\; N\ge
1\;:\sM\bigoplus\sH_0\to \sM\bigoplus\wt\sH_0.
\end{array}
\]
Define $\cU_0=\cL_0\cM_0$ and $\wt \cU_0=\wt \cM_0\cL_0$.

If $\Gamma_1$ is co-isometric, then {\footnotesize
\[
\cU_0=\begin{bmatrix}\Gamma_0&D_{\Gamma^*_0}\Gamma_1&0&0&0&\ldots\cr
D_{\Gamma_0}&-\Gamma^*_0\Gamma_1&0&0&0&\ldots\cr
 0&{D_{\Gamma_1}}&0&0&0&\ldots\cr
0&0&I_{\sD_{\Gamma_1}}&0&0&\ldots\cr
0&0&0&I_{\sD_{\Gamma_1}}&0&\ldots\cr
\vdots&\vdots&\vdots&\vdots&\vdots&\vdots
  \end{bmatrix},\;
 % \]
%\[
\wt\cU_0=\begin{bmatrix}\Gamma_0&D_{\Gamma^*_0}&0&0&0&\ldots\cr
\Gamma_1D_{\Gamma_0}&-\Gamma_1\Gamma^*_0&0&0&0&\ldots\cr
 D_{\Gamma_1}D_{\Gamma_0}&-{D_{\Gamma_1}}\Gamma^*_0&0&0&0&\ldots\cr
0&0&I_{\sD_{\Gamma_1}}&0&0&\ldots\cr
0&0&0&I_{\sD_{\Gamma_1}}&0&\ldots\cr
\vdots&\vdots&\vdots&\vdots&\vdots&\vdots
  \end{bmatrix}.
  \]
  }
If $N\ge 1$, then truncated CMV matrices $\cT_0$ and $\wt\cT_0$ in
this case take the form {\footnotesize

\[
\cT_0=\left[\begin{array}{c|c} {\cS_{N-1}}&\begin{array}{cccccc}
0&0&\ldots\cr  \vdots&\vdots&\vdots\cr 0&0&\ldots\cr
D_{\Gamma^*_{2N}}\Gamma_{2N+1}&0&\ldots
\end{array}
\\
\hline
\begin{array}{ccccc}
0&\ldots&0&D_{\Gamma_{2N}}D_{\Gamma_{2N-1}}&-D_{\Gamma_{2N}}\Gamma^*_{2N-1}\cr
0&\ldots&0&0&0\cr \vdots&\vdots&\vdots&\vdots&\vdots
\end{array}&
%\hline%&
%\begin{array}{c|c}
%{\bf 0}&
\begin{array}{cccccc}
-\Gamma^*_{2N}\Gamma_{2N+1}&0&0&\ldots\cr
D_{\Gamma_{2N+1}}&0&0&\ldots\cr
0&I_{\sD_{\Gamma^*_{2N+1}}}&0&\ldots\cr
\vdots&\vdots&\vdots&\vdots&\vdots
\end{array}%&%\\
%& \mathbf{T}
\end{array}
\right],
\]

\[
\wt \cT_0=\left[\begin{array}{c|c} {\wt
\cS_{N-1}}&\begin{array}{cccccc} 0&0&\ldots\cr
\vdots&\vdots&\vdots\cr 0&0&\ldots \cr
D_{\Gamma^*_{2N-1}}D_{\Gamma^*_{2N}}&0&\ldots\cr
-{\Gamma^*_{2N-1}}D_{\Gamma^*_{2N}}&0&\ldots
\end{array}
\\
\hline
\begin{array}{ccccc}
0&\ldots&0&\Gamma_{2N+1}D_{\Gamma_{2N}}\cr
0&\ldots&0&D_{\Gamma_{2N+1}}D_{\Gamma_{2N}}\cr 0&\ldots&0&0\cr
\vdots&\vdots&\vdots&\vdots
\end{array}&
%\hline%&
%\begin{array}{c|c}
%{\bf 0}&
\begin{array}{cccccc}
-\Gamma_{2N+1}\Gamma^*_{2N}&0&0&\ldots\cr
-D_{\Gamma_{2N+1}}\Gamma^*_{2N}&0&0&\ldots\cr
0&I_{\sD_{\Gamma^*_{2N+1}}}&0&0&\ldots\cr
0&0&I_{\sD_{\Gamma^*_{2N+1}}}&0&\ldots\cr
\vdots&\vdots&\vdots&\vdots&\vdots
\end{array}%&%\\
%& \mathbf{T}
\end{array}
\right].
\]
}

\subsection{$\Gamma_{2N}$ is unitary} In this case
\[
%\begin{array}{l}
\sH_0=
\bigoplus\limits_{n=0}^{N-1}\begin{array}{l}\sD_{\Gamma_{2n}}\\\oplus\\\sD_{\Gamma^*_{2n+1}}\end{array},\;%\\
\wt\sH_0=
\bigoplus\limits_{n=0}^{N-1}\begin{array}{l}\sD_{\Gamma^*_{2n}}\\\oplus\\\sD_{\Gamma_{2n+1}}\end{array},
%\end{array},
\]
\[
\cU_0=\left(\bJ_{\Gamma_0}\oplus\bJ_{\Gamma_2}\oplus\cdots\oplus
\bJ_{\Gamma_{2(N-1)}}\oplus\Gamma_{2N}\right)\times\left(I_{\sM}\oplus
\bJ_{\Gamma_1}\oplus\cdots\oplus\bJ_{\Gamma_{2N-1}}\right),
\]
\[
\wt \cU_0=\left(I_{\sN}\oplus
\bJ_{\Gamma_1}\oplus\cdots\oplus\bJ_{\Gamma_{2N-1}}\right)\times
\left(\bJ_{\Gamma_0}\oplus\bJ_{\Gamma_2}\oplus\cdots\oplus
\bJ_{\Gamma_{2(N-1)}}\oplus\Gamma_{2N}\right).
\]
 If $N\ge 1$, then
$$\cT_0= \cS_{N-1},\; \wt\cT_0= \wt \cS_{N-1}.$$
%\end{example}

\subsection {$\Gamma_{2N+1}$ is unitary} Then
\[
\begin{array}{l}
\sH_0=\sD_{\Gamma_0},\;\wt\sH_0=\sD_{\Gamma^*_0}\;\mbox{if}\;N=0,\\
 \sH_0=
\bigoplus\limits_{n=0}^{N-1}\begin{array}{l}\sD_{\Gamma_{2n}}\\\oplus\\\sD_{\Gamma^*_{2n+1}}\end{array}\bigoplus\sD_{\Gamma_{2N}},\;%\\
\wt\sH_0=
\bigoplus\limits_{n=0}^{N-1}\begin{array}{l}\sD_{\Gamma^*_{2n}}\\\oplus\\\sD_{\Gamma_{2n+1}}\end{array}\bigoplus\sD_{\Gamma^*_{2N}}\;\mbox{if}\;
N\ge 1
\end{array},
\]
\[
\cU_0=\left(\bJ_{\Gamma_0}\oplus\bJ_{\Gamma_2}\oplus\cdots\oplus
\bJ_{\Gamma_{2N}}\right)\times\left(I_{\sM}\oplus
\bJ_{\Gamma_1}\oplus\cdots\oplus\bJ_{\Gamma_{2N-1}}\oplus\Gamma_{2N+1}\right),
\]
\[
\wt \cU_0=\left(I_{\sN}\oplus
\bJ_{\Gamma_1}\oplus\cdots\oplus\bJ_{\Gamma_{2N-1}}\oplus\Gamma_{2N+1}\right)\times
\left(\bJ_{\Gamma_0}\oplus\bJ_{\Gamma_2}\oplus\cdots\oplus
\bJ_{\Gamma_{2N}}\right), \; N\ge 1.
\]
\[
\begin{array}{l}
\cU_0=\begin{bmatrix}\Gamma_0&D_{\Gamma^*_0}\cr
D_{\Gamma_0}&-\Gamma^*_0\end{bmatrix}\begin{bmatrix}I_\sM&0\cr
0&\Gamma_1
\end{bmatrix}=\begin{bmatrix}\Gamma_0&D_{\Gamma^*_0}\Gamma_1\cr D_{\Gamma_0}&-\Gamma^*_0\Gamma_1
\end{bmatrix},\\
\wt\cU_0=\begin{bmatrix}I_\sN&0\cr 0&\Gamma_1
\end{bmatrix}\begin{bmatrix}\Gamma_0&D_{\Gamma^*_0}\cr
D_{\Gamma_0}&-\Gamma^*_0\end{bmatrix}=\begin{bmatrix}\Gamma_0&D_{\Gamma^*_0}\cr
\Gamma_1D_{\Gamma_0}&-\Gamma_1\Gamma^*_0
\end{bmatrix} ,\;\mbox{if}\; N=0,
\end{array}
\]
 In this case if $N\ge 1$, then {\footnotesize
\[
\cT_0=\left[\begin{array}{c|c} \cS_{N-1} &
\begin{array}{cc}
0%\cr 0
\cr\vdots\cr 0\cr D_{\Gamma^*_{2N}}\Gamma_{2N+1}
\end{array}\\
\hline%&
\begin{array}{ccccc}
0&\ldots&0&D_{\Gamma_{2N}}D_{\Gamma_{2N-1}}&-D_{\Gamma_{2N}}{\Gamma^*_{2N-1}}
\end{array}&%\\
\begin{array}{cc}
-\Gamma^*_{2N}\Gamma^*_{2N+1}
\end{array}%& \mathbf{T}
\end{array}
\right],
\]
\[
\wt\cT_0=\left[\begin{array}{c|c} \wt \cS_{N-1} &
\begin{array}{cc}
0%\cr 0
\cr\vdots\cr 0\cr D_{\Gamma^*_{2N-1}}D_{\Gamma^*_{2N}}\cr
-\Gamma_{2N-1}D_{\Gamma^*_{2N}}
\end{array}\\
\hline%&
\begin{array}{ccccc}
0&\ldots&0&{\Gamma_{2N+1}}D_{\Gamma_{2N}}
\end{array}&%\\
\begin{array}{cc}
-\Gamma_{2N+1}\Gamma^*_{2N}
\end{array}%& \mathbf{T}
\end{array}
\right].
\]
}
 In particular if $N=1$ ($\Gamma_3$ is unitary), then {\footnotesize
\[
\cU_0=\begin{bmatrix}\Gamma_0&D_{\Gamma^*_0}\Gamma_1&
D_{\Gamma^*_0}D_{\Gamma^*_1}&0\cr
D_{\Gamma_0}&-\Gamma^*_0\Gamma_1&-\Gamma^*_0D_{\Gamma^*_1}&0\cr
0&\Gamma_2
D_{\Gamma_1}&-\Gamma_2\Gamma^*_1&D_{\Gamma^*_2}\Gamma_3\cr
0&D_{\Gamma_2}D_{\Gamma_1}&-D_{\Gamma_2}\Gamma^*_1&-\Gamma^*_2\Gamma_3
\end{bmatrix},\;\wt\cU_0=\begin{bmatrix}\Gamma_0&D_{\Gamma^*_0}&0&0&\cr
\Gamma_1D_{\Gamma_0}
&-\Gamma_1\Gamma^*_0&D_{\Gamma^*_1}\Gamma_2&D_{\Gamma^*_1}D_{\Gamma^*_2}\cr
D_{\Gamma_1}D_{\Gamma_0}&-D_{\Gamma_1}\Gamma^*_0&-\Gamma^*_1\Gamma_2&-\Gamma^*_1D_{\Gamma^*_2}\cr
0&0&\Gamma_3 D_{\Gamma_2}&-\Gamma_3\Gamma^*_2
\end{bmatrix}.
\]
}


\begin{thebibliography}{AHS} %\small
\bibitem{Alpay}
D.~Alpay, \textit{Algorithme de Schur, espaces a noyau reproduisant
et theorie des systemes}. (French) Panoramas et Syntheses. 6. Paris:
Societe Mathematique de France. viii, 189 p., 1998.

\bibitem{And}
W.N.~Anderson, \textit{Shorted operators.} SIAM J. Appl. Math.,
\textbf{20} (1971), 520--525.

\bibitem{AT}
W.N.~Anderson and G.E.~Trapp, \textit{Shorted operators, II.}  SIAM
J. Appl. Math., \textbf{28} (1975), 60--71.


%\bibitem{ARL}
%Yu.M.~Arlinski\u{\i}, \textit{Extremal extensions of a
%$C(\alpha)$-suboperator and their representations.} Oper.  Theory
%Adv. Appl., \textbf{162} (2006), 47--69.

\bibitem{ARL1}
Yu.~Arlinski\u{\i}, \textit{The Kalman--Yakubovich--Popov inequality
for passive discrete time-invariant systems.} Operators and Matrices
\textbf{2} (2008), No.1, 15--51.

\bibitem{OAM2009}
Yu.~Arlinski\u{\i}, \textit{Conservative realizations of the
functions associated with Schur's algorithm for the Schur class
operator-valued functions.} Operators and Matrices, \textbf{3}
(2009), No.~1, 59--96.

\bibitem{ArlMFAT2009}
Yu.~Arlinski\u{\i}, \textit{Conservative discrete time-invariant
systems and block operator CMV matrices.} Methods of Functional
Analysis and Topology, \textbf{15} (2009), No.~3, 201--236.

\bibitem{ArlIEOT2011}
Yu.~Arlinski\u{\i}, \textit{Schur parameters, Toeplitz matrices, and
Kre\u{\i}n shorted operators.} Integral Equat. Oper. Theory,
\textbf{71} (2011), 417--453.

%\bibitem{AHS1}Yu.M.~Arlinski\u{\i}, S.~Hassi, and H.S.V.~de~Snoo,
%\textit{Parametrization of contractive block-operator matrices and
%passive disrete-time systems.} Complex Analysis and Operator Theory,
%\textbf{1} (2007), No.2, 211--233.

\bibitem{AGT}
Yu.~Arlinski\u{\i}, L.~Golinski\u{\i}, and E.~Tsekanovski\u{\i},
\textit{Contractions with rank one defect operators and truncated
CMV matrices}, Journ. of Funct. Anal., \textbf{254} (2008) 154--195.

\bibitem{MathNach_2012}
Yu.~Arlinski\u\i, \textit{Truncated Hamburger moment problem for an
operator measure with compact support}, Mathematische Nachrichten,
285 (2012), No.14--15, 1677-–1695.


\bibitem{A}
 D.Z.~Arov, \textit{Passive linear stationary dynamical
systems.} Sibirsk. Math. Journ. \textbf{20} (1979), No.2., 211-228
[Russian]. English translation in Siberian Math. Journ., \textbf{20}
(1979), 149--162.

%\bibitem{Arov} D.Z.~Arov, \textit{Stable dissipative linear stationary dynamical
%scattering systems.} J. Operator Theory, \textbf{1} (1979), 95--126
%[Russian]. English translation in Interpolation Theory, Systems,
%Theory and Related Topics. The Harry Dym Anniversary Volume, Oper.
%Theory: Adv. Appl. \textbf{134} (2002), Birkhauser Verlag.

\bibitem{Arov1995}
D.Z.~Arov, \textit{Computation of the resolvent matrix for the
generalized bitanential Schur and Caratheodory-Nevanlinna-Pick
interpolation problems in the strictly completely inderminate case}.
Integral Equations Operator Theory, \textbf{22} (1995), 253--272.

\bibitem{ArGr}
D.Z.~Arov and L.Z.~Grossman, \textit{Scattering matrices in the
theory of extensions of isometric operators}, Dokl. Akad. Nauk SSSR
\textbf{270} (1983), 17--20 [Russian]. English trans., Soviet Math.
Dokl. \textbf{27} (1983), 518--522.

\bibitem{ArovKrein}
D.Z.~Arov and M.G.~Kre\u\i n, \textit{On the computation of entropy
functionals and their minima in indeterminate extension problems.}
Acta Sci. Math, \textbf{45} (1983), 33--50. (Russian)

\bibitem{BC}
M.~Bakonyi and T.~Constantinescu, \textit{Schur's algorithm and
several applications}, Pitman Research Notes in Mathematics Series,
v. 261, Longman Scientific and Technical, 1992.

\bibitem{BD}
Yu.M.~Berezansky and M.~Dudkin. \textit{The direct and inverse
spectral problems for the block Jacobi matrices}, Methods of
Functional Analysis and Toplogy \textbf{11} (2005), no. 4, 327--345.

\bibitem{CMV1}
M.J.~Cantero, L.~Moral, and L.~Vel\'azquez, \textit{Five-diagonal
matrices and zeros of orthogonal polynomials on the unit circle},
Linear Algebra Appl. \textbf{362} (2003), 29--56.




\bibitem{CF}
Z.~Ceausescu, C.~Foias, \textit{On intertwining dilations. V.} Acta
Sci. Math. (Szeged), \textbf{40} (1978), 9--32; Corrections,
\textbf{41} (1979).



\bibitem{Const}
T.~Constantinscu, \textit{Operator Schur algorithm and associated
functions.} Math. Balcanica \textbf{2} (1988), 244-252.

\bibitem{Const2}
T.~Constantinscu, \textit{Schur parameters, factorization and
dilation problems.} Operator Theory: Advances and Applications,
\textbf{82}. Bikh\"{a}user Verlag, Basel, 1996.



\bibitem{DGK}
Ph.~Delsarte, Y.~Genin, and Y.~Kamp, \textit{I.Schur parametrization
of positive definite block-Toeplitz systems.}  SIAM J. Appl. Math.
\textbf{36} (1979), No. 1, 34--46.



\bibitem{DFK}
V.K.~Dubovoj, B.~Fritzsche, and B.~Kirstein, \textit{Matricial
version of the classical Schur problem.} Teubner-Texte zur
Mathematik [Teubner Texts in Mathematics], 129. B.G.~Teubner
Verlagsgesellschaft mbH, Stuttgart, 1992. 355 pp.

\bibitem{Dym}
H.~Dym, \textit{$J$ contractive matrix functions, reproducing kernel
Hilbert spaces and interpolation}, Reg. Conf. Ser. Math. 71, 147 p.
(1989).

\bibitem{FoFr}
C.~Foias and A.E.~Frazho, \textit{The commutant lifting approach to
interpolation problems}, Operator Theory: Advances and Applications,
v. 44, Birkh\"{a}user Basel-Boston-Berlin, 1990.

\bibitem{FoFrGoKa}
C.~Foias, A.E.~Frazho, I.~Gohberg, and M.A.~Kaashoek, \textit{Metric
constrained interpolation, commutant liffting and systems}, Operator
Theory: Advances and Applications, v. 100, Birkh\"{a}user
Basel-Boston-Berlin, 1998.


\bibitem{FrKirLas}
B.~Fritzsche, B.~Kirstein, and A.~Lasarow \textit{The matricial
Schur problem in both nondegenerate and degenerate cases.}
Math.Nachr. \textbf{282} (2009), No.2, 180--210.

\bibitem{KKhYu1987}
V.E.~Katsnel'son, A.Ja.~Kheifets, and P.M.~Yuditski\u\i, \textit{An
abstract interpolation problem and the theory of extensions of
isometric operators} in Operators in Fuction Spaces and Problem in
Function Theory [Russian], Naukova Dumka, Kiev, 1987, 83--96.
English translation in Operator Theory: Advances and Appl., 95,
1997, 283--298.
\bibitem{Kh3}
 A.Ya.~Kheifets, \textit{Parseval equality in abstract interpolation problem and coupling of open
systems}, Teor. Funktsi\u\i,  Funktsional. Anal. i Prilozhen.
\textbf{49} (1988), 112--120; and \textbf{50} (1988), 98--103. [in
Russian]. English translation in J. Soviet Math. \textbf{49} (1990),
4, 1114--1120; and \textbf{49}(1990), 6, 1307--1310.
\bibitem{Kh2}
A.~Kheifets, \textit{The abstract interpolation problem and
applications}, Holomorphic spaces. Math. Sci. Res. Inst. Publ.,
\textbf{33} (1998), 351--379. Cambridge University Press, Cambridge.
\bibitem{Kh}
A.~Kheifets, \textit{Parametrization of solutions of the Nehari
problem and nonorthogonal dynamics.} Oper. Theory, Adv. Appl.
\textbf{115} (2000), 213--233.




\bibitem{Kr}
 M.G.~Kre\u{\i}n, \textit{Theory of selfadjoint extensions of semibounded operators
and its applications.I.}  Mat. Sb., \textbf{20} (1947), No.3,
431-498. [Russian]




\bibitem{Peller}
V.~Peller, \textit{Hankel operators and their applications},
Springer Monographs in Mathematics. Springer- Verlag, New
York--Berlin--Heidelberg, 2003.


\bibitem{Schur} I.~Schur, \textit{\"Uber Potenzreihen, die im Innern des
Einheitskreises beschr\"ankt sind, I, II.}  J. Reine Angew. Math.,
\textbf{147}(1917), 205--232; \textbf{148}(1918), 122--145. English
translation in \textit{I. Schur methods in operator theory and
signal processing}, Oper. Theory Adv. Appl., 18, Birkhauser, Basel
and Boston, 1986.



\bibitem{Shmul1}
Yu.L.~Shmul'yan, \textit{Generalized fractional-linear
transformations of operator balls.} Sibirsk. Mat. Zh. \textbf{21}
(1980), No.5, 114--131 [Russian]. English translation in Siberian
Mathematical Jour. \textbf{21} (1980), No.5, 728--740.

\bibitem{Shmul2}
Yu.L.~Shmul'yan, \textit{Certain stability properties for analytic
operator-valued functions.} Mat. Zametki \textbf{20} (1976), No.4,
511--520 [Russian]. English translation in Mathematical Notes,
\textbf{20} (1976), No.4, 843--848.

\bibitem{Si1}
B.~ Simon, \textit{Orthogonal polynomials on the unit circle. Part
1. Classical theory, Part 2. Spectral theory.} American Mathematical
Society Colloquium Publications, Providence, RI, 2005.

\bibitem{SF}
B.~Sz.-Nagy and C.~Foias, \textit{Harmonic analysis of operators on
Hilbert space}, North-Holland, New York, 1970.
\end{thebibliography}
\end{document}